\documentclass[10pt,a4paper]{article}
\pdfoutput=1 

\usepackage{lineno,hyperref}
\modulolinenumbers[5]

\usepackage[english]{babel}
\usepackage{graphicx}
\usepackage{marginnote}

\usepackage{tikz}
\usetikzlibrary{patterns}
\usepackage{bbm}
\usetikzlibrary{arrows}
\usepackage{mathtools}
 \usepackage{multirow}
\usepackage{amssymb}
\usepackage{amsfonts}
\usepackage{amsthm}
\usepackage{subfig}
\usepackage[scr]{rsfso} 
\usepackage{algorithm}     
\usepackage{algpseudocode} 

\newcommand{\email}[1]{\hspace*{\stretch{1}}\emph{\texttt{#1}}}

\makeatletter
\def\blfootnote{\xdef\@thefnmark{$\star$}\@footnotetext}
\makeatother
\newenvironment{Authors}%
  {\begin{center}\begin{bfseries}}%
  {\end{bfseries}\end{center}}
\newenvironment{Addresses}%
  {\begin{flushleft}\begin{itshape}}%
  {\end{itshape}\end{flushleft}}

 \usepackage{fancyhdr}  
  
\fancypagestyle{plain}{
	\fancyhead{}
	\fancyhead[C]{\hfill submitted to SIAM Journal on Scientific Computing (SISC), June 2019}
}
  
  

\newtheorem{theorem}{Theorem}[section]
\newtheorem{theorem1}{Theorem}[section]
\newtheorem{theorem2}{Theorem}[section]
\newtheorem{proposition}[theorem]{Proposition}
\newtheorem{lemma}[theorem1]{Lemma}
\newtheorem{remark}[theorem2]{Remark}
\newtheorem{corollary}[theorem]{Corollary}

  \newcommand{\vertiii}[1]{{\left\vert\kern-0.25ex\left\vert\kern-0.25ex\left\vert #1 
    \right\vert\kern-0.25ex\right\vert\kern-0.25ex\right\vert}}


\begin{document}

\thispagestyle{plain}

\title{A registration method for model order reduction: data compression and geometry reduction}
 \date{}
 
 \maketitle
\vspace{-50pt} 
 
\begin{Authors}
Tommaso Taddei$^{1}$
\end{Authors}

\begin{Addresses}
$^1$
IMB, UMR 5251, Univ. Bordeaux;  33400, Talence, France.
Inria Bordeaux Sud-Ouest, Team MEMPHIS;  33400, Talence, France, \email{tommaso.taddei@inria.fr} \\
\end{Addresses}

\begin{abstract}
 We propose a general --- i.e., independent of the underlying equation --- registration method for parameterized model order reduction. 
{
Given the spatial domain $\Omega \subset \mathbb{R}^d$ and the manifold
$\mathcal{M}_{\rm u}= \{ u_{\mu} : \mu \in \mathcal{P}  \}$ associated with the parameter domain $\mathcal{P} \subset \mathbb{R}^P$ and the parametric field
$\mu \mapsto u_{\mu} \in   L^2(\Omega)$,
the algorithm takes as input a  set of snapshots $\{ u^k \}_{k=1}^{n_{\rm train}} \subset \mathcal{M}_{\rm u}$ and returns a parameter-dependent bijective mapping 
$\boldsymbol{\Phi}: \Omega \times \mathcal{P} \to \mathbb{R}^d$:
the mapping is designed to make the mapped manifold $\{ u_{\mu} \circ \boldsymbol{\Phi}_{\mu}: \, \mu \in \mathcal{P} \}$ more suited for linear compression methods. }
We apply the registration procedure, in combination with a linear compression method, to devise low-dimensional representations of solution manifolds with slowly-decaying Kolmogorov $N$-widths; we also consider the application to problems in parameterized geometries. We present a  theoretical result to show the  mathematical rigor of  the registration procedure. We further present numerical results for several two-dimensional problems, to empirically demonstrate the effectivity of our proposal.
\end{abstract}

\emph{Keywords:} 
 parameterized partial differential equations; model order
reduction; data compression; geometry registration.

\section{Introduction}
\label{sec:intro}
 
 \subsection{Background}
Numerical simulations based on mathematical models represent a valuable tool to study complex phenomena of scientific interest and/or industrial value. For several applications --- including design and optimization, uncertainty quantification, active control --- it is important to approximate the solution (and associated quantities of interest) to the model over a range of parameters: parameters might correspond to material properties, geometric features, operating conditions.
To alleviate the computational burden associated with the evaluation of the model for many values of the parameters, parameterized  {m}odel {o}rder {r}eduction (pMOR) techniques aim to generate a {r}educed-{o}rder {m}odel (ROM) that approximates the original system over a prescribed parameter range. In this work, we shall 
develop a \emph{general} --- i.e., independent of the underlying equation --- registration procedure for pMOR applications; we shall here focus on systems modelled by stationary 
partial differential  equations (PDEs).
{In computer vision and pattern recognition, registration refers to the process of finding a spatial transformation that aligns two datasets; in this paper, registration refers to the process of finding a parametric transformation that improves the linear compressibility of a given parametric manifold.
}

We denote by $\mu$  the set of parameters associated with the model in the prescribed parameter domain $\mathcal{P} \subset \mathbb{R}^P$; we denote by $\Omega \subset \mathbb{R}^d$  ($d=2,3$) the spatial domain, which is assumed to be Lipschitz; we also introduce the Hilbert space $\mathcal{X}$  defined over $\Omega$, endowed with the inner product $(\cdot, \cdot)$   and the induced norm  $\|  \cdot \|:= \sqrt{(\cdot, \cdot)}$.
Then, we introduce the problem:
\begin{equation}
\label{eq:standard_PMOR_setting}
{\rm find} \; \mathbf{y}_{\mu}:= \mathfrak{s}_{\mu} (z_{\mu}) \;\;
{\rm s.t. } \; \;
\mathcal{G}_{\mu}(z_{\mu}, v) = 0 \; \;  \forall \, v \in   \mathcal{X},
\end{equation}
where $\mathbf{y}_{\mu}$ is a vector of $D$ quantities of interest,
$\mathfrak{s}_{\mu}: \mathcal{X} \to \mathbb{R}^D$ is a continuous functional, and 
$\mathcal{G}_{\mu}(z_{\mu}, \cdot) = 0$ admits a unique solution for all $\mu \in \mathcal{P}$. 
We denote by $\mathcal{M}:= \{ z_{\mu}: \, \mu \in \mathcal{P}  \} \subset \mathcal{X}$ the solution manifold associated with the parametric problem, 
 and we denote by $\widehat{z}_{\mu}$ the approximation to $z_{\mu}$ provided by a given ROM, for $\mu \in \mathcal{P}$.

A pMOR technique for \eqref{eq:standard_PMOR_setting}
relies on three building blocks:
(i) a data compression strategy for the construction of a low-dimensional approximation space of the solution manifold $\mathcal{M}$;
(ii) a reduced-order statement  for the rapid and reliable prediction of the solution and associated quantities of interest for any value of the parameter; and
 (iii) an \emph{a posteriori} error estimation procedure for certification.
 pMOR strategies rely on an \emph{offline/online} computational decomposition: during the offline stage, which is computationally expensive and performed once, a ROM for \eqref{eq:standard_PMOR_setting}  is generated by exploiting several high-fidelity (finite element, finite volume,...) solutions to the mathematical model for (properly-chosen) parameter values; during the online stage, which is inexpensive and performed for any new parameter  value  $\mu$, the solution to the ROM is computed to rapidly obtain predictions of $z_{\mu}$ and associated quantities of interest.
In the past few decades, many authors have developed pMOR strategies for a broad class of problems: we refer to the surveys \cite{benner2015survey,hesthaven2016certified,quarteroni2015reduced}
for thorough introductions to pMOR. 

In \eqref{eq:standard_PMOR_setting}, we assume that the domain $\Omega$ is fixed (parameter-independent); however, for several applications, the problem of interest is of the form:
\begin{equation}
\label{eq:standard_PMOR_setting_param_domain}
{\rm find} \; \mathbf{y}_{\mu} := \mathfrak{s}_{\mu} (z_{\mu}) \;\;
{\rm s.t. } \; \;
\mathcal{G}_{\mu}(z_{\mu}, v) = 0 \; \;  \forall \, v \in   \mathcal{X}_{\mu},
\end{equation}
where $\mathcal{X}_{\mu}$ is a suitable Hilbert space defined over the parameter-dependent domain $\Omega_{\mu} \subset \mathbb{R}^d$. 
{Given $\mu \in \mathcal{P}$, we remark that the solution field $z_{\mu}$ is defined over the parameter-dependent domain $\Omega_{\mu}$: since pMOR procedures rely on the definition of a low-dimensional approximation  defined over a parameter-independent domain, 
they cannot be directly applied to \eqref{eq:standard_PMOR_setting_param_domain}.
 We should thus recast  the problem in a fixed domain.}

Parametric mappings are used in pMOR to recast problem \eqref{eq:standard_PMOR_setting} or \eqref{eq:standard_PMOR_setting_param_domain} as
\begin{equation}
\label{eq:mapped_PMOR_setting}
{\rm find} \; \mathbf{y}_{\mu} = \mathfrak{s}_{\mu,\Phi} (\tilde{z}_{\mu}) \;\;
{\rm s.t. } \; \;
\mathcal{G}_{\mu,\Phi}(\tilde{z}_{\mu}, v) = 0 \; \;  \forall \, v \in   \mathcal{X},
\end{equation}
where $\mathfrak{s}_{\mu,\Phi} (w ) :=
\mathfrak{s}_{\mu} (w\circ \boldsymbol{\Phi}_{\mu}^{-1} ) $,
and $\mathcal{G}_{\mu,\Phi}(w, v) :=
\mathcal{G}_{\mu}(w\circ \boldsymbol{\Phi}_{\mu}^{-1}, v\circ \boldsymbol{\Phi}_{\mu}^{-1})$ for all $w,v \in \mathcal{X}$. 
We denote by 
 $\widetilde{\mathcal{M}}:=  \{ \tilde{z}_{\mu} = z_{\mu} \circ \boldsymbol{\Phi}_{\mu}: \, \mu \in \mathcal{P}   \}$ the mapped solution manifold.
 For 
\eqref{eq:standard_PMOR_setting}, we require that 
$\boldsymbol{\Phi}_{\mu}$ is a bijection from $\Omega$ into itself, for all $\mu \in \mathcal{P}$; furthermore, we require that 
$v \circ \boldsymbol{\Phi}_{\mu}^{-1} \in \mathcal{X}$ for all 
$v \in \mathcal{X}$ ---{if $\mathcal{X}$ is either $L^2(\Omega)$ or $H^1(\Omega)$, it suffices to require that $\boldsymbol{\Phi}$ and its inverse are Lipschitz continuous for all $\mu \in \mathcal{P}$.}
For  \eqref{eq:standard_PMOR_setting_param_domain}, 
we require that 
$\boldsymbol{\Phi}_{\mu}$ is a bijection from a reference domain ${\Omega}$ into $\Omega_{\mu}$ and that 
$\mathcal{G}_{\mu,\Phi}$ is well-posed in
$\mathcal{X} := \{ v \circ \boldsymbol{\Phi}_{\mu}: \, v \in \mathcal{X}_{\mu} \}$.
{As discussed in section 
\ref{sec:data_compression_intro}, the use of parameterized mappings for problems of the form
\eqref{eq:standard_PMOR_setting} 
 is motivated by approximation considerations and is not strictly necessary; on the other hand, the use of mappings for problems of the form \eqref{eq:standard_PMOR_setting_param_domain} 
serves to restate the mathematical problem in a parameter-independent spatial domain, and is thus essential for the application of pMOR procedures.
}

Problem-dependent mappings are broadly used in applied mathematics to compress 
information and ultimately simplify the solution to PDEs. 
$r$-adaptivity  (\cite{zahr2018optimization}) is used in combination with discontinuous Galerkin  (DG) discretization methods to approximate discontinuous solutions to hyperbolic PDEs; similarly, problem-dependent scaling factors --- in effect, mappings --- are used to determine self-similar (approximate) solutions to PDEs, including the boundary layer equations (see, e.g., \cite[Chapter 16.4]{panton2013incompressible}).  
Furthermore, problem-dependent mappings are employed 
to devise high-fidelity solvers for problems in deforming domains (\cite{de2007mesh,persson2009discontinuous}). 
We describe below the two classes of pMOR tasks considered in this work.

\subsection{Data compression of problems with slowly-decaying Kolmogorov widths}
\label{sec:data_compression_intro}
Most approaches to data compression for
\eqref{eq:standard_PMOR_setting}
rely on linear approximation spaces $\mathcal{Z}_N= {\rm span}\{ \zeta_n \}_{n=1}^N \subset \mathcal{X}$ to approximate the manifold $\mathcal{M}$: we refer to this class of methods as to \emph{linear approximation (or compression) methods}; given $\mathcal{Z}_N$, we denote by $Z_N: \mathbb{R}^N \to \mathcal{Z}_N$ the linear parameter-independent operator such that 
$Z_N \boldsymbol{\alpha}:= \sum_{n=1}^N \alpha_n \,  \zeta_n$, for all
$\boldsymbol{\alpha} \in \mathbb{R}^N$. Two well-known linear approximation methods are the Proper Orthogonal Decomposition (POD,
\cite{berkooz1993proper,sirovich1987turbulence,volkwein2011model}) and the weak-Greedy algorithm (\cite[section 7.2.2]{rozza2007reduced}).
Success of linear methods relies on the availability of low-dimensional accurate approximation spaces for the solution manifold $\mathcal{M}$. The Kolmogorov $N$-width (\cite{pinkus2012n}) provides a rigorous measure of the linear reducibility  of $\mathcal{M}$: given $N>0$, the Kolmogorov $N$-width $d_N(\mathcal{M})$ is given by
\begin{equation}
\label{eq:kolmogorov_nwidth}
d_N(\mathcal{M}):=  \inf_{\mathcal{Z}_N \subset \mathcal{X}, \; \; {\rm dim} (\mathcal{Z}_N) = N} \; \; 
\sup_{w \in \mathcal{M}} \, \|  w  - \Pi_{\mathcal{Z}_N} w \|,
\end{equation}
where the infimum is taken over all $N$-dimensional spaces, and 
$ \Pi_{\mathcal{Z}_N}: \mathcal{X} \to \mathcal{Z}_N$ denotes the orthogonal projection operator onto $\mathcal{Z}_N \subset \mathcal{X}$. 
In certain engineering-relevant cases, it is possible to demonstrate that 
the decay of $d_N(\mathcal{M})$ with $N$ is extremely rapid:
we refer to  \cite[section 8.1]{rozza2007reduced},
\cite[Theorem 3.3]{babuska2011optimal}  and
\cite{cohen2015approximation,cohen2015kolmogorov} for further details.
Recalling the (quasi-)optimality properties of POD and Greedy algorithms (see \cite{volkwein2011model} and
\cite{cohen2015approximation}, respectively), rapid-decaying Kolmogorov widths justify  the use of linear methods.

However, linear approximation methods are fundamentally ill-suited for several classes of relevant engineering problems.
As observed in \cite[Example 3.5]{haasdonk2013convergence} and 
\cite[Example 2.5]{taddei2015reduced}, solution fields with parameter-dependent jump discontinuities --- or alternatively boundary/internal layers --- cannot be adequately approximated through a low-dimensional linear expansion; similarly, linear methods are   inappropriate to deal with problems with  discontinuous parameter-dependent coefficients. Motivated by these considerations, several authors have proposed \emph{nonlinear approximation/compression methods} to deal with these problems: we here distinguish between \emph{Eulerian} and \emph{Lagrangian} approaches.

Eulerian approaches consider approximations of the form $\widehat{z}_{\mu}:= Z_{N,\mu} (\widehat{\boldsymbol{\alpha}}_{\mu} )$, where
$Z_{N,\mu}: \mathbb{R}^N \to \mathcal{X}$ is a suitably-chosen operator which might depend on the parameter $\mu$ and might also be nonlinear in 
$\boldsymbol{\alpha}$. Eulerian techniques might rely on 
Grassmannian learning \cite{amsallem2008interpolation,zimmermann2018geometric},
convolutional auto-encoders \cite{kashima2016nonlinear,lee2018model},
transported/transformed snapshot methods
\cite{cagniart2019model,nair2018transported,reiss2018shifted,welper2017interpolation},
 displacement interpolation \cite{rim2018model}, to determine the operator $Z_{N,\mu}$. Alternatively, they might exploit adaptive local-in-parameter and/or local-in-space enrichment  strategies
\cite{ballani2018component,bergmann2018zonal,carlberg2015adaptive,
efendiev2016online,eftang2013approximation,etter2018online,ohlberger2015error,
peherstorfer2018model}. Note that, since $Z_{N,\mu}$ is nonlinear and/or   depends on parameter, it might be difficult to develop rapid and reliable procedures for the online computations of the coefficients $\widehat{\boldsymbol{\alpha}}_{\mu}$.

Lagrangian approaches propose to exploit a linear method 
$\widetilde{Z}_N: \mathbb{R}^N \to \widetilde{\mathcal{Z}}_N \subset \mathcal{X}$ 
to approximate the mapped solution  $\widetilde{z}_{\mu}:= z_{\mu} \circ \boldsymbol{\Phi}_{\mu}$ where $\boldsymbol{\Phi}_{\mu} : \Omega \to \Omega$ is a suitably-chosen bijection from $\Omega$ into itself: the mapping $\boldsymbol{\Phi}_{\mu}$ should be chosen to make the mapped solution manifold $\widetilde{\mathcal{M}}$ more amenable for linear approximations. Examples of Lagrangian approaches have been proposed in\footnote{
The method of freezing proposed in \cite{ohlberger2013nonlinear} 
aims at decomposing the solution into a group component and  a shape component. The approach reduces to a Lagrangian approach if the group action is induced by a mapping of the underlying spatial domain.
} 
\cite{iollo2014advection,mojgani2017arbitrary,ohlberger2013nonlinear,taddei2015reduced}:
in \cite{ohlberger2013nonlinear,taddei2015reduced} the construction of the map is performed separately from the construction of the solution coefficients, while in 
\cite{mojgani2017arbitrary} the authors propose to build the mapping $\boldsymbol{\Phi}_{\mu}$ and the solution coefficients $\widehat{\boldsymbol{\alpha}}_{\mu}$ simultaneously. 
Simultaneous learning of mapping and solution coefficients leads to a  nonlinear and non-convex optimization problem even for linear PDEs.
Note that
{any  Lagrangian method is equivalent to an Eulerian method with 
$Z_{N,\mu} (\boldsymbol{\alpha}) := \sum_{n=1}^N \alpha_n \, \tilde{\zeta}_n\circ \boldsymbol{\Phi}_{\mu}^{-1}$, while the converse is not true.}
On the other hand,  the application of pMOR techniques to the mapped problem \eqref{eq:mapped_PMOR_setting} is completely standard: this is in contrast with Eulerian approaches, which might require more involved strategies for the computation of the solution coefficients $\widehat{\boldsymbol{\alpha}}_{\mu}$
(see \cite{lee2018model}).

\subsection{Reduction of problems in parameterized domains}
Given the family of parameterized domains $\{ \Omega_{\mu}: \mu \in \mathcal{P} \}$, we shall here identify a 
reference domain $\Omega$ and a bijective mapping $\boldsymbol{\Phi}$ such that
$\Omega_{\mu}= \boldsymbol{\Phi}_{\mu}({\Omega
})$ for all $\mu \in \mathcal{P}$; the mapping should be computable for new values of $\mu \in \mathcal{P}$, with limited computational and memory resources. 
Several authors have developed geometry reduction techniques based on automatic piecewise-affine maps (\cite{rozza2007reduced}), Radial Basis Functions (RBFs, \cite{manzoni2012model}), transfinite maps
(\cite{lovgren2006reduced,iapichino2012reduced}) and solid extension (\cite{manzoni2017efficient}). While the approach in \cite{rozza2007reduced} is restricted to a specific class of parametric deformations, the other approaches
--- which exploit ideas originally developed in the framework of mesh deformation  and surface interpolation/approximation -- 
are broadly applicable.
Despite the many recent contributions to the field, development of rapid and reliable geometry reduction techniques for large deformations is still a challenging task.

For applications to biological systems, parametric descriptions of the boundary $\partial \Omega_{\mu}$ for all $\mu$ are unavailable. For this reason,
in order to find the mapping $\boldsymbol{\Phi}$, 
 it is important to implement strategies to systematically determine parametric descriptions of $\partial \Omega_{\mu}$.
This problem, which is shared by many of the geometry reduction approaches mentioned above, is not addressed  in the present paper.

\subsection{Contributions and outline of the paper}

We propose a registration procedure that takes as input a set of snapshots $\{u^k = u_{\mu^k}   \}_{k=1}^{n_{\rm train}} \subset L^2(\Omega)$ with $\mu^1,\ldots, \mu^{n_{\rm train}} \in \mathcal{P}$, and returns the  parametric mapping  $\boldsymbol{\Phi}: \Omega
\times \mathcal{P} \to \mathbb{R}^d$.
The key features of the approach are (i) a nonlinear non-convex  optimization statement that aims at reducing the difference (in a suitable metric) between a properly-chosen reference field $\bar{u}= u_{\bar{\mu}}$ and the mapped field $\tilde{u}_{\mu^k}= u_{\mu^k} \circ \boldsymbol{\Phi}_{\mu^k}$ for $\mu^1,\ldots,\mu^{n_{\rm train}}$,  and 
(ii) a generalization procedure based on kernel  regression  that aims at determining a mapping for all $\mu \in \mathcal{P}$ based on the available data for $\{ \mu^k \}_{k=1}^{n_{\rm train}}$.

For nonlinear data compression, we apply the registration procedure to the solution itself ($u_{\mu}=z_{\mu}$); for geometry reduction, we 
{apply the registration procedure  to determine a mapping from $\Omega$ to $\Omega_{\mu}$, for all $\mu \in \mathcal{P}$.
} To demonstrate the generality of the approach for  data compression, we consider the application to a boundary layer problem and  to an advection-reaction problem with a parameter-dependent sharp gradient region; on the other hand, we consider the application to two diffusion problems to demonstrate the effectivity of our proposal to geometry reduction.

We observe that the present paper is related to a number of prior works. First, as the Eulerian approach in 
\cite{lee2018model}, our registration algorithm is independent of the underlying equation: for this reason, we believe that the approach can be applied to a broad class of problems in science and engineering. Second, 
our optimization statement for the construction of 
$\boldsymbol{\Phi}$
is related to the recent proposal by Zahr and Persson for $r$-adaptivity in the DG framework (\cite{zahr2018optimization}).
Third, the optimization statement is tightly linked to optimal transport  (\cite{villani2008optimal}), which has been employed in \cite{iollo2014advection} to devise a nonlinear approximation method: in section \ref{remark:optimal_transport}, we discuss similarities and differences between the mappings obtained using our method and optimal transport maps.
{
Fourth, the optimization statement shares also similarities with the method of slices considered in \cite{rowley2000reconstruction,mowlavi2018model}. In particular, the reference field $\bar{u}$ plays the role of the \emph{template} in the above mentioned papers. The key difference is that the method of slices --- similarly to 
\cite{ohlberger2013nonlinear}  --- 
   is associated with problems with symmetries (and more in general with  problems that are invariant under a given group action) and  exploits specific features of the physical system; on the other hand, our approach is exclusively driven by approximation considerations and does not require any particular structure to the elements of the parametric manifold.}

The outline of the paper is as follows. 
{ In section \ref{sec:theory_stuff}, we present some necessary mathematical background that will guide us in the definition of the methodology. }  Then, in sections \ref{sec:data_compression} and \ref{sec:geometry_reduction}, we discuss the development of the registration procedure for data compression and geometry reduction; for each task, we investigate performance  through the vehicle of two model problems. Finally, in section \ref{sec:conclusions}, we provide a short summary and we discuss several potential next steps.

\subsection{Notation}

By way of preliminaries, we introduce notation used throughout the paper. Given the parametric mapping $\boldsymbol{\Phi}$, we denote by $\mathbf{X}$  (resp. $\mathbf{x}$) a generic point in the reference (resp. physical) configuration;  we use notation
$\widehat{\nabla}= [\widehat{\partial}_1,\ldots, \widehat{\partial}_d]^T$ and
$ {\nabla}= [ {\partial}_1,\ldots,  {\partial}_d]^T$
to refer to reference and physical gradients and we recall that
 $\nabla = \left(\widehat{\nabla} \boldsymbol{\Phi}_{\mu} \right)^{-T} \, \widehat{\nabla}$;
 we denote by $\mathfrak{J}_{\mu}= {\rm det} \left( \widehat{\nabla} \, \boldsymbol{\Phi}_{\mu} \right)$ the Jacobian determinant; 
  finally, given the field $g \in L^2$, we define  the corresponding mapped field
 $\tilde{g}_{\mu}:=g \circ \boldsymbol{\Phi}_{\mu}$.

We   denote by $\mathbf{e}_1,\ldots,\mathbf{e}_N$ the canonical basis in $\mathbb{R}^N$ and by $\|  \cdot \|_2$ the Euclidean norm. 
Given the open set $A \subset \mathbb{R}^d$, we define 
  the corresponding indicator function $\mathbbm{1}_A: \mathbb{R}^d \to \{0,1\}$, the   distance  function ${\rm dist}(\mathbf{x}, A) = \inf_{\mathbf{y} \in A} \, \| \mathbf{x} - \mathbf{y} \|_2$, 
the closure   $\overline{A}:= A \cup \partial A$, and given the function 
  $w: \Omega  \to \mathbb{R}^q$, $q \geq 1$ and $A \subset \Omega$, we define 
   the image of $w$ in $A$  as $w(A) = \{ w(\mathbf{x}) : \, \mathbf{x} \in A \} \subset \mathbb{R}^q$. 
Given $\delta>0$, we  denote by $ A_{\delta}$ the $\delta$-neighboorhood of $A$,
$A_{\delta} := \{  \mathbf{x} \in \mathbb{R}^d:  {\rm dist}(\mathbf{x}, A) < \delta \}$.
Furthermore,  we say that the differentiable function
$\mathbf{H}: A \to \mathbb{R}^d$ is a diffeomorphism  if it is a bijection and its inverse  is differentiable;
 given the open set $B \subset \mathbb{R}^d$, we say that  $A$ is diffeomorphic to $B$ (and we use notation $A \simeq B$) if there exists a diffeomorphism $\mathbf{H}$  such that $\mathbf{H}(A)=B$, and we say that $A$ is compactly embedded in $B$ (and we use notation $A \Subset B$) if $\overline{A}$  is contained in $B$.
Finally, we introduce the Hausdorff distance   ${\rm dist}_{\rm H} (A, B) := \max \left\{
\sup_{\mathbf{x} \in A} {\rm dist}(\mathbf{x}, B), \, 
\sup_{\mathbf{y} \in B} {\rm dist}(\mathbf{y}, A)
\right\}$.
  
{Numerical results rely on  a high-fidelity continuous or discontinuous
Galerkin    finite element (FE) discretization.
We denote by $\{ \texttt{D}^k  \}_{k=1}^{n_{\rm el}}$ the elements of the mesh; we denote by $N_{\rm hf}$ the number of degrees of freedom; and 
we denote by $\{  \mathbf{x}_q^{\rm qd}  \}_{q=1}^{N_{\rm q}}$ the quadrature points.
}

 In this work, we resort to POD to generate low-dimensional linear approximation spaces; we use the method of snapshots (\cite{sirovich1987turbulence}) to compute POD eigenvalues and eigenvectors. Given the snapshot set $\{ u^k \}_{k=1}^{n_{\rm train}} \subset \mathcal{M}_{\rm u}$ and the inner product $(\cdot, \cdot)$, we define the Gramian matrix
$\mathbf{C} \in \mathbb{R}^{n_{\rm train}, n_{\rm train}}$,
$\mathbf{C}_{k,k'}= (u^k, u^{k'})$, 
and we define the POD eigenpairs    
$\{(\lambda_n, \zeta_n)  \}_{n=1}^{n_{\rm train}}$
as
$$
\mathbf{C} \boldsymbol{\zeta}_n = \lambda_n  \, \boldsymbol{\zeta}_n,
\quad
\zeta_n:=   \sum_{k=1}^{n_{\rm train}} \, \left(   \boldsymbol{\zeta}_n  \right)_k \, u^k,
\quad
n=1,\ldots,n_{\rm train},
$$
with $\lambda_1 \geq \lambda_2 \geq \ldots \lambda_{n_{\rm train}}=0$. In our implementation, we orthonormalize the modes, that is
$\| \zeta_n \|= 1$ for $n=1,\ldots,n_{\rm train}$.
To stress dependence of the POD space on the choice of the inner product, we use notation $L^2$-POD if $(\cdot, \cdot ) = (\cdot, \cdot)_{L^2(\Omega)}$, $H^1$-POD if 
$(\cdot, \cdot ) = (\cdot, \cdot)_{H^1(\Omega)}$, and
 $\|  \cdot \|_2$-POD if $(\cdot, \cdot)$ is the Euclidean inner product. Finally, we shall choose the size $N$ of the POD space based on the criterion
\begin{equation}
\label{eq:POD_cardinality_selection}
N := \min \left\{
N': \, \sum_{n=1}^{N'} \lambda_n \geq  \left(1 - tol_{\rm pod} \right) 
\sum_{i=1}^{n_{\rm train}} \lambda_i
\right\},
\end{equation} 
where $tol_{\rm pod} > 0$ is a given tolerance.

\section{Affine mappings: theoretical preliminaries}
\label{sec:theory_stuff}
 
Given the diffeomorphic open sets $U$ and $V$ in $\mathbb{R}^d$, and the target tolerance $\texttt{tol} \geq 0$, we are interested in determining $\boldsymbol{\Phi}: U \to \mathbb{R}^d$ such that
\begin{equation}
\label{eq:bijective_map}
{\rm dist}_{\rm H} (\boldsymbol{\Phi}(U), V) \leq \texttt{tol}, \quad
 \boldsymbol{\Phi}: U \to \boldsymbol{\Phi}(U) \; {\rm is \; bijective}.
\end{equation} 
In data compression (cf. section \ref{sec:data_compression}), we have $U=V=\Omega$, while in geometry reduction (cf.  section  \ref{sec:geometry_reduction}) $U=\Omega$ and $V=\Omega_{\mu}$. In this paper, we shall seek mappings of the form
\begin{equation} 
\label{eq:prescribed_mapping_form}
  \boldsymbol{\Psi}_{\mathbf{a}}^{\rm hf} (\mathbf{X}) :=
\mathbf{X} + \sum_{m=1}^{M_{\rm hf}} \, a_m \boldsymbol{\varphi}_m^{\rm hf}(\mathbf{X}),
\end{equation}
where $\{ \boldsymbol{\varphi}_m^{\rm hf} \}_{m=1}^{M_{\rm hf}} \subset {\rm Lip} (U, \mathbb{R}^d )$.
In   view of the discussion, we further define the Jacobian
   $\mathfrak{J}_{\mathbf{a}}^{\rm hf}
:= {\rm det} \left( \widehat{\nabla} \boldsymbol{\Psi}_{\mathbf{a}}^{\rm hf} \right)$.

In order to devise an effective computational methodology to determine problem-dependent parameterized mappings, we shall  discuss two issues: (i) the choice of the search space ${\rm span}\{ \boldsymbol{\varphi}_m^{\rm hf} \}_{m=1}^{M_{\rm hf}} $, and (ii) the derivation of an actionable condition for the coefficients $\mathbf{a}= \left[a_1, \ldots, a_{M_{\rm hf}}\right]^T$
to enforce \eqref{eq:bijective_map}. 

\subsection{Main result}
\label{sec:theory}

In Proposition \ref{th:mapping_general},  we state the main result of the paper.
Proof of Proposition   \ref{th:mapping_general}  is provided in section \ref{sec:proof_technical}. 

\begin{proposition}
\label{th:mapping_general}
Let $U, V \subset \mathbb{R}^d$ satisfy
\begin{equation}
\label{eq:hyp_domain}
U = \boldsymbol{\Upsilon}(\widehat{\Omega}), \; \;
V = \boldsymbol{\Lambda}(\widehat{\Omega}), \qquad
 \widehat{\Omega} = \{\mathbf{x} \in \mathbb{R}^d: \, f(\mathbf{x}) < 0   \}, \quad
{\rm where} \, f: \mathbb{R}^d \to \mathbb{R}
\; {\rm is \; convex,}
\end{equation}
{and $\boldsymbol{\Upsilon}$ (resp. $\boldsymbol{\Lambda}$) is a diffemorphism  from $\widehat{\Omega}$ to $U$ (resp. $V$) that has a $C^1$ extension to $\mathbb{R}^d$.}
Given $\delta>0$, let $\boldsymbol{\Phi}:  U_{\delta} \to \mathbb{R}^d$ be a vector-valued function that satisfies 
\begin{enumerate}
\item[(i)]
$\boldsymbol{\Phi} \in C^1 \left(  U_{\delta}; \mathbb{R}^d \right)$;
\item[(ii)]
$\mathfrak{J} (\mathbf{X}) = {\rm det} \left( \widehat{\nabla} \boldsymbol{\Phi}(\mathbf{X}) \right)   \geq \epsilon >0$  for all $\mathbf{X} \in U$ and a given  $\epsilon>0$;
\item[(iii)]
${\rm dist} \left(\boldsymbol{\Phi}(\mathbf{X}), \,  \partial V   \right) = 0$  for all $\mathbf{X} \in \partial U$ (i.e., $\boldsymbol{\Phi}(\partial  U)  \subseteq \partial V $).
\end{enumerate} 
Then, $\boldsymbol{\Phi}$ is a bijection that maps  $U$ into $V$.
\end{proposition}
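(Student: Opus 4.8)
The plan is to read \eqref{eq:bijective_map} as a purely topological statement about a proper, orientation-preserving local diffeomorphism, and to reduce the claimed global bijectivity to covering-space theory. First I would record the qualitative consequences of the hypotheses. Since $f$ is convex, $\widehat{\Omega}=\{f<0\}$ is convex, hence connected and simply connected; because $\boldsymbol{\Upsilon}$ and $\boldsymbol{\Lambda}$ are diffeomorphisms onto $U$ and $V$ with $C^1$ extensions to $\mathbb{R}^d$, the sets $U$ and $V$ are open and connected, $V$ is simply connected, and (assuming, as in the applications, that $\widehat{\Omega}$ is bounded) $\overline{U}=\boldsymbol{\Upsilon}(\overline{\widehat{\Omega}})$ and $\overline{V}$ are compact. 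By (i)--(ii) the Jacobian $\mathfrak{J}$ is continuous on $U_{\delta}$ and bounded below by $\epsilon$ on $U$, hence, by density and continuity, $\mathfrak{J}\geq \epsilon$ on all of $\overline{U}$; the inverse function theorem then makes $\boldsymbol{\Phi}$ an orientation-preserving local diffeomorphism on an open neighborhood of $\overline{U}$. In particular $\boldsymbol{\Phi}|_{U}$ is an open map.

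The decisive step is to show that the image cannot escape through a fold, i.e. $\boldsymbol{\Phi}(U)\subseteq \overline{V}$, and then $\subseteq V$. I would argue by maximizing the continuous function $x\mapsto {\rm dist}\big(\boldsymbol{\Phi}(x),\overline{V}\big)$ over the compact set $\overline{U}$. Suppose the maximum were strictly positive and attained at $x^{\star}$. It cannot be attained on $\partial U$, since (iii) forces $\boldsymbol{\Phi}(\partial U)\subseteq \partial V\subseteq \overline{V}$, where the distance vanishes; hence $x^{\star}\in U$. But ${\rm dist}(\cdot,\overline{V})$ has no interior local maximum on $\mathbb{R}^d\setminus\overline{V}$ (moving radially away from a nearest point of $\overline{V}$ strictly increases it), while $\boldsymbol{\Phi}(U)$ is open around $\boldsymbol{\Phi}(x^{\star})$, so some nearby image point would have strictly larger distance --- a contradiction. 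Therefore $\boldsymbol{\Phi}(U)\subseteq \overline{V}$; since $\boldsymbol{\Phi}(U)$ is open while no point of $\partial V$ is interior to $\overline{V}$, we conclude $\boldsymbol{\Phi}(U)\cap\partial V=\emptyset$ and thus $\boldsymbol{\Phi}(U)\subseteq V$.

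With the image confined to $V$, I would finish via properness. For any compact $K\subset V$, the set $\boldsymbol{\Phi}^{-1}(K)$ is closed in the compact set $\overline{U}$ and, by (iii), disjoint from $\partial U$ (points of $\partial U$ map into $\partial V$, which misses $K$); hence it is a compact subset of $U$, so $\boldsymbol{\Phi}:U\to V$ is proper. A proper local homeomorphism onto a connected manifold is a covering map; since $U$ is connected and $V$ is simply connected, this covering must be single-sheeted, i.e. $\boldsymbol{\Phi}:U\to V$ is a homeomorphism and in particular the required bijection.

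The part I expect to require the most care is the ``no-fold'' step $\boldsymbol{\Phi}(U)\subseteq V$: the hypotheses (ii)--(iii) only control the sign of the Jacobian and the boundary image, and a priori an orientation-preserving local diffeomorphism could cover part of the exterior of $V$ while still sending $\partial U$ into $\partial V$. The distance-maximization argument, relying on the open-mapping property together with compactness, is what rules this out. A secondary technical point is the standing boundedness of $\widehat{\Omega}$ (and hence compactness of $\overline{U},\overline{V}$), which is implicit in the PDE setting but would need to be stated explicitly.
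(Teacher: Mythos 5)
Your overall architecture (a ``no-fold'' step, then properness, then a covering-space argument) parallels the paper's proof, and your final steps are sound: properness follows exactly as you say, and a proper local homeomorphism onto the connected, simply connected $V$ is a homeomorphism --- this is Hadamard's global inverse function theorem (Theorem \ref{th:hadamard}), which the paper also invokes to conclude. The genuine gap is in the no-fold step, which is precisely where the structural hypothesis \eqref{eq:hyp_domain} must enter, and your argument never uses it beyond simple connectedness. The lemma you rely on --- that ${\rm dist}(\cdot,\overline{V})$ has no local maximum on $\mathbb{R}^d\setminus\overline{V}$ because ``moving radially away from a nearest point of $\overline{V}$ strictly increases it'' --- is false for non-convex $\overline{V}$: moving away from one nearest point increases the distance to \emph{that point}, not to the set, since other parts of $\overline{V}$ may come closer. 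And $V=\boldsymbol{\Lambda}(\widehat{\Omega})$ is only a diffeomorphic image of a convex set, so it can fail convexity badly. Concretely, for $d=2$ take $V$ a thin thickened Archimedean spiral winding twice around a point $\mathbf{p}\notin\overline{V}$ (the image of a rectangle under a polar-type map that is $C^1$ on all of $\mathbb{R}^2$); for $d=3$ take a thickened spherical shell with a thin radial plug removed. In both cases ${\rm dist}(\cdot,\overline{V})$ has a \emph{strict local maximum} at $\mathbf{p}$, so the contradiction you want at $\boldsymbol{\Phi}(x^{\star})$ need not materialize: every point of the open image of a neighborhood of $x^{\star}$ can be at most as far from $\overline{V}$ as $\boldsymbol{\Phi}(x^{\star})$ is.

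The repair is to replace the Euclidean distance by a function with a genuine maximum principle, and this is exactly what the paper does: it first conjugates, setting $\widehat{\boldsymbol{\Phi}}=\boldsymbol{\Lambda}^{-1}\circ\boldsymbol{\Phi}\circ\boldsymbol{\Upsilon}$, to reduce to the case $U=V=\widehat{\Omega}$, and then maximizes $g=f\circ\boldsymbol{\Phi}$ over $\overline{U}$. Since $f$ is convex, it admits no interior local maximum (away from sets where it is locally constant), while hypothesis (ii) and the inverse function theorem would force $\boldsymbol{\Phi}(\mathbf{X}^{\star})$ to be such a local maximum of $f$ if the maximizer $\mathbf{X}^{\star}$ were interior; hence the maximizer lies on $\partial U$, where (iii) gives $g\leq 0$, i.e. $\boldsymbol{\Phi}(\overline{U})\subseteq\overline{V}$. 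In other words, the role your distance function was meant to play is played by the convex defining function $f$ pulled through the diffeomorphisms --- the one ingredient of \eqref{eq:hyp_domain} your proof never touches. (A topological-degree argument, using that ${\rm deg}(\boldsymbol{\Phi},U,\mathbf{y})$ vanishes on the unbounded component of $\mathbb{R}^d\setminus\boldsymbol{\Phi}(\partial U)$ while a positive Jacobian forces positive degree at image points, is another possible route to confinement of the image, but it is a different tool from the one you invoke.) Your closing remark that boundedness of $\widehat{\Omega}$ needs to be assumed explicitly is correct; the paper uses it implicitly as well.
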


 We observe that the (unit) ball and the (unit) hyper-cube satisfy  
\eqref{eq:hyp_domain}: we have indeed that
$\mathcal{B}_1(\mathbf{0})= \{\mathbf{x} \in \mathbb{R}^d: \, f(\mathbf{x})=\| \mathbf{x}\|_2^2 - 1 < 0   \}$, and
$(0,1)^d = \{\mathbf{x} \in \mathbb{R}^d: \, f(\mathbf{x})=2 \| \mathbf{x}  - [1/2,1/2] \|_{\infty} - 1 < 0   \}$. 
We further remark  that 
 $\partial \widehat{\Omega} = \{ \mathbf{x}\in \mathbb{R}^d: f(\mathbf{x})=0  \}$: this implies that 
any $U$ satisfying \eqref{eq:hyp_domain}    is simply connected with connected boundary. Note that, 
{recalling standard extension theorems in analysis,    any simply connected 
 two-dimensional smooth domain}     satisfies  \eqref{eq:hyp_domain}; the latter result is in general false for $d=3$.

Corollary \ref{th:3D_simply_mapping} illustrates an extension of Proposition \ref{th:mapping_general} to a broader class of domains.
The result is stated for $U=V=\Omega$. The extension to the more general case
$U \neq V$ is here omitted.

\begin{corollary}
\label{th:3D_simply_mapping}
Let $\Omega = \Omega^{\rm out} \setminus \bigcup_{i=1}^Q \Omega_i^{\rm in}$
Let   $\Omega^{\rm out}, \Omega_1^{\rm in},\ldots, \Omega_Q^{\rm in} \subset \mathbb{R}^d$ satisfy the following assumptions.
\begin{enumerate}
\item[(i)]
$\Omega_1^{\rm in},\ldots, \Omega_Q^{\rm in}, \Omega^{\rm out}$ satisfy  \eqref{eq:hyp_domain}.
\item[(ii)]
$\Omega_1^{\rm in},\ldots, \Omega_Q^{\rm in}$ are pairwise disjoint, and $ \Omega_1^{\rm in},\ldots, \Omega_Q^{\rm in} \Subset  
\Omega^{\rm out}$. 
\end{enumerate} 
Let  
 $\boldsymbol{\Phi}:  {\Omega}_{\delta}^{\rm out} \to  \mathbb{R}^d$ be a function such that
\begin{enumerate} 
\item[(i)]
$\boldsymbol{\Phi} \in C^1 \left(  {\Omega}_{\delta}^{\rm out}; \mathbb{R}^d \right)$;
\item[(ii)]
$ \mathfrak{J}(\mathbf{X})   \geq \epsilon >0$  for all $\mathbf{X} \in {\Omega}^{\rm out}$ and a given  $\epsilon>0$;
\item[(iii)]
$\boldsymbol{\Phi}(\partial  {\Omega}^{\rm out})  \subseteq \partial \Omega^{\rm out} $,
$\boldsymbol{\Phi}(\partial  {\Omega}_i^{\rm in})  \subseteq \partial \Omega_i^{\rm in} $ for $i=1,\ldots,Q$.
\end{enumerate} 
Then, $\boldsymbol{\Phi}$ is a bijection that maps $\Omega$ into itself.
\end{corollary}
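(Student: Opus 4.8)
The plan is to reduce the corollary to repeated applications of Proposition \ref{th:mapping_general} and then to glue the resulting local bijections together using the global injectivity obtained on $\Omega^{\rm out}$. First I would apply Proposition \ref{th:mapping_general} with $U=V=\Omega^{\rm out}$: by hypothesis (i) the set $\Omega^{\rm out}$ satisfies \eqref{eq:hyp_domain}, while $\boldsymbol{\Phi} \in C^1(\Omega_{\delta}^{\rm out}; \mathbb{R}^d)$, $\mathfrak{J} \geq \epsilon$ on $\Omega^{\rm out}$, and $\boldsymbol{\Phi}(\partial \Omega^{\rm out}) \subseteq \partial \Omega^{\rm out}$ are exactly conditions (i)--(iii) of the proposition. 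This yields that $\boldsymbol{\Phi}$ restricts to a bijection of $\Omega^{\rm out}$ onto itself; in particular $\boldsymbol{\Phi}$ is injective on the whole of $\Omega^{\rm out}$, a fact I will use repeatedly.

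Next I would apply the same proposition with $U=V=\Omega_i^{\rm in}$, for each $i=1,\ldots,Q$. Since $\Omega_i^{\rm in} \Subset \Omega^{\rm out} \subseteq \Omega_{\delta}^{\rm out}$, the map $\boldsymbol{\Phi}$ is $C^1$ on a full neighborhood $\left( \Omega_i^{\rm in} \right)_{\delta'}$ of $\overline{\Omega_i^{\rm in}}$; hypothesis (i) gives \eqref{eq:hyp_domain} for $\Omega_i^{\rm in}$, the Jacobian bound holds on $\Omega_i^{\rm in} \subseteq \Omega^{\rm out}$, and $\boldsymbol{\Phi}(\partial \Omega_i^{\rm in}) \subseteq \partial \Omega_i^{\rm in}$ by hypothesis (iii). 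Hence $\boldsymbol{\Phi}$ restricts to a bijection of $\Omega_i^{\rm in}$ onto itself. A short compactness argument then promotes this to $\boldsymbol{\Phi}(\overline{\Omega_i^{\rm in}}) = \overline{\Omega_i^{\rm in}}$: the continuous image of the compact set $\overline{\Omega_i^{\rm in}}$ is closed, it contains $\boldsymbol{\Phi}(\Omega_i^{\rm in}) = \Omega_i^{\rm in}$ and therefore $\overline{\Omega_i^{\rm in}}$, while being contained in $\Omega_i^{\rm in} \cup \partial \Omega_i^{\rm in} = \overline{\Omega_i^{\rm in}}$ by (iii).

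Finally I would glue the pieces, writing $\Omega = \Omega^{\rm out} \setminus \bigcup_{i=1}^Q \overline{\Omega_i^{\rm in}}$. Injectivity of $\boldsymbol{\Phi}$ on $\Omega$ is inherited from injectivity on $\Omega^{\rm out} \supseteq \Omega$. To show $\boldsymbol{\Phi}(\Omega) \subseteq \Omega$, take $\mathbf{X} \in \Omega$: then $\boldsymbol{\Phi}(\mathbf{X}) \in \Omega^{\rm out}$, and if $\boldsymbol{\Phi}(\mathbf{X}) \in \overline{\Omega_i^{\rm in}} = \boldsymbol{\Phi}(\overline{\Omega_i^{\rm in}})$ for some $i$, there would exist $\mathbf{X}' \in \overline{\Omega_i^{\rm in}} \subset \Omega^{\rm out}$ with $\boldsymbol{\Phi}(\mathbf{X}')=\boldsymbol{\Phi}(\mathbf{X})$ and $\mathbf{X}' \neq \mathbf{X}$ (as $\mathbf{X} \notin \overline{\Omega_i^{\rm in}}$), contradicting injectivity on $\Omega^{\rm out}$. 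Surjectivity onto $\Omega$ is symmetric: each $\mathbf{y} \in \Omega \subseteq \Omega^{\rm out}$ admits a unique preimage $\mathbf{X} \in \Omega^{\rm out}$, and $\mathbf{X} \in \overline{\Omega_i^{\rm in}}$ would force $\mathbf{y} = \boldsymbol{\Phi}(\mathbf{X}) \in \overline{\Omega_i^{\rm in}}$, which is impossible.

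I expect the main obstacle to lie not in any single deep step but in the careful bookkeeping of closures and boundaries: the crux is the invariance identity $\boldsymbol{\Phi}(\overline{\Omega_i^{\rm in}}) = \overline{\Omega_i^{\rm in}}$, which, combined with the global injectivity from the outer application, forbids points of $\Omega$ from landing in any closed hole and conversely. One should also verify at the outset that Proposition \ref{th:mapping_general} genuinely applies on each hole, i.e.\ that $\boldsymbol{\Phi}$ is $C^1$ on an entire neighborhood of $\overline{\Omega_i^{\rm in}}$ --- which is exactly what the compact embedding $\Omega_i^{\rm in} \Subset \Omega^{\rm out}$ provides.
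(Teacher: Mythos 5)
Your proof is correct and follows essentially the same route as the paper's: apply Proposition \ref{th:mapping_general} to $\Omega^{\rm out}$ and to each hole $\Omega_i^{\rm in}$, then use injectivity of $\boldsymbol{\Phi}$ on $\Omega^{\rm out}$ to glue --- the paper phrases this last step as the set identity $\boldsymbol{\Phi}\bigl(\Omega^{\rm out} \setminus \bigcup_{i} \Omega_i^{\rm in}\bigr) = \boldsymbol{\Phi}\bigl(\Omega^{\rm out}\bigr) \setminus \bigcup_{i} \boldsymbol{\Phi}\bigl(\Omega_i^{\rm in}\bigr)$, valid for injective maps, where you argue pointwise. Your extra bookkeeping (the $C^1$ neighborhood of each $\overline{\Omega_i^{\rm in}}$ supplied by the compact embedding, and the closure invariance $\boldsymbol{\Phi}(\overline{\Omega_i^{\rm in}}) = \overline{\Omega_i^{\rm in}}$) only makes explicit details the paper leaves implicit.
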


\begin{proof}
We first observe that, by construction, we have $\partial \Omega = \partial \Omega^{\rm out} \cup  \bigcup_{i=1}^Q  \partial \Omega_i^{\rm in}$.
Exploiting Proposition  \ref{th:mapping_general}, we find that $\boldsymbol{\Phi}$ is a bijection that maps
$\Omega^{\rm out}, \Omega_1^{\rm in},\ldots,\Omega_Q^{\rm in}$ into themselves.
Then, we observe that
$$
\boldsymbol{\Phi}(\Omega) =   \boldsymbol{\Phi}\left(
 \Omega^{\rm out} \setminus \bigcup_{i=1}^Q \Omega_i^{\rm in}
\right)
=
\boldsymbol{\Phi} \left(  \Omega^{\rm out} \right) 
\setminus \bigcup_{i=1}^Q
\boldsymbol{\Phi} \left(  \Omega_i^{\rm in} \right) 
=
\Omega^{\rm out} \setminus \bigcup_{i=1}^Q \Omega_i^{\rm in}
= \Omega,
$$
{where the second identity follows from the fact that $\boldsymbol{\Phi}$ is a bijection from $\Omega^{\rm out}$ into itself and 
$\boldsymbol{\Phi}(\Omega_i^{\rm in})= \Omega_{i}^{\rm in}$ for $i=1,\ldots, Q$.
}
Thesis follows.
\end{proof}

\subsection{Implications for $U=V= (0,1)^d$}
\label{sec:choice_family}
Next result, which is a straightforward consequence of Proposition \ref{th:mapping_general}, provides indications for the choice of $\boldsymbol{\varphi}_1^{\rm hf},\ldots,
\boldsymbol{\varphi}_{M_{\rm hf}}^{\rm hf}$ and of the coefficients $\mathbf{a}$ for $U=V=(0,1)^d$.
 
\begin{proposition}
\label{th:application_unit_square}
Let  $d=2$ or $d=3$ and let
$\boldsymbol{\varphi}_1^{\rm hf},\ldots,
\boldsymbol{\varphi}_{M_{\rm hf}}^{\rm hf}$ in \eqref{eq:prescribed_mapping_form}  be of class 
$C^1(\mathbb{R}^d; \, \mathbb{R}^d)$ and 
 satisfy
 \begin{subequations}
 \label{eq:family_mappings}
\begin{equation}
\label{eq:boundary_conditions_mapping}
\boldsymbol{\varphi}_m^{\rm hf}(\mathbf{X}) \cdot \mathbf{e}_i \, = \, 0,
\quad
{\rm on} \; \{ \mathbf{X}: \; X_i=0, {\rm or} \, X_i=1\},
\quad
\left\{
\begin{array}{l}
m=1,\ldots, M_{\rm hf};
\\ 
i=1,\ldots, d. 
\\
\end{array}
\right.
\end{equation}
 \end{subequations}
Then, for any $\bar{\mathbf{a}} \in \mathbb{R}^{M_{\rm hf}}$,
$\boldsymbol{\Phi}  = \boldsymbol{\Psi}_{ \bar{\mathbf{a}}}^{\rm hf}$ is bijective from the unit square (or cube) $\Omega=(0,1)^d$ into itself if   
\begin{equation}
\label{eq:condition_equivalent}
\min_{\mathbf{X} \in \overline{\Omega}} \,  
\,{\rm det} \left(
\widehat{\nabla} \boldsymbol{\Phi}(\mathbf{X}) \right) \, > \, 0.
\end{equation}

Furthermore,
for any $\epsilon \in (0,1)$, there exists a ball
$B =\mathcal{B}_{r_{\epsilon}}(\mathbf{0})$
 of radius $r_{\epsilon}>0$ centered in $\mathbf{0}$ such that
 $\inf_{\mathbf{X} \in  {\Omega} , \mathbf{a} \in B}  
 \mathfrak{J}_{\mathbf{a}}^{\rm hf}(\mathbf{X})
 \geq \epsilon $. 
\end{proposition}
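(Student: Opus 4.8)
The plan is to derive the first claim directly from Proposition \ref{th:mapping_general} applied with $U=V=\Omega=(0,1)^d$, $\widehat{\Omega}=\Omega$, and $\boldsymbol{\Upsilon}=\boldsymbol{\Lambda}={\rm id}$; as recorded in the remark following Proposition \ref{th:mapping_general}, the cube satisfies \eqref{eq:hyp_domain} with the convex function $f(\mathbf{X})=2\|\mathbf{X}-[1/2,\ldots,1/2]\|_{\infty}-1$, so only the three conditions on $\boldsymbol{\Phi}=\boldsymbol{\Psi}_{\bar{\mathbf{a}}}^{\rm hf}$ remain to be verified. Condition (i) is immediate, since each $\boldsymbol{\varphi}_m^{\rm hf}\in C^1(\mathbb{R}^d;\mathbb{R}^d)$ makes $\boldsymbol{\Phi}$ of class $C^1$ on all of $\mathbb{R}^d$, hence on any neighbourhood $\Omega_{\delta}$. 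For condition (ii) I would invoke compactness: $\mathbf{X}\mapsto {\rm det}(\widehat{\nabla}\boldsymbol{\Phi}(\mathbf{X}))$ is continuous and $\overline{\Omega}$ is compact, so the positive infimum guaranteed by \eqref{eq:condition_equivalent} is attained and furnishes the constant $\epsilon>0$ required in (ii).

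The substance of the argument --- and the step I expect to be the main obstacle --- is condition (iii), namely $\boldsymbol{\Phi}(\partial\Omega)\subseteq\partial\Omega$. The boundary conditions \eqref{eq:boundary_conditions_mapping} only give the easy half: on the face $\{X_i=0\}$ (resp. $\{X_i=1\}$) the $i$-th component satisfies $\boldsymbol{\Phi}(\mathbf{X})\cdot\mathbf{e}_i=X_i+\sum_m \bar{a}_m\,\boldsymbol{\varphi}_m^{\rm hf}(\mathbf{X})\cdot\mathbf{e}_i=X_i$, so each face is carried into the corresponding bounding hyperplane. What is not automatic, and must be proved, is that the tangential image remains inside $\overline{\Omega}$, so that the face lands in $\partial\Omega$ rather than merely in the hyperplane.

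To close this gap I would exploit a factorisation of the Jacobian on the faces. Since $\boldsymbol{\Phi}_i\equiv X_i$ is constant in the tangential directions on $\{X_i=0\}$, the off-diagonal entries of the $i$-th row of $\widehat{\nabla}\boldsymbol{\Phi}$ vanish there, whence ${\rm det}(\widehat{\nabla}\boldsymbol{\Phi})=(\partial_i\boldsymbol{\Phi}_i)\cdot{\rm det}\,M$ on the face, with $M$ the $(d-1)\times(d-1)$ tangential Jacobian of the restricted map. Positivity \eqref{eq:condition_equivalent} then forces $\partial_i\boldsymbol{\Phi}_i\neq0$ on the connected face, so it keeps a constant sign; evaluating at a vertex, where the same factorisation along the incident edge combined with a critical-point argument (a vanishing $\partial_i\boldsymbol{\Phi}_i$ at an interior edge point would make ${\rm det}(\widehat{\nabla}\boldsymbol{\Phi})=0$ there) shows the $1$-D edge map between the fixed corner values $0$ and $1$ is strictly monotone, giving $\partial_i\boldsymbol{\Phi}_i>0$ at the vertex and hence on the whole face. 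Consequently ${\rm det}\,M>0$ on the face, so the tangential restriction is itself a mapping of the $(d-1)$-cube of the form \eqref{eq:prescribed_mapping_form} satisfying \eqref{eq:boundary_conditions_mapping} and \eqref{eq:condition_equivalent} in dimension $d-1$. Applying the argument one dimension lower --- for $d=2$ the faces are the edges, on which the monotonicity already gives the claim, while for $d=3$ the square faces reduce to the two-dimensional case just treated --- shows that each face is carried into the corresponding face, i.e. $\boldsymbol{\Phi}(\partial\Omega)\subseteq\partial\Omega$. With (i)--(iii) established, Proposition \ref{th:mapping_general} yields bijectivity of $\boldsymbol{\Phi}$ from $\Omega$ onto itself.

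For the final statement I would argue by joint continuity. Writing $\mathfrak{J}_{\mathbf{a}}^{\rm hf}(\mathbf{X})={\rm det}\bigl(\mathbf{I}+\sum_{m=1}^{M_{\rm hf}}a_m\,\widehat{\nabla}\boldsymbol{\varphi}_m^{\rm hf}(\mathbf{X})\bigr)$, this is a polynomial in $\mathbf{a}$ of degree at most $d$ whose coefficients are products of entries of the $\widehat{\nabla}\boldsymbol{\varphi}_m^{\rm hf}$, hence continuous and uniformly bounded on the compact set $\overline{\Omega}$; the value at $\mathbf{a}=\mathbf{0}$ is ${\rm det}\,\mathbf{I}=1$. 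Therefore $\sup_{\mathbf{X}\in\overline{\Omega}}|\mathfrak{J}_{\mathbf{a}}^{\rm hf}(\mathbf{X})-1|\to0$ as $\mathbf{a}\to\mathbf{0}$, and given $\epsilon\in(0,1)$ I would choose $r_{\epsilon}>0$ small enough that this supremum is at most $1-\epsilon$ whenever $\|\mathbf{a}\|_2<r_{\epsilon}$, which gives $\mathfrak{J}_{\mathbf{a}}^{\rm hf}(\mathbf{X})\geq\epsilon$ uniformly over $\mathbf{X}\in\Omega$ and $\mathbf{a}\in\mathcal{B}_{r_{\epsilon}}(\mathbf{0})$, as claimed.
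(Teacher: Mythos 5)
Your proposal is correct and takes essentially the same approach as the paper: reduce to Proposition \ref{th:mapping_general}, where the only substantive condition is (iii), which both you and the paper establish via the vanishing of tangential derivatives on faces/edges, the resulting factorization of the Jacobian determinant, and a critical-point/monotonicity argument on edge restrictions (the paper phrases the $d=2$ step as a contradiction at an interior extremum, which is your strict-monotonicity argument), with $d=3$ handled by restricting to the face and invoking the $d=2$ case, and with the final statement following from continuity of the determinant and $\mathfrak{J}_{\mathbf{0}}^{\rm hf}\equiv 1$. The only difference is one of care rather than of route: where the paper asserts that the face restriction $\widetilde{\boldsymbol{\Phi}}$ ``clearly'' satisfies the $d=2$ hypotheses, you explicitly verify the required positivity of its tangential Jacobian determinant by pinning down the sign of $\partial_i \Phi_i$ at a vertex through edge monotonicity, thereby filling a step the paper leaves implicit.
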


\begin{proof}
We first consider the case $d=2$.
Recalling Proposition \ref{th:mapping_general}, we only need to verify that $\boldsymbol{\Phi}(\partial \Omega) \subset \partial \Omega$. We here verify that 
$\boldsymbol{\Phi}(\mathcal{E}_{\rm top}) \subset \mathcal{E}_{\rm top}$ where
$\mathcal{E}_{\rm top}:=\{\mathbf{X} = (t, 1): \, t \in (0,1)  \}$ is the top edge of $(0,1)^2$:  proofs for the other edges are analogous.

Exploiting \eqref{eq:boundary_conditions_mapping}, we have that 
$$
\boldsymbol{\Phi}(\mathbf{X}= (t,1)) = \left[
\begin{array}{c}
\varphi(t) \\
1 \\
\end{array}
\right],
\quad
{\rm where} \;
\varphi(t) := \mathbf{e}_1 \cdot \boldsymbol{\Phi}(\mathbf{X}=(t,1)).
$$
By contradiction, we assume that $\varphi([0,1])$ is not contained in $[0,1]$.
Since $\varphi(0)=0$ and $\varphi(1)=1$,  
$\varphi$ has a local minimum or maximum in $(0,1)$; this implies that there exists $\bar{t} \in (0,1)$ such that
$\varphi'(\bar{t})=0$. As a result, we find
$$
\mathbf{e}_1^T \, \widehat{\nabla} \boldsymbol{\Phi} \big|_{\mathbf{X}= (\bar{t},1)} 
=
\left[
\varphi'(\bar{t}) , \; \; 
0 
\right]
= 0,
$$
which implies that $\mathfrak{J}(\mathbf{X}=(\bar{t},1)) = 0$. Contradiction.

{To extend the result to $d=3$, we show that 
$\boldsymbol{\Phi}(\mathcal{E}_{\rm top}) \subset \mathcal{E}_{\rm top}$ where
$\mathcal{E}_{\rm top}:=\{\mathbf{X} = (t,s, 1): \, t,s \in (0,1)  \}$ is the top face of $(0,1)^3$. Towards this end, 
we define $\widetilde{\boldsymbol{\Phi}}(t,s)=[\Phi_1(t,s,1),\Phi_2(t,s,1) ]^T$:  clearly, $\widetilde{\boldsymbol{\Phi}}$ satisfies the hypotheses of Proposition \ref{th:application_unit_square} for $d=2$; as a result, $\widetilde{\boldsymbol{\Phi}}((0,1)^2) = (0,1)^2$ and thus
$ {\boldsymbol{\Phi}}(\mathcal{E}_{\rm top}) =  \mathcal{E}_{\rm top}$. Thesis follows.
}

Proof of the latter statement is a direct consequence of the fact that the determinant of a matrix-valued function is continuous and that
$ \mathfrak{J}_{\mathbf{a}}^{\rm hf}\equiv 1$ for $\mathbf{a}=\mathbf{0}$. We omit the details.
\end{proof}

Condition \eqref{eq:boundary_conditions_mapping} imposes that each edge of the square should be mapped in itself and that each corner is mapped in itself: Figure \ref{fig:geometric_interpretation} provides the  geometric interpretation.
In our implementation, for $d=2$, we define $\boldsymbol{\varphi}_1^{\rm hf},\ldots,\boldsymbol{\varphi}_{M_{\rm hf}}^{\rm hf}$ as 
\begin{equation}
\label{eq:tensorized_polynomials}
\left\{
\begin{array}{l}
\boldsymbol{\varphi}_{m=
i + (i'-1)\bar{M}}^{\rm hf}(\mathbf{X})
=
\ell_i(X_1) \ell_{i'}(X_2) \, X_1 (1 - X_1) \, \mathbf{e}_1 \\[2mm]
\boldsymbol{\varphi}_{m= \bar{M}^2 + 
i + (i'-1)\bar{M}}^{\rm hf}(\mathbf{X})
=
\ell_i(X_1) \ell_{i'}(X_2) \, X_2 (1 - X_2) \, \mathbf{e}_2 \\
\end{array}
\right.
\quad
i,i'=1,\ldots, \bar{M},
\end{equation}
where $\{  \ell_i \}_{i=1}^{\bar{M}}$ are the first $\bar{M}$ Legendre polynomials and $M_{\rm hf} = 2 \bar{M}^2$;
note, however, that  other choices 
satisfying \eqref{eq:boundary_conditions_mapping}
(e.g., Fourier expansions) might also be considered.

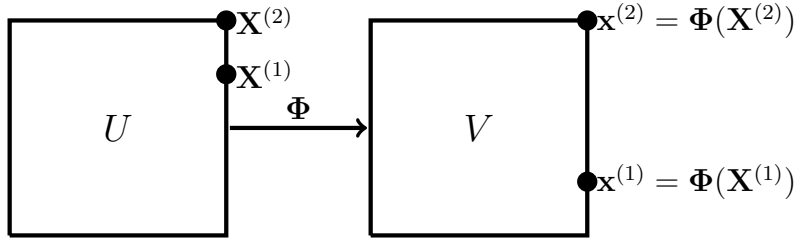
\begin{figure}[h!]
\centering

\begin{tikzpicture}[scale=0.95]
\linethickness{0.3 mm}
\draw[ultra thick]  (0,0)--(3,0)--(3,3)--(0,3)--(0,0);
\draw[ultra thick]  (5,0)--(8,0)--(8,3)--(5,3)--(5,0);

\coordinate [label={center:  {\Large {$U$}}}] (E) at (1.5, 1.5) ;

\coordinate [label={center:  {\Large {$V$}}}] (E) at (6.5, 1.5) ;

\draw[->,ultra thick]  (3.05,1.5)--(4.95,1.5);

\coordinate [label={above:  {\large {$\boldsymbol{\Phi}$}}}] (E) at (4, 1.5) ;

\coordinate [label={right:  {\large {$\mathbf{X}^{(1)}$}}}] (E) at (3, 2.25) ;
\fill (E) circle[radius=4pt];

\coordinate [label={right:  {\large {$\mathbf{x}^{(1)}= \boldsymbol{\Phi}(\mathbf{X}^{(1)})$}}}] (E) at (8, 0.75) ;
\fill (E) circle[radius=4pt];

\coordinate [label={right:  {\large {$\mathbf{X}^{(2)}$}}}] (E) at (3, 3) ;
\fill (E) circle[radius=4pt];

\coordinate [label={right:  {\large {$\mathbf{x}^{(2)}= \boldsymbol{\Phi}(\mathbf{X}^{(2)})$}}}] (E) at (8, 3) ;
\fill (E) circle[radius=4pt];

\end{tikzpicture}

\caption{  geometric interpretation of
condition \eqref{eq:boundary_conditions_mapping} in Proposition  \ref{th:application_unit_square}. 
}
\label{fig:geometric_interpretation}
\end{figure}

{Condition \eqref{eq:condition_equivalent} for the coefficients $\mathbf{a} \in \mathbb{R}^{M_{\rm hf}}$
is difficult to impose computationally; however, we might replace \eqref{eq:condition_equivalent} with the approximation
\begin{equation}
\label{eq:condition_equivalent_weak}
\int_{\Omega} \, 
{\rm exp} \left( \frac{\epsilon  - \mathfrak{J}_{\mathbf{a}}^{\rm hf}(\mathbf{X})}{C_{\rm exp}} \right) \,  + \, 
{\rm exp} \left( \frac{\mathfrak{J}_{\mathbf{a}}^{\rm hf}(\mathbf{X}) - 1/\epsilon }{C_{\rm exp}} \right)
\, dX \leq \delta,
\end{equation}
where $\epsilon \in (0,1)$.
Provided that ${\rm exp} \left(  \frac{\epsilon}{C_{\rm exp}} \right) \gg 1$ and that
$\|  \widehat{\nabla} \mathfrak{J}_{\mathbf{a}}^{\rm hf}  \|_{L^{\infty}(\Omega)}$ is moderate, 
this constraint enforces that   the mapping is invertible for all $\mathbf{X} \in \Omega$:
more formally, for all $\epsilon>0$, there exist $C_{\rm exp}, \delta,C>0$ such that if
$\mathbf{a}$ satisfies \eqref{eq:condition_equivalent_weak} and $\|  \widehat{\nabla} \mathfrak{J}_{\mathbf{a}}^{\rm hf}  \|_{L^{\infty}(\Omega)}<C$, then $\boldsymbol{\Psi}_{\mathbf{a}}^{\rm hf}$ is globally invertible.
 To show this statement for $d=2$, suppose that 
$\mathfrak{J}_{\mathbf{a}}^{\rm hf}(\mathbf{X}^{\star})=0$
for some $\mathbf{X}^{\star} \in \Omega$; then, there exists a ball $B= \mathcal{B}_r(\mathbf{X}^{\star})$ of radius $r \geq 
\left(  \|  \widehat{\nabla} \mathfrak{J}_{\mathbf{a}}^{\rm hf}  \|_{L^{\infty}(\Omega)}    \right)^{-1} \epsilon/2$ such that 
$\mathfrak{J}_{\mathbf{a}}^{\rm hf}(\mathbf{X} ) \leq \epsilon/2$ for all $\mathbf{X} \in B$. As a result, we find\footnote{The factor $\frac{1}{4}$ follows from  the identity 
$|\mathcal{B}_r(\mathbf{X}) \cap \Omega| \geq \frac{\pi r^2}{4}$, for all $r \leq \frac{1}{2}$ and $\mathbf{X} \in \Omega$.
} that
$$
\begin{array}{l}
\displaystyle{
\int_{\Omega} \, 
{\rm exp} \left( \frac{\epsilon  - \mathfrak{J}_{\mathbf{a}}^{\rm hf}(\mathbf{X})}{C_{\rm exp}} \right) \,  + \, 
{\rm exp} \left( \frac{\mathfrak{J}_{\mathbf{a}}^{\rm hf}(\mathbf{X}) - 1/\epsilon }{C_{\rm exp}} \right)
\, dX \, 
\geq  \,
\frac{\pi r^2}{4} \, {\rm exp} \left(\frac{\epsilon}{2 C_{\rm exp}}  \right)
}
\\[3mm]
\displaystyle{
\geq
\frac{\pi}{16} \,
\left(  \|  \widehat{\nabla} \mathfrak{J}_{\mathbf{a}}^{\rm hf}  \|_{L^{\infty}(\Omega)}    \right)^{-2} \, \epsilon^2 \, 
 {\rm exp} \left(\frac{\epsilon}{2 C_{\rm exp}}  \right) =: \delta_0,
}
\end{array}
$$
which implies that $\mathbf{a}$ does not satisfies \eqref{eq:condition_equivalent_weak} for $\delta< \delta_0$.

 We can  interpret $\epsilon \in (0,1)$ as the maximum allowed pointwise contraction induced by the mapping $\boldsymbol{\Psi}^{\rm hf}$ and by its inverse. 
We further remark that  the constant 
$\delta$ should satisfy
\begin{equation}
\label{eq:delta_condition}
\delta \geq  |\Omega| \; \left(
{\rm exp} \left( \frac{\epsilon  - 1}{C_{\rm exp}} \right) \,  + \, 
{\rm exp} \left( \frac{1 - 1/\epsilon }{C_{\rm exp}} \right) \right),
\end{equation}
so that $\mathbf{a}= \mathbf{0}$ is admissible.
In all our numerical examples, we choose 
\begin{equation}
\label{eq:choice_Cdelta}
\epsilon=0.1, \quad 
C_{\rm exp} = 0.025 \epsilon, \quad
\delta = |\Omega|.
\end{equation}
}
 
\subsection{Extension to a more general class of domains}
\label{sec:generalization_theory}
We can exploit the results proved in sections \ref{sec:theory} and \ref{sec:choice_family} to obtain the following result. The proof is straightforward and is omitted.

\begin{proposition}
\label{th:extension_sad}
Let  
$\boldsymbol{\varphi}_1^{\rm hf},\ldots,
\boldsymbol{\varphi}_{M_{\rm hf}}^{\rm hf}$ in \eqref{eq:prescribed_mapping_form}  be of class 
$C^1(\mathbb{R}^d; \, \mathbb{R}^d)$ and 
 satisfy \eqref{eq:boundary_conditions_mapping}. 
Let $U,V \subset \Omega_{\rm box}=(0,1)^d$ satisfy \eqref{eq:hyp_domain}.

Then, for any $\bar{\mathbf{a}} \in \mathbb{R}^{M_{\rm hf}}$,
$\boldsymbol{\Phi}  = \boldsymbol{\Psi}_{ \bar{\mathbf{a}}}^{\rm hf}$ is bijective from $U$ into $V$ if  
\begin{equation}
\label{eq:condition_equivalent_general}
\left\{
\begin{array}{l}
\displaystyle{
\min_{\mathbf{X} \in \overline{\Omega}_{\rm box}} \,  
\,{\rm det} \left(
\widehat{\nabla} \boldsymbol{\Phi}(\mathbf{X}) \right) \, > \, 0,
} \\[3mm]
\displaystyle{
{\rm dist} \left( \boldsymbol{\Phi}(\mathbf{X}), \partial V   \right) = 0
\quad
\forall \, \mathbf{X} \in \partial U.
} \\ 
\end{array}
\right.
\end{equation}
\end{proposition}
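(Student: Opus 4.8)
The plan is to obtain the statement as an immediate specialization of Proposition~\ref{th:mapping_general}. Since $U$ and $V$ are assumed to satisfy~\eqref{eq:hyp_domain}, the entire task reduces to verifying, for the concrete mapping $\boldsymbol{\Phi}=\boldsymbol{\Psi}_{\bar{\mathbf{a}}}^{\rm hf}$ of the form~\eqref{eq:prescribed_mapping_form}, the three hypotheses (i)--(iii) of that proposition on a suitable neighborhood $U_\delta$, and then invoking the proposition verbatim to conclude that $\boldsymbol{\Phi}$ is a bijection from $U$ onto $V$. The two scalar conditions collected in~\eqref{eq:condition_equivalent_general} are precisely what is needed to supply hypotheses (ii) and (iii), while hypothesis (i) is read off from the assumed structure of the mapping.

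First I would dispose of regularity. Because $\boldsymbol{\Psi}_{\bar{\mathbf{a}}}^{\rm hf}(\mathbf{X})=\mathbf{X}+\sum_{m=1}^{M_{\rm hf}}\bar{a}_m\,\boldsymbol{\varphi}_m^{\rm hf}(\mathbf{X})$ with each $\boldsymbol{\varphi}_m^{\rm hf}\in C^1(\mathbb{R}^d;\mathbb{R}^d)$, the mapping is globally $C^1$ on $\mathbb{R}^d$; in particular it is defined and of class $C^1$ on $U_\delta$ for every $\delta>0$, so hypothesis (i) holds for any admissible $\delta$. This is also the natural place to note that the boundary constraints~\eqref{eq:boundary_conditions_mapping}, through Proposition~\ref{th:application_unit_square}, additionally force $\boldsymbol{\Phi}$ to map the ambient box $\Omega_{\rm box}$ onto itself, so the construction remains globally consistent even when $\partial U$ touches $\partial\Omega_{\rm box}$.

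Next I would transfer the determinant and boundary conditions. The first line of~\eqref{eq:condition_equivalent_general} gives $\det(\widehat{\nabla}\boldsymbol{\Phi})>0$ at every point of the compact set $\overline{\Omega}_{\rm box}$; since $\mathbf{X}\mapsto\det(\widehat{\nabla}\boldsymbol{\Phi}(\mathbf{X}))$ is continuous, its infimum over this compact set is attained and strictly positive, so there is $\epsilon>0$ with $\mathfrak{J}(\mathbf{X})\geq\epsilon$ on $\overline{\Omega}_{\rm box}$. As $U\subset\Omega_{\rm box}$ gives $\overline{U}\subseteq\overline{\Omega}_{\rm box}$, the same $\epsilon$ bounds the Jacobian from below on $U$, which is exactly hypothesis (ii). For hypothesis (iii), the second line of~\eqref{eq:condition_equivalent_general} states ${\rm dist}(\boldsymbol{\Phi}(\mathbf{X}),\partial V)=0$ for every $\mathbf{X}\in\partial U$; since $\partial V$ is closed, a point at zero distance from it belongs to it, whence $\boldsymbol{\Phi}(\partial U)\subseteq\partial V$.

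With (i)--(iii) established and~\eqref{eq:hyp_domain} assumed for both $U$ and $V$, Proposition~\ref{th:mapping_general} applies directly and delivers the claim. I do not anticipate a genuine obstacle --- which is why the proof can be called straightforward --- but two points must not be glossed over: the passage from a merely pointwise positive determinant on $\overline{\Omega}_{\rm box}$ to a uniform lower bound $\epsilon$, which relies on compactness together with continuity of the determinant, and the reading of the distance-zero condition as genuine set membership, which relies on the closedness of $\partial V$. A last piece of bookkeeping is to confirm that the enlarged neighborhood $U_\delta$ required by Proposition~\ref{th:mapping_general} lies in the domain of $\boldsymbol{\Phi}$; this is immediate here, since $\boldsymbol{\Psi}_{\bar{\mathbf{a}}}^{\rm hf}$ is defined on all of $\mathbb{R}^d$.
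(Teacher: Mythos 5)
Your proof is correct and follows exactly the route the paper intends: the paper omits the proof of Proposition \ref{th:extension_sad} as ``straightforward,'' saying only that it exploits the results of sections \ref{sec:theory} and \ref{sec:choice_family}, which is precisely your verification of hypotheses (i)--(iii) of Proposition \ref{th:mapping_general} followed by its direct application. The two points you flag --- compactness of $\overline{\Omega}_{\rm box}$ yielding the uniform lower bound $\epsilon$ on the Jacobian, and closedness of $\partial V$ turning the distance-zero condition into the inclusion $\boldsymbol{\Phi}(\partial U)\subseteq\partial V$ --- are exactly the bookkeeping the paper leaves to the reader.
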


We remark that  condition \eqref{eq:condition_equivalent_general}$_2$ is not practical and should in practice 
be replaced by 
$$
{\rm dist} \left( \boldsymbol{\Phi}(\mathbf{X}), \partial V   \right)  \leq \texttt{tol}
\quad
\forall \, \mathbf{X} \in \partial U,
$$
for some tolerance $\texttt{tol}>0$. We are currently working on the extension of Proposition \ref{th:extension_sad} to this more general case: as discussed in section 
 \ref{sec:geometry_reduction},  this extension
would rigorously justify the approach for geometry reduction proposed in this paper.

\section{Data compression}
\label{sec:data_compression}
 
 In section \ref{sec:registration}, we present the registration procedure for $U=V=\Omega$, for data compression. We shall here assume that $\boldsymbol{\Phi}_{\mu}(\Omega)= \Omega$ for all $\mu \in \mathcal{P}$ and $\Omega=(0,1)^2$: in section  \ref{sec:theory_stuff}, we provide  an actionable way to enforce this condition for this choice of $\Omega$.
In section \ref{sec:NM_width}, we introduce the notion of Kolmogorov $N-M$ width, which is a generalization of \eqref{eq:kolmogorov_nwidth}.
Then, in section \ref{sec:offline_online_pMOR}, we 
discuss the integration of the proposed compression method within the standard pMOR offline/online paradigm.
 Finally, in section \ref{sec:numerics_data_compression}, we present  numerical investigations for two two-dimensional model problems.

\subsection{Registration procedure}
\label{sec:registration}

Given the snapshot $\{ u^k  \}_{k=1}^{n_{\rm train}} \subset \mathcal{M}_{\rm u}$,
we first introduce the parameterized function $\boldsymbol{\Psi}^{\rm hf}$ in \eqref{eq:prescribed_mapping_form}-\eqref{eq:tensorized_polynomials} and a reference field $\bar{u} \in \mathcal{M}_{\rm u}$ associated with the parameter $\bar{\mu} \in \mathcal{P}$. Then,
(i)
for $k=1,\ldots, n_{\rm train}$, we choose $\mathbf{a}_{\rm hf}^k$ associated with $u^k$ and $\bar{u}$ by solving an optimization problem (cf. section 
\ref{sec:optimization}); 
(ii)
given the dataset $\{ (\mu^k, \mathbf{a}_{\rm hf}^k)  \}_{k=1}^{n_{\rm train}}$, we use a multi-target regression procedure to generate the mapping $\boldsymbol{\Phi}$ of the form
\begin{equation}
\label{eq:registration_mapping_form}
 \boldsymbol{\Phi}_{\mu} (\mathbf{X}) :=
\mathbf{X} + \sum_{m=1}^{M} \, 
\left( \widehat{\mathbf{a}}_{\mu} \right)_m   \, \boldsymbol{\varphi}_m (\mathbf{X}),
\quad
{\rm where} \;
\left\{
\begin{array}{ll}
\widehat{\mathbf{a}}: \mathcal{P} \to \mathbb{R}^M, &
 \\
\boldsymbol{\varphi}_m \in {\rm span} \{  \boldsymbol{\varphi}_m^{\rm hf} \}_{m=1}^{M_{\rm hf}}
&
M \leq M_{\rm hf},
\\
\end{array}
\right.
\end{equation}
 for all $\mu \in \mathcal{P}$ (cf. section \ref{sec:generalization}).

\subsubsection{Optimization statement}
\label{sec:optimization}

We propose to choose $\mathbf{a}_{\rm hf}^k= \mathbf{a}_{\rm hf}(\mu^k)$ as a solution to 
\begin{subequations} 
\label{eq:optimization_statement}
\begin{equation}
\label{eq:optimization_statement_a}
\begin{array}{l}
\displaystyle{
\min_{\mathbf{a} \in \mathbb{R}^{M^{\rm hf}}} \,
\mathfrak{f}^k \left(\mathbf{a}  \right) \,  + \, \xi \big|   
\boldsymbol{\Psi}_{\mathbf{a}}^{\rm hf}  \big|_{H^2(\Omega)}^2, } \\[3mm]
\displaystyle{
{\rm s.t.} \;
\int_{\Omega} \, 
{\rm exp} \left( \frac{\epsilon  - \mathfrak{J}_{\mathbf{a}}^{\rm hf}(\mathbf{X})}{C_{\rm exp}} \right) \,  + \, 
{\rm exp} \left( \frac{\mathfrak{J}_{\mathbf{a}}^{\rm hf}(\mathbf{X}) - 1/\epsilon }{C_{\rm exp}} \right)
\, dX \leq \delta,} \\
\end{array}
\end{equation}
where  $\mathbf{f}^k(\mathbf{a}) =  \mathfrak{f} \left(\mathbf{a} , \mu^k, \bar{\mu}  \right)  $  --- which is here referred to as \emph{proximity measure} --- is given by
\begin{equation}
\label{eq:L2obj}
\mathfrak{f}\left(
\mathbf{a} , \mu, \bar{\mu} \right)
:= \; \int_{\Omega } \; \big\|   u_{\mu} \circ \boldsymbol{\Psi}_{\mathbf{a}}^{\rm hf} -  {u}_{\bar{\mu}}   \big\|_2^2 \, dX,
\end{equation} 
\end{subequations}
while  the $H^2$ seminorm is given by
$|  \mathbf{v}   |_{H^2(\Omega)}^2 
:= \sum_{i,j,k=1}^d \, \int_{\Omega} \, 
\left(   \widehat{\partial}_{i,j}^2 v_k  \,\right)^2 \, dX$ for all $\mathbf{v} \in 
H^2(\Omega; \mathbb{R}^d) $.  The constraint in \eqref{eq:optimization_statement_a}, which was introduced in 
\eqref{eq:condition_equivalent_weak}, weakly enforces that 
$\mathfrak{J}_{\mathbf{a}}^{\rm hf} \in [\epsilon, 1/\epsilon]$ and thus  that $\boldsymbol{\Psi}^{\rm hf}$ is a bijection from $\Omega$ into itself for all admissible solutions to \eqref{eq:optimization_statement_a}.

Since $| v  |_{H^2(\Omega)}=0$ for all linear polynomials, 
the penalty term measures deviations from linear maps: {due to the condition
$\boldsymbol{\Psi}_{\mathbf{a}}^{\rm hf}(\Omega) = \Omega$, we might further interpret the penalty as a measure of the deviations from the identity map.}
{The penalty in \eqref{eq:optimization_statement_a}  
can be further interpreted as a Tikhonov regularization, and 
has   the effect to control the gradient of the Jacobian $\mathfrak{J}_{\mathbf{a}}^{\rm hf}$
---
recalling the discussion in section \ref{sec:theory_stuff}, the latter is important to enforce  bijectivity. }
In addition,  we observe that
 $\mathfrak{f}^k(\mathbf{a})=0$ if and only if $u^k$ and $\bar{u}$ coincide in the mapped configuration; in particular, we have 
$\mathfrak{f}^k(\mathbf{0})=0$ if $\mu^k=\bar{\mu}$. The latter  implies  that $\mathbf{a}= \mathbf{0}$ is a solution to \eqref{eq:optimization_statement_a} for $\mu^k=\bar{\mu}$, provided that $\delta$ satisfies \eqref{eq:choice_Cdelta}.

The hyper-parameter $\xi$ balances accuracy
 --- measured by $\mathfrak{f}$ --- and smoothness of the mapping. 
We refer to the   results of section 
 \ref{sec:numerics_data_compression} for a numerical investigation of the sensivity of the solution to the choice of $\xi$: in our experience, 
the choice  of     $\xi$  is problem-dependent;
however,  the same value of $\xi$ can be considered for all training points.
{Furthermore, since in our code 
$\boldsymbol{\Psi}^{\rm hf}$ is a polynomial, computation of the penalty function is straightforward: we precompute and store the matrix $\mathbf{A}^{\rm reg} \in \mathbb{R}^{M_{\rm hf}, M_{\rm hf}}$
such that
$A_{m,m'}^{\rm reg} = ((\boldsymbol{\varphi}_{m'}^{\rm hf},      
\boldsymbol{\varphi}_{m}^{\rm hf} ))_{H^2(\Omega)}$ for $m,m'=1,\ldots, M_{\rm hf}$ 
---
$  ((\cdot,       \cdot ))_{H^2(\Omega)}$ is the bilinear form associated with 
$| \cdot |_{H^2(\Omega)}$
---
and then we compute 
$|  \boldsymbol{\Psi}_{\mathbf{a}}^{\rm hf}   |_{H^2(\Omega)}$ as
$\mathbf{a}^T \, \mathbf{A}^{\rm reg} \, \mathbf{a}$.}

\subsubsection{Implementation}

We resort to the Matlab routine \texttt{fmincon} 
\cite{MATLAB:2018b} to solve \eqref{eq:optimization_statement}: the routine relies on an interior point  method (\cite{byrd1999interior}) to find local minima of \eqref{eq:optimization_statement}. In our implementation, we first reorder the snapshots such that
\begin{subequations}
\label{eq:computational_tricks}
\begin{equation}
\mu^{(k+1)} := {\rm arg} \min_{\mu \in \Xi_{\rm train} \setminus  \{ \mu^{(i)}  \}_{i=1}^k   } \, \| \mu^{(k)}  - \mu \|_2, 
\quad
\mu^{(1)} = 
{\rm arg} \min_{\mu \in \Xi_{\rm train}   } \, \| \mu   - \bar{\mu} \|_2, 
\end{equation}
where $\Xi_{\rm train} = \{  \mu^k \}_{k=1}^{n_{\rm train}}$. Then, we choose the initial conditions $\mathbf{a}_0^{(k)}$ for the $k$-th optimization problem as
\begin{equation}
\mathbf{a}_0^{(1)} = \mathbf{0},
\quad
\mathbf{a}_0^{(k)} = \mathbf{a}_{\rm hf}^{({\rm ne}_k)},
\quad
{\rm ne}_k:=
{\rm arg} \min_{i \in \{1,\ldots, k-1\}}
\|  \mu^{(k)}  - \mu^{(i)}  \|_2,
\end{equation}
for  $k=2,\ldots,n_{\rm train}$.
Since the problem is non-convex and thus the solution is not guaranteed to be unique and to depend continuously on $\mu$,   $\mathbf{a}_{\rm hf}^{(k)}$ might be far from $\mathbf{a}_{\rm hf}^{({\rm ne}_k)}$ even if $\| \mu^{(k)} - \mu^{({\rm ne}_k)}  \|_2$ is small: this makes the generalization step (cf. section \ref{sec:generalization}) extremely challenging. For this reason, we propose to add the box constraints:
\begin{equation}
\label{eq:desperate_constraints}
-C_{\infty} \, \| \mu^{(k)} - \mu^{({\rm ne}_k)}  \|_2 \, \leq 
\left( \mathbf{a}  \right)_m  \, - \,
\left( \mathbf{a}_{\rm hf}^{({\rm ne}_k)}  \right)_m  \leq
C_{\infty} \, \| \mu^{(k)} - \mu^{({\rm ne}_k)}  \|_2,
\end{equation}
for $m=1,\ldots,M_{\rm hf}$.
In all our experiments, we set $C_{\infty} = 10$: for this choice of $C_{\infty}$, we have empirically found that this set of box constraints is not active at the local minima, for none of the  cases considered.
{Nevertheless, we envision that for more challenging problems constraint \eqref{eq:desperate_constraints} might be useful to improve the robustness of the approach.
}
\end{subequations}

In our numerical experiments, \texttt{fmincon} converges to local minima in $10^2-10^3$ iterations; the computational cost on a commodity laptop is $\mathcal{O}( 1-5{[\rm s]})$ for  all tests run. 
{Given the snapshot set $\{ u^k \}_k$, 
the cost per iteration is dominated by the computation of $u^k$ and $\nabla u^k$ in the mapped quadrature points $\{  \boldsymbol{\Psi}_{\mathbf{a}}^{\rm hf}(\mathbf{x}_q^{\rm qd}) \}_{q=1}^{N_{\rm q}}$ and by subsequent computation of the derivative of $\partial_{a_m} \mathfrak{f}^k$ for $m=1,\ldots,M_{\rm hf}$: for structured grids, this cost scales with $\mathcal{O} \left(N_{\rm q} \left( \log (N_{\rm hf}) + M_{\rm hf} \right)  \right)$.
} 

\subsubsection{Connection with optimal transport}
\label{remark:optimal_transport}
 
Let us assume that $u,\bar{u}$ are probability {densities} over $\Omega$, that is 
$u,\bar{u} \geq 0$ and $\int_{\Omega} u \, dx= \int_{\Omega} \bar{u} \, dx = 1$. Then, $\boldsymbol{\Phi}^{\rm opt}$ is an optimal transport map if it is a global minimizer of
\begin{equation}
\label{eq:optimal_transport}
\min_{\boldsymbol{\Phi}: \Omega \to \Omega} \; \int_{\Omega} \, 
\bar{u} (\mathbf{X}) \, \|
\mathbf{X}  \, -  \, \boldsymbol{\Phi}(\mathbf{X})  \|_2^2 \, dX \, \quad
{\rm s.t.} \; \;
\bar{u}(\mathbf{X}) = u( \boldsymbol{\Phi}(\mathbf{X}) ) \, \mathfrak{J}(\mathbf{X}) \; \; \forall \, \mathbf{X} \in \Omega,
\end{equation}
where $ \mathfrak{J} = {\rm det} \left(\widehat{\nabla} \boldsymbol{\Phi}  \right)$. A barrier method to solve \eqref{eq:optimal_transport} reads as 
\begin{equation}
\label{eq:optimal_transport_barrier}
\min_{\boldsymbol{\Phi}: \Omega \to \Omega} \; 
\int_{\Omega} \, \left(   \bar{u}(\mathbf{X})  \, - \, u( \boldsymbol{\Phi}(\mathbf{X}) ) \, \mathfrak{J}(\mathbf{X}) 
\right)^2 \, dX \, + \, \frac{1}{\lambda} \, 
\int_{\Omega} \, \bar{u}(\mathbf{X}) \, \|  \mathbf{X}  -  \boldsymbol{\Phi}(\mathbf{X})  \|_2^2 \, dX,
\end{equation}
where $\lambda \gg 1$. 

The first addend in \eqref{eq:optimal_transport_barrier} is closely linked to  $\mathfrak{f}$ in \eqref{eq:L2obj}, while the second term can be interpreted as a measure of the deviation of $\boldsymbol{\Phi}$ from the identity map, exactly as $|\boldsymbol{\Phi}  |_{H^2(\Omega)}$. Note, however, that there are important differences between \eqref{eq:optimization_statement} and \eqref{eq:optimal_transport_barrier}.
First,
  in \eqref{eq:optimal_transport_barrier}, the functions $u, \bar{u}$ should be probability {densities}, while in \eqref{eq:optimization_statement} $u, \bar{u}$  are  arbitrary real-valued functions in a  Hilbert spaces $\mathcal{U}$ contained in $L^2(\Omega)$. 
Second,  if $u, \bar{u} \geq 0$, solutions to \eqref{eq:optimization_statement} do not in general conserve mass. 
Third, if $u, \bar{u}$ are compactly supported in $\Omega$, solutions to \eqref{eq:optimal_transport} are not guaranteed to be locally invertible in $\Omega$.

\subsubsection{Generalization}
\label{sec:generalization}

Given the dataset of pairs $\{ (\mu^k, \mathbf{a}_{\rm hf}^k )\}_{k=1}^{n_{\rm train}}$, we resort to a multi-variate multi-target regression procedure to compute the mapping 
$\boldsymbol{\Phi}$
of the form \eqref{eq:registration_mapping_form}.
First, we   resort to $\| \cdot  \|_2$-POD  to determine a low-dimensional  approximation of $\{  \mathbf{a}_{\rm hf}^k \}_k$:
\begin{subequations}
\label{eq:generalization}
\begin{equation}
\label{eq:genPOD}
\mathbf{a}_{\rm hf}^k \approx \mathbf{U}_{\boldsymbol{\Phi}} \, \mathbf{a}^k,
\quad
\mathbf{U}_{\boldsymbol{\Phi}} \in \mathbb{R}^{M_{\rm hf},M},
\; \;
\mathbf{U}_{\boldsymbol{\Phi}}^T \mathbf{U}_{\boldsymbol{\Phi}}  = \mathbbm{1}_{M},
\; \;
M < M_{\rm hf}.
\end{equation}
Then, we build the regressors $\widehat{a}_{m}: \mathcal{P} \to \mathbb{R}$  based on the datasets
$\{ (\mu^k,   a_{\rm hf}^{m,k} ) $ 
$\}_{k=1}^{n_{\rm train}}$,  
$a_{\rm hf}^{m,k} := \left(
 \mathbf{U}_{\boldsymbol{\Phi}}^T \mathbf{a}_{\rm hf}^k  \right)_{m}$,
for $m=1,\ldots,M$;
 finally, we return the mapping
\begin{equation}
\label{eq:gen_final}
\boldsymbol{\Phi}_{\mu}(\mathbf{X}) \, = \, \mathbf{X} \, + \, \sum_{m=1}^{M} \, \left( \widehat{\mathbf{a}}_{\mu} \right)_m \,  
\boldsymbol{\varphi}_m (\mathbf{X}),
 \;
\left\{
\begin{array}{l}
\displaystyle{
 \widehat{\mathbf{a}}:=
\left[
\widehat{a}_{1}, \ldots, \widehat{a}_{M}  \right]^T
} \\[3mm]
\displaystyle{\boldsymbol{\varphi}_m (\mathbf{X}) \, = \sum_{m'=1}^{M_{\rm hf}} \, \left( \mathbf{U}_{\boldsymbol{\Phi}} \right)_{m',m} \,
\boldsymbol{\varphi}_{m'}^{\rm hf}(\mathbf{X})
} \\ 
\end{array}
\right. 
\end{equation}
\end{subequations}

Some comments are in order. First, application of POD  in \eqref{eq:genPOD} leads to a (potentially substantial) reduction of the  size of the mapping expansion and thus ultimately to a reduction of online costs;  application of POD  also reduces  the importance  of the choice of $M_{\rm hf}$ in \eqref{eq:family_mappings} since it provides an automatic way of choosing the size of the expansion at the end of the offline stage. Note that, since POD is linear, the resulting expansion satisfies boundary conditions in
\eqref{eq:boundary_conditions_mapping}: 
applying Proposition \ref{th:application_unit_square}, we
{conclude that there exists a ball $B=\mathcal{B}_{r_{\epsilon}}(\mathbf{0})$ in $\mathbb{R}^M$ for which $\boldsymbol{\Phi}_{\mu}$ is bijective from $\Omega$ into itself if 
$\widehat{\mathbf{a}}_{\mu} \in B$.
}
Second,  any linear or nonlinear strategy for multivariate regression can be employed to construct  $\widehat{\mathbf{a}}$ based on
$\{ (\mu^k, \mathbf{a}_{\rm hf}^k ) \}_{k=1}^{n_{\rm train}}$:
in this work, we resort to a kernel-based Ridge regression procedure  based on inverse multiquadric RBFs (\cite{wendland2004scattered}). 
Third, we remark that  the mapping $\boldsymbol{\Phi}_{\mu}$ is not guaranteed to be bijective for all $\mu \in \mathcal{P}$, particularly for small-to-moderate values of $n_{\rm train}$: this represents a major issue of the methodology that will be addressed in a subsequent work. We remark, nevertheless, that our approach is able to provide accurate and stable results for $n_{\rm train}= \mathcal{O}(10^2)$ for all test cases considered in this paper.
Finally, given the eigenvalues $\{ \lambda_i  \}_{i=1}^{n_{\rm train}}$ of the POD Kernel matrix
$\mathbf{C}_{k,k'}:= \mathbf{a}^k \cdot \mathbf{a}^{k'}$, we choose $M$ in \eqref{eq:genPOD} 
based on the criterion \eqref{eq:POD_cardinality_selection}, 
for some problem-dependent tolerance $tol_{\rm POD}>0$ that will be specified in the numerical sections.

\subsubsection{Review of the computational procedure}

Algorithm \ref{alg:registration} summarizes the pieces of the general approach proposed in this paper.

\begin{algorithm}[H]                      
\caption{Registration algorithm}     
\label{alg:registration}                           
 \small
\emph{Inputs:}  $\{ (\mu^k,  u^k = u_{\mu^k}) \}_{k=1}^{n_{\rm train}} \subset  \mathcal{P} \times \mathcal{M}_{\rm u}$ snapshot set,
$\bar{u}  \in \mathcal{M}_{\rm u}$  reference field.

\emph{Output:} 
parametric mapping
$\boldsymbol{\Phi}: \Omega \times \mathcal{P} \to \mathbb{R}^d$.

 \normalsize 

\begin{algorithmic}[1]
\State
Definition of the parametric function 
$\boldsymbol{\Psi}^{\rm hf}$ 
(cf. \eqref{eq:prescribed_mapping_form}-\eqref{eq:tensorized_polynomials}).
\vspace{3pt}

\State
Computation of $\mathbf{a}_{\rm hf}^k$ based on the pair of fields $(u^k, \bar{u})$ (cf. section \ref{sec:optimization}).

\hfill
$\rightarrow \{ ( \mu^k, \mathbf{a}_{\rm hf}^k) \}_{k=1}^{n_{\rm train}}$

\State
Generalization: $\{ (\mu^k, \mathbf{a}_{\rm hf}^k) \}_{k=1}^{n_{\rm train}} \to 
\widehat{\mathbf{a}}: \mathcal{P} \to \mathbb{R}^M, \,
\{ \boldsymbol{\varphi}_m \}_{m=1}^M$
(cf. section \ref{sec:generalization}).
\vspace{3pt}

\State
Return   $\boldsymbol{\Phi}$ (cf. \eqref{eq:generalization}).
\vspace{3pt}
\end{algorithmic}

\end{algorithm}

\subsection{Two-level approximations and $N$-$M$ Kolmogorov widths}
\label{sec:NM_width}

We can interpret Lagrangian approximations as two-level approximations of parametric fields, $u_{\mu}  \approx \widehat{u}_{\mu} \circ  \boldsymbol{\Phi}_{\mu}^{-1} $: the inner layer corresponds to the mapping process, while the outer layer is associated with the linear approximation. This observation highlights the connection between Lagrangian approaches and deep networks. 
As for deep vs shallow networks (\cite{poggio2017and}), the use of Lagrangian-based pMOR methods for a given class of problems should be supported by evidence of their superior approximation power compared to linear approaches. 
While performance of linear methods can be theoretically measured through the Kolmogorov $N$-width $d_N(\mathcal{M}_{\rm u})$ in \eqref{eq:kolmogorov_nwidth}, we here introduce the notion of Kolmogorov $N-M$ width  (see also \cite{rim2019nonlinear})
to assess performance of Lagrangian approximations.

{In view of the definition of the $N-M$ width, given the $M$-dimensional space $\mathcal{Y}_M:=  {\rm span} \{  \boldsymbol{\varphi}_m \}_{m=1}^M \subset {\rm Lip} (\Omega; \mathbb{R}^d )$, and the tolerance $\epsilon \in (0,1)$, we define $\mathcal{Y}_M^{\rm bis, \epsilon}$ such that
\begin{subequations}
\label{eq:kolmogorov_NM_width}
\begin{equation}
\label{eq:kolmogorov_NM_width_a}
\mathcal{Y}_M^{\rm bis, \epsilon}:=  \{
\boldsymbol{\Phi} = \mathbf{X} + \boldsymbol{\varphi}(\mathbf{X}): \; \boldsymbol{\Phi}(\Omega)=\Omega, \quad
{\rm det} \left( \widehat{\nabla} \boldsymbol{\Phi} \right) \geq \epsilon \; {\rm a.e.} \}.
\end{equation}
Then, given the manifold $\mathcal{M}_{\rm u}$ in the Banach space $\mathcal{U}$, we define the $N-M$ width as
\begin{equation}
d_{N,M,\epsilon}\left(\mathcal{M}_{\rm u}; \, \mathcal{U}  \right) \, = \, 
\inf_{
\substack{\mathcal{Y}_M \subset {\rm Lip}(\Omega; \mathbb{R}^d),     \; {\rm dim}(\mathcal{Y}_M) = M \\ 
\mathcal{Z}_N \subset \mathcal{U}, \; {\rm dim}(\mathcal{Z}_N) = N \\
}}
\; 
\sup_{w \in \mathcal{M}_{\rm u}} \, 
\inf_{
\substack{\boldsymbol{\varphi} \in \mathcal{Y}_M^{\rm bis, \epsilon} \\ 
z \in \mathcal{Z}_N \\
}}
\;    \| w \circ \boldsymbol{\varphi} - z  \|_{\mathcal{U}}
\end{equation}
where the second argument stresses the dependence on the norm of the  space $\mathcal{U}$.
\end{subequations} 
}
Note that, for any $N, M \geq 1$, 
$d_{N,M,\epsilon}(\mathcal{M}_{\rm u};  \, \mathcal{U} ) \leq  d_N \left(  {\mathcal{M}}_{\rm u}; \, \mathcal{U}   \right)$, 
$d_{N,0,\epsilon}(\mathcal{M}_{\rm u}) =  d_N \left(  {\mathcal{M}}_{\rm u} ; \mathcal{U}    \right)$, and 
$d_{N,M,\epsilon=1}(\mathcal{M}_{\rm u}; \mathcal{U}  ) =  d_N \left(  {\mathcal{M}}_{\rm u}; \mathcal{U}     \right)$.

We can provide estimates of the $N-M$ width for representative solution manifolds:
we refer to section \ref{sec:NM_widths_examples} for the proofs.
First, we consider  the solution  $u_{\mu}$ to the one-dimensional problem
\begin{subequations}
\label{eq:NM_boundary_layer}
\begin{equation}
-\partial_{x x} \, u_{\mu} \, + \, \mu^2 \, u_{\mu} \, = \, 0,
\; \; {\rm in} \; \; \Omega_{\rm 1D} = (0,1),
\quad u_{\mu}(0)=1, \; \; \partial_x \, u_{\mu}(1) = 0,
\end{equation}
where $\mathcal{P}=[\mu_{\rm min}, \mu_{\rm max}=\epsilon^{-2} \mu_{\rm min}]$ and $e^{\mu_{\rm min}} \gg 1$; for this problem, we find \eqref{eq:NM_boundary_layer}
\begin{equation}
d_{N=1, M=1, \epsilon}\left(\mathcal{M}_{\rm u}, \; L^2(\Omega_{\rm 1D})  \right)
\, 
\leq     
\frac{1}{\sqrt{
1 + \epsilon}}
\, {\rm exp} \left( 
- \frac{\mu_{\rm min}}{1+\epsilon}
 \right) \, + \,
 \frac{e^{-  \mu_{\rm min}}}{1 + e^{-2 \mu_{\rm min}}}
\end{equation}
\end{subequations}
Second, we consider 
  $u_{\mu}= {\rm sign} \left( x  - \mu \right)$, $x \in \Omega_{\rm 1D}$, $\mu \in \mathcal{P}=[1/3, 2/3]$: in this case, we find  
\begin{equation}
\label{eq:NM_shock_wave}
\begin{array}{l}
\displaystyle{
d_{N}\left(\mathcal{M}_{\rm u}, \; L^2(\Omega_{\rm 1D})  \right) \, = \,
\mathcal{O} \left( \frac{1}{\sqrt{N}} \right),
}
\\[3mm]
\displaystyle{
d_{N, M, \epsilon}\left(\mathcal{M}_{\rm u}, \; L^2(\Omega_{\rm 1D})  \right)
\, = \, 0,
\; \; \;
\forall \, N,M \geq 1, \; \epsilon\leq \frac{2}{3}.
}
\\ 
\end{array}
\end{equation}
Note that the manifold in \eqref{eq:NM_boundary_layer} presents a boundary layer  at $x=0$, while the manifold  in \eqref{eq:NM_shock_wave} is associated with an advection-reaction problem with time interpreted as parameter: therefore, these estimates suggest that Lagrangian methods might be effective for problems with boundary layers and/or travelling waves. 
As third and last example,  we consider  the parametric field 
\begin{subequations}
\label{eq:NM_2D}
\begin{equation}
u: \mathcal{P} \to L^2((0,1)^2),
\quad
u_{\mu}(\mathbf{x}) \, = \, 
\left\{
\begin{array}{ll}
0 & {\rm if} \; x_2 < f_{\mu}(x_1) \\[3mm] 
1 & {\rm if} \; x_2 > f_{\mu}(x_1) \\
\end{array}
\right.
\end{equation}
where $f_{\mu} \in {\rm Lip}([0,1])$ and 
$f_{\mu}([0,1]) \subset [\delta, 1 - \delta]$, for some $\delta \in (0,1/2)$ and for all $\mu \in \mathcal{P}$. In this case, given $0 < \epsilon < 2 \delta$, we find
\begin{equation}
d_{N,M, \epsilon}
\left( \mathcal{M}_{\rm u}, L^1(\Omega) \right)
\leq  \, C_{\delta} \, \frac{
d_M\left(\mathcal{M}_{\rm f}, {\rm Lip}([0,1])\right)
}{\sqrt{N}}.
\end{equation}
where  
$\mathcal{M}_{\rm f} := \{ f_{\mu}: \mu \in \mathcal{P}  \}$,
 $\| v \|_{{\rm Lip}([0,1])} := \| v  \|_{L^{\infty}([0,1])} + 
\| v' \|_{L^{\infty}([0,1])}$, 
and the multiplicative constant $C_{\delta}$ depends on $\delta$.
The latter estimate suggests a multiplicative effect between $N$- and $M$- convergence: we further investigate this aspect in the numerical results.
\end{subequations}

\subsection{Integration within the offline/online paradigm}
\label{sec:offline_online_pMOR}

Algorithm  \ref{alg:abstract_lagrange} summarizes the key steps of pMOR  procedures for \eqref{eq:standard_PMOR_setting} based on Lagrangian nonlinear data compression. 
First, we generate a set of snapshots $u^1,\ldots, u^{n_{\rm train}} \in \mathcal{M}_{\rm u}$ associated with the parameters $\mu^1,\ldots, \mu^{n_{\rm train}} \in \mathcal{P}$:
the field $u$ might be the solution to the PDE or a set of model coefficients. Then, we use the snapshot set $\{ u^k \}_k$ to generate the problem-dependent mapping $\boldsymbol{\Phi}$, and  we recast the problem in the form \eqref{eq:mapped_PMOR_setting}. Then, we use standard pMOR techniques to generate a ROM for \eqref{eq:mapped_PMOR_setting}. During the online stage, given a new parameter value $\mu^{\star} \in \mathcal{P}$, we query the ROM to estimate the output of interest and the associated prediction error. 

\begin{algorithm}[H]                      
\caption{pMOR with Lagrangian nonlinear data compression. Offline/online decomposition.}     
\label{alg:abstract_lagrange}                           

\textbf{Offline stage}

\begin{algorithmic}[1]
\State
Build the dataset $\{(\mu^k, u^k = u_{\mu^k} )  \}_{k=1}^{n_{\rm train}}$.
\vspace{3pt}

\State
Compute the mapping $\{ \boldsymbol{\Phi}_{\mu}: \mu \in \mathcal{P} \}$ based on
 $\{(\mu^k, u^k)  \}_{k=1}^{n_{\rm train}}$
 (cf. section \ref{sec:registration}).
\vspace{3pt}

\State
Generate the ROM for \eqref{eq:mapped_PMOR_setting}.
\end{algorithmic}

\textbf{Online stage}

\begin{algorithmic}[1]
\State
Given $\mu^{\star}$, query the ROM to estimate 
$\hat{\mathbf{y}}_{\mu^{\star}}$ and the error
$\|  \hat{\mathbf{y}}_{\mu^{\star}}  -   {\mathbf{y}}_{\mu^{\star}}    \|_2$.
\vspace{3pt}
\end{algorithmic}
\end{algorithm}

\subsubsection{{A POD-Galerkin ROM for advection-diffusion-reaction problems}}

We discuss the application of our approach to the advection-diffusion-reaction problem:
\begin{equation}
\label{eq:ADR_general}
\left\{
\begin{array}{ll}
\displaystyle{
 - \, 
\nabla \cdot \left(  \mathbf{K}_{\mu} \, \nabla z_{\mu}  -  \mathbf{c}_{\mu} \, z_{\mu}  \right) \, + \,
\sigma_{\mu} \, z_{\mu} \,  = \, f_{\mu}
}
& {\rm in} \,  \Omega,
\\[3mm]
z_{\mu}= z_{\rm D, \mu}
& {\rm on} \, \Gamma_{\rm D} \subset  \partial \Omega \\[3mm]
\left( \mathbf{K}_{\mu} \, \nabla z_{\mu}  - \mathbf{c}_{\mu} \, z_{\mu} \right) \,\cdot \,  \mathbf{n} \, = \, g_{\mu}
& {\rm on} \, \Gamma_{\rm N} =   \partial \Omega  \setminus \Gamma_{\rm D}\\
\end{array}
\right.
\end{equation}
The FE discretization of \eqref{eq:ADR_general}
(we omit the superscript $(\cdot)^{\rm hf}$ to shorten notation) reads as:
find  $z_{\mu} \in \mathcal{X}$ such that
\begin{equation}
\label{eq:ADR_FEM_general}
\begin{array}{rl}
\displaystyle{\mathcal{G}_{\mu}(z_{\mu}, v) \,= }
 &
\displaystyle{ \sum_{k=1}^{n_{\rm el}} \,
\int_{\texttt{D}^k} \,   \boldsymbol{\Upsilon}_{\mu}^{\rm el} \cdot A^{\rm el}(z_{\mu}, v) \,  - f_{\mu} \, v \, dx } 
 \\[3mm]
 &
 \displaystyle{  + \,
\int_{\partial \texttt{D}^k} \, 
 \boldsymbol{\Upsilon}_{\mu}^{\rm ed} \cdot A^{\rm ed}(z_{\mu}, v) \,  - f_{\mu}^{\rm ed} \, v \, dx \, = \, 0,  
 \quad
 \forall \, v \in \mathcal{X}^{\rm hf}},
 \\ 
\end{array}
\end{equation}
where $ \boldsymbol{\Upsilon}_{\mu}^{\rm el}, \boldsymbol{\Upsilon}_{\mu}^{\rm ed}, f_{\mu}, f_{\mu}^{\rm ed}$ are parameter-dependent coefficients and $A^{\rm el}, A^{\rm ed}$ are parameter - independent local bilinear operators.

Given the parametric mapping $\boldsymbol{\Phi}: \Omega \times \mathcal{P} \to \Omega$, 
provided that the data and the mapping are sufficiently smooth,
we can prove that the mapped field
$\tilde{z}_{\mu} = z_{\mu} \circ \boldsymbol{\Phi}_{\mu}$ solves a problem of the form \eqref{eq:ADR_general} with coefficients:
\begin{equation}
\label{eq:mapped_coefficients_general}
\left\{
\begin{array}{lll}
\displaystyle{
\mathbf{K}_{\mu}^{\star}:= \mathfrak{J}_{\mu} \, \widehat{\nabla} \boldsymbol{\Phi}_{\mu}^{-1} \, \widetilde{\mathbf{K}}_{\mu} \,
\widehat{\nabla} \boldsymbol{\Phi}_{\mu}^{-T},}
&
\displaystyle{
\mathbf{c}_{\mu}^{\star} :=
\mathfrak{J}_{\mu} \, \widehat{\nabla} \boldsymbol{\Phi}_{\mu}^{-1}  \widetilde{\mathbf{c}}_{\mu},}
&
\displaystyle{
\sigma_{\mu}^{\star} := \mathfrak{J}_{\mu} \, \widetilde{\sigma}_{\mu},
}
\\[3mm]
\displaystyle{
f_{\mu}^{\star} := \mathfrak{J}_{\mu} \, \tilde{f}_{\mu}^{\star},
}
&
\displaystyle{
g_{\mu}^{\star}:=  \mathfrak{J}_{\mu} \, \| \widehat{\nabla} \boldsymbol{\Phi}_{\mu}^{-T} \widehat{\mathbf{n}}  \|_2 \,
\tilde{g}_{\mu},
}
&
\displaystyle{
z_{\rm D, \mu}^{\star}
=
\tilde{z}_{\rm D, \mu},
}
\\
\end{array}
\right.
\end{equation}
where the symbol $\widetilde{(\cdot)}$ indicates the composition with $\boldsymbol{\Phi}_{\mu}$, and 
$\widehat{\mathbf{n}}$ denotes the normal in the reference configuration --- which coincides with $\mathbf{n}$ on $\partial \Omega$, provided that $\boldsymbol{\Phi}_{\mu}(\Omega)=\Omega$.
Exploiting \eqref{eq:ADR_FEM_general}, we find that the mapping process  will simply lead to different expressions for $ \boldsymbol{\Upsilon}_{\mu}^{\rm el}, \boldsymbol{\Upsilon}_{\mu}^{\rm ed}, f_{\mu}, f_{\mu}^{\rm ed}$, which can be efficiently computed. 

Problem \eqref{eq:ADR_FEM_general} with coefficients in \eqref{eq:mapped_coefficients_general} is not expected to be parametrically-affine (see, e.g., \cite{quarteroni2015reduced}); 
To devise an online-efficient ROM to approximate $\tilde{z}_{\mu}$, we should thus introduce the parametrically-affine approximations 
$\boldsymbol{\Upsilon}_{\mu}^{\rm el,eim}$,
$f_{\mu}^{\rm eim}$,
$\boldsymbol{\Upsilon}_{\mu}^{\rm ed,eim}$ , 
 $f_{\mu}^{\rm ed,eim}$
of 
$\boldsymbol{\Upsilon}_{\mu}^{\rm el}$, $f_{\mu}$, 
$\boldsymbol{\Upsilon}_{\mu}^{\rm ed}$, 
$f_{\mu}^{\rm ed}$,
\begin{equation}
\label{eq:eim_ADR}
\begin{array}{ll}
\displaystyle{
\boldsymbol{\Upsilon}_{\mu}^{\rm el, eim}(\mathbf{x}):=
\sum_{q=1}^{Q_{\rm a, el}} \, \Theta_{\mu, q}^{\rm el, a} \, \boldsymbol{\Upsilon}_q^{\rm el}(\mathbf{x}),
}
&
\displaystyle{
\boldsymbol{\Upsilon}_{\mu}^{\rm ed, eim}(\mathbf{x}):=
\sum_{q=1}^{Q_{\rm a, ed}} \, \Theta_{\mu, q}^{\rm ed, a} \, \boldsymbol{\Upsilon}_q^{\rm ed}(\mathbf{x}),
}
\\[3mm]
\displaystyle{
f_{\mu}^{\rm  eim}(\mathbf{x}):=
\sum_{q=1}^{Q_{\rm f, el}} \, \Theta_{\mu, q}^{\rm el, f} \, f_q^{\rm el}(\mathbf{x}),
}
&
\displaystyle{
f_{\mu}^{\rm ed, eim}(\mathbf{x}):=
\sum_{q=1}^{Q_{\rm f, ed}} \, \Theta_{\mu, q}^{\rm ed, f} \, f_q^{\rm ed}(\mathbf{x}).
}
\\
\end{array}
\end{equation}
We here resort to the 
empirical interpolation method (EIM, \cite{barrault2004empirical}) 
and to  one of  its extensions to vector-valued fields (cf. \cite[Appendix B]{taddei2018offline}): we refer to section \ref{sec:eim} for further details concerning the implementation of EIM.
Then we substitute these approximations in \eqref{eq:ADR_FEM_general}  to obtain a parametrically-affine surrogate of $\mathcal{G}_{\mu}$:
$$
\begin{array}{rl}
\displaystyle{
\mathcal{G}_{\mu}^{\rm eim}(z_{\mu}^{\rm eim}, v) \,=
}
&
\displaystyle{
 \sum_{k=1}^{n_{\rm el}} \,
\int_{\texttt{D}^k} \, 
 \boldsymbol{\Upsilon}_{\mu}^{\rm el,eim} \cdot A^{\rm el}(z_{\mu}^{\rm eim}, v) \,  - f_{\mu}^{\rm eim} \, v \, dx
}
\\[3mm]
&
\displaystyle{
\, + \,
\int_{\partial \texttt{D}^k} \, 
 \boldsymbol{\Upsilon}_{\mu}^{\rm ed,eim} \cdot A^{\rm ed}(z_{\mu}^{\rm eim}, v) \,  - f_{\mu}^{\rm ed,eim} \, v \, dx
\, = \, 0,
\quad
\forall \, v \in \mathcal{X}.
}
\end{array}
$$
Then, given the reduced space $\mathcal{Z}_N := {\rm span}\{  \zeta_n \}_{n=1}^N \subset \mathcal{X}$, we define the online-efficient Galerkin ROM: 
\begin{equation}
\label{eq:galerkin_ROM_ADR}
{\rm find} \, \widehat{z}_{\mu} \in \mathcal{Z}_N: \; 
\mathcal{G}_{\mu}^{\rm eim}\left(\widehat{z}_{\mu}, v \right) \, = \, 0
\quad
\forall \, v \in \mathcal{Z}_N.
\end{equation}

We  resort to POD to build the reduced space $\mathcal{Z}_N$.  
Note  that the weak or strong Greedy algorithms could also be used to build the space $\mathcal{Z}_N$; similarly, we might also rely on Petrov-Galerkin (minimum residual) projection to estimate the solution, and we might also consider several other hyper-reduction techniques to achieve online efficiency. Since the focus of this paper is to assess the performance of the mapping procedure, we do not further discuss these aspects in the remainder.  

We conclude this section with three remarks.
 
 \begin{remark}
\label{remark:a_posteriori_error}
\textbf{Error estimation.}
As discussed in the introduction, in pMOR it is key to estimate the prediction error. For second-order elliptic problems, if we consider the norm\footnote{Similar estimates can be obtained for other norms.}
$\| \cdot \|:= \sqrt{ \int_{\Omega} \, \| \nabla \cdot \|_2^2 \, dx}$, it is straightforward to verify that 
$$
\mathfrak{c}_{\mu} \|  \widetilde{z}_{\mu} - \widehat{z}_{\mu}  \| \leq
\|   {z}_{\mu} - \widehat{z}_{\mu} \circ \boldsymbol{\Phi}_{\mu}^{-1} \|
\leq
\mathfrak{C}_{\mu}   \|  \widetilde{z}_{\mu} - \widehat{z}_{\mu}  \|,
$$
where $\widehat{z}_{\mu}$ denotes a generic approximate solution to \eqref{eq:mapped_PMOR_setting}, 
$\mathfrak{c}_{\mu} = \min_{\mathbf{X} \in \overline{\Omega}} \, \lambda_{\rm min}(\mathbf{K}_{\mu})$,
$\mathfrak{C}_{\mu} = \max_{\mathbf{X} \in \overline{\Omega}} \, \lambda_{\rm max}(\mathbf{K}_{\mu})$, and 
$\mathbf{K}_{\mu}=  \, \mathfrak{J}_{\mu} \, \widehat{\nabla} \boldsymbol{\Phi}_{\mu}^{-1} \, \widehat{\nabla} \boldsymbol{\Phi}_{\mu}^{-T}$.
Provided that $\mathfrak{c}_{\mu},\mathfrak{C}_{\mu}= \mathcal{O}(1)$, traditional residual-based error estimates (see, e.g., \cite[Chapter 3]{quarteroni2015reduced})
 in the mapped configuration can be used to sharply  bound the prediction error 
$\|   {z}_{\mu} - \widehat{z}_{\mu} \circ \boldsymbol{\Phi}_{\mu}^{-1} \|$.
\end{remark}
 
\begin{remark}
\textbf{Online computation of $\tilde{z}_{\mu}$.}
The online solution to the mapped problem involves 
(i) the evaluation of the mapping $\boldsymbol{\Phi}_{\mu}$ and of its gradient  in the $(Q_{\rm a,el} + Q_{\rm a,ed} + Q_{\rm f,el} + Q_{\rm f,ed})$ interpolation points, and
(ii) the assembling of the reduced system and its solution.
The second step scales  with $\mathcal{O}( (Q_{\rm a,el} + Q_{\rm a,ed}) N^2 + N^3 )$, while the first step depends on the supervised learning algorithm used to estimate the $M$ mapping coefficients $\widehat{\mathbf{a}}_{\mu}$: for RBF approximations, plain implementations require 
$\mathcal{O}(n_{\rm train} M)$ floating point operations but several acceleration techniques are now available to dramatically reduce the costs
(see \cite{roussos2005rapid,wendland2002fast}).
{It is difficult to establish a connection between the size of the EIM expansions in the physical and reference configuration: if some of the coefficients are non-affine, the mapping might have the effect of simplifying hyper-reduction  (cf. section \ref{sec:model_pb_diff}); if most or all coefficients are affine in parameter,
the mapping process likely leads to much longer expansions (see, e.g., the example 
in section  \ref{sec:hyperbolic}).
}
\end{remark}

\begin{remark}
\textbf{Pointwise estimation of $z_{\mu}$.}
Computation of $\hat{z}_{\mu} \circ \boldsymbol{\Phi}_{\mu}^{-1}(\mathbf{x})$ for a given $\mathbf{x} \in \Omega$ involves the  evaluation  of $\boldsymbol{\Phi}_{\mu}^{-1}(\mathbf{x})$, which requires the solution to the nonlinear problem
$$
\min_{\mathbf{X} \in \overline{\Omega} } \| \boldsymbol{\Phi}_{\mu}(\mathbf{X}) - \mathbf{x}  \|_2.
$$
In this work, we do not address the issue of how to efficiently evaluate $\hat{z}_{\mu} \circ \boldsymbol{\Phi}_{\mu}^{-1}$.
\end{remark}


\subsection{Numerical results}
\label{sec:numerics_data_compression}

\subsubsection{Approximation of boundary layers}
\label{sec:boundary_layers}

\subsubsection*{Problem statement}

We consider the diffusion-reaction problem:
\begin{equation}
\label{eq:md_pb_BL}
\left\{
\begin{array}{ll}
-\Delta z_{\mu} \,  + \, \mu^2 \, z_{\mu} \, = \, 0 &
{\rm in} \, \Omega=(0,1)^2 \\[2mm]
z_{\mu} \,
= \, 1 &
{\rm on} \, \Gamma_{\rm D}:=
\{  
\mathbf{x} \in \partial \Omega : \; 
x_1=0 \, {\rm or} \, x_2=0 \}, \\[2mm]
\partial_n \, z_{\mu} \,
= \, 0 &
{\rm on} \, \Gamma_{\rm N}:=
\partial \Omega \setminus \Gamma_{\rm D},
 \\
\end{array}
\right.
\end{equation}
where $\mu \in \mathcal{P}=[\mu_{\rm min}, \mu_{\rm max}]$, $\mu_{\rm min}=20, \mu_{\rm max}=200$.
For large values of $\mu$, the solution exhibits a boundary layer at $\Gamma_{\rm D}$.
By exploiting a standard argument, we can derive the variational formulation for the lifted field
$\mathring{z}_{\mu}:= z_{\mu} - 1 \in \mathcal{X}:= H_{0,\Gamma_{\rm D}}^1(\Omega)$:
\begin{equation}
\label{eq:md_pb_BL_weak}
\mathcal{G}_{\mu}(\mathring{z}_{\mu}, v    )
:=
\int_{\Omega} \, \nabla \, \mathring{z}_{\mu} \cdot \nabla  v \, dx \, + \mu^2 \, \int_{\Omega} \left(  \mathring{z}_{\mu} + 1 \right) \, v \, dx \, = \, 0
\quad
\forall \, v \in \mathcal{X}.
\end{equation}
Given the parametric mapping $\boldsymbol{\Phi}_{\mu}: \Omega \to \Omega$ such that
 $\boldsymbol{\Phi}_{\mu}(\Gamma_{\rm D})= \Gamma_{\rm D}$, 
 we find that $\widetilde{\mathring{z}}_{\mu}
= \mathring{z}_{\mu} \circ \boldsymbol{\Phi}_{\mu} \in \mathcal{X}$ satisfies:
\begin{equation}
\label{eq:md_pb_BL_weak_mapped}
\mathcal{G}_{\mu,\Phi}(\widetilde{\mathring{z}}_{\mu}, v    )
:=
\int_{\Omega} \,
\mathbf{K}_{\mu}^{\star} \, 
 \widehat{\nabla} \,
\widetilde{\mathring{z}}_{\mu}
 \cdot \widehat{\nabla} v \, dX \, +   \, \int_{\Omega} 
\sigma_{\mu}^{\star} \,  \left(  \widetilde{\mathring{z}}_{\mu} + 1 \right) \, v \, dX \, = \, 0
\quad
\forall \, v \in \mathcal{X},
\end{equation}
where $\mathbf{K}_{\mu}^{\star} =  \, \mathfrak{J}_{\mu} \, \widehat{\nabla} \boldsymbol{\Phi}_{\mu}^{-1} \, \widehat{\nabla} \boldsymbol{\Phi}_{\mu}^{-T}$ and
$\sigma_{\mu}^{\star} = \mu^2 \mathfrak{J}_{\mu}$. We here resort to a continuous P3 FE discretization with $N_{\rm hf}= 11236$ degrees of freedom on a structured triangular mesh. The grid is refined close to the boundary $\Gamma_{\rm D}$, to accurately capture the boundary layer.

\subsubsection*{Construction of the mapping}

We apply the registration procedure presented in section \ref{sec:registration} to the solution itself ($u_{\mu}= z_{\mu}$).
We set $\bar{\mu}= \sqrt{ \mu_{\rm min} \, \mu_{\rm max}  }$,
 $\xi=10^{-10}$, and we consider a polynomial expansion with   $\overline{M}=8$ in \eqref{eq:tensorized_polynomials} ($M_{\rm hf}=128$).
{We refer to section \ref{sec:NM_widths_examples} for an heuristic  motivation of the choice of $\bar{\mu}$.} 
  To build the regressor $\widehat{\mathbf{a}}:\mathcal{P} \to \mathbb{R}^M$, we consider $n_{\rm train}=70$ log-equispaced parameters in $\mathcal{P}$, 
 and we set $tol_{\rm pod}=10^{-4}$ in \eqref{eq:POD_cardinality_selection}:
 for this choice of the parameters, the procedure   returns an affine expansion with $M= 5$ terms.

{Figure  \ref{fig:BL_eigen} shows the 
behavior  of $\|  \cdot \|_2$-POD eigenvalues associated with 
$\{  \mathbf{a}_{\mu^i}^{\rm hf} \}_{i=1}^{n_{\rm train}}$, and the 
$L^2$-POD eigenvalues associated with $\{  \mathbf{K}_{\mu^i}^{\star}\}_{i=1}^{n_{\rm train}}$.
 }
 We remark that the decay of the POD eigenvalues cannot be directly related to the behavior of the Kolmogorov $N$-width; nevertheless, 
POD eigenvalues   provide an heuristic measure of the linear complexity of parametric manifolds and are thus shown in this paper to investigate  performance of the registration algorithm.
 
 \begin{figure}[h!]
\centering
 \subfloat[] {\includegraphics[width=0.4\textwidth]
 {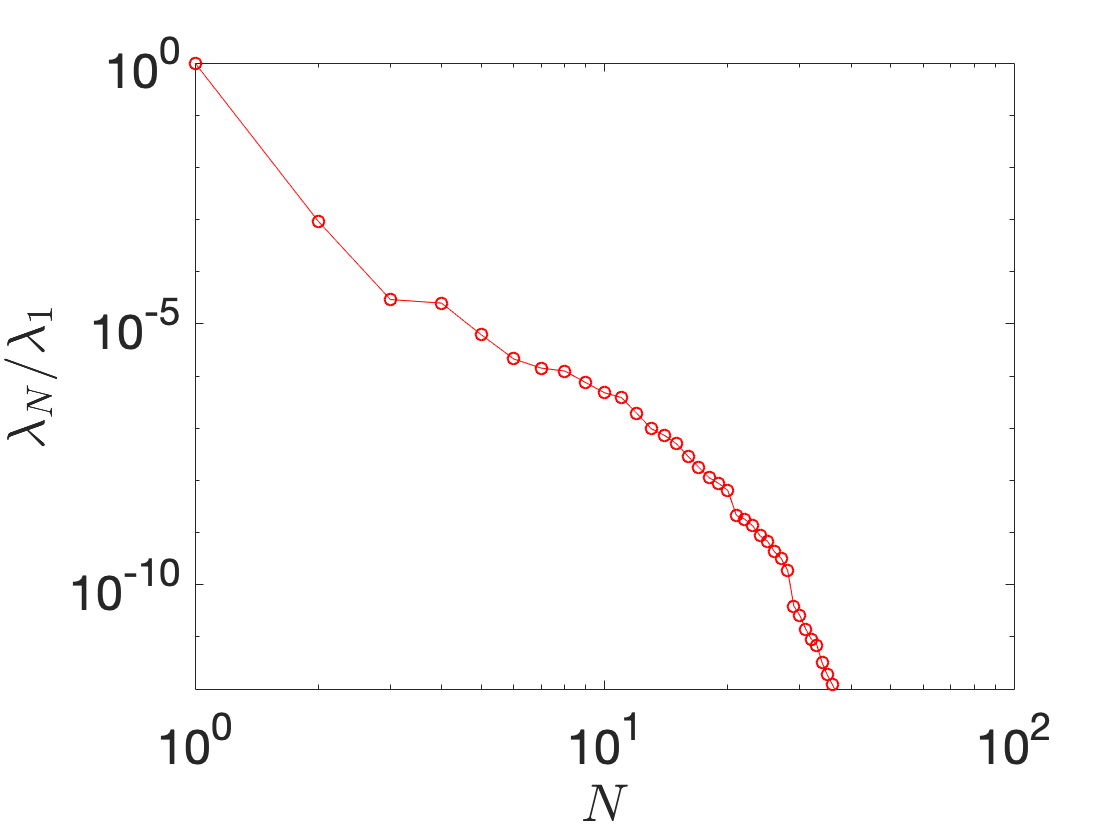}}  
     ~ ~
\subfloat[] {\includegraphics[width=0.4\textwidth]
 {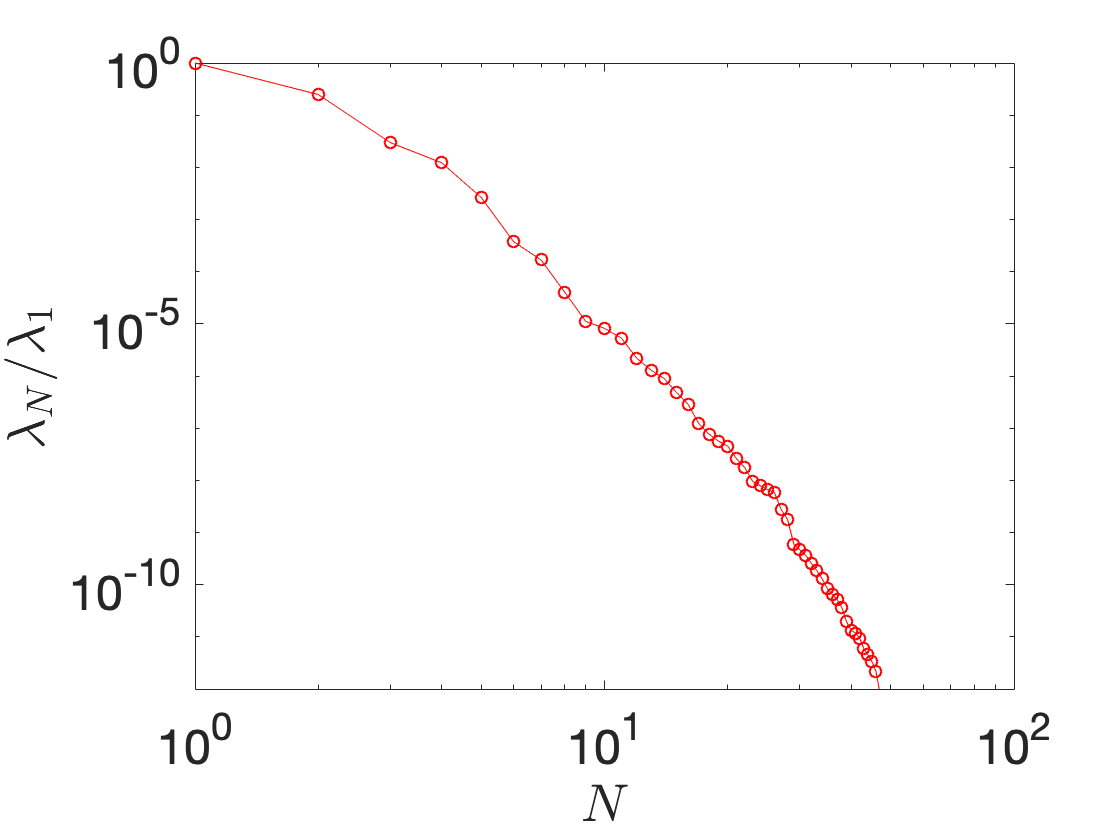}}  
 
\caption{reaction-diffusion problem with boundary layer.  Behavior  of POD eigenvalues. 
(a): 
$\|  \cdot \|_2$-POD for 
$\{  \mathbf{a}_{\mu^i}^{\rm hf} \}_{i=1}^{n_{\rm train}}$.
(b)
$L^2$-POD for $\{  \mathbf{K}_{\mu^i}^{\star} \}_{i=1}^{n_{\rm train}}$.
}
 \label{fig:BL_eigen}
\end{figure} 
 
\subsubsection*{Results}
Figure \ref{fig:BL_vis}(a) shows the behavior of the solution field $z_{\mu}$ for $\mu=20$; as anticipated above, the solution exhibits a boundary layer  close to $\Gamma_{\rm D}$. 
In Figure \ref{fig:BL_vis}(b) and (c), we show slices of the solution field 
$t \mapsto z_{\mu}(t,t)$, $t \mapsto z_{\mu}(t,0.5)$,
for three parameter values along the straight lines depicted in  
Figure \ref{fig:BL_vis}(a): as expected, the size of the boundary layer strongly depends on the value of $\mu$.
In Figure \ref{fig:BL_vis_mapped}, we reproduce the results of Figure 
\ref{fig:BL_vis} for the mapped field: we observe that the mapping procedure significantly reduces the sensitivity of the solution to the value of $\mu$.

\begin{figure}[h!]
\centering
 \subfloat[$z_{\mu}$, $\mu=20$] {\includegraphics[width=0.3\textwidth]
 {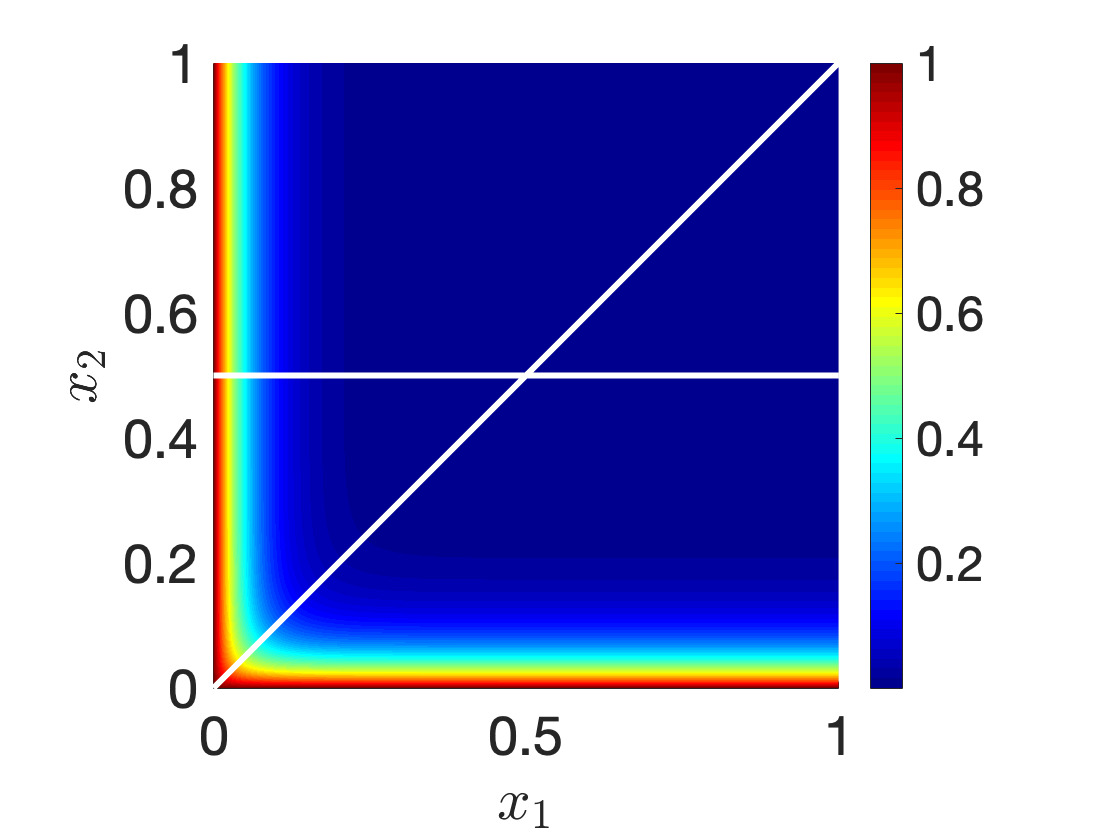}}  
    ~ ~
\subfloat[] {\includegraphics[width=0.3\textwidth]
 {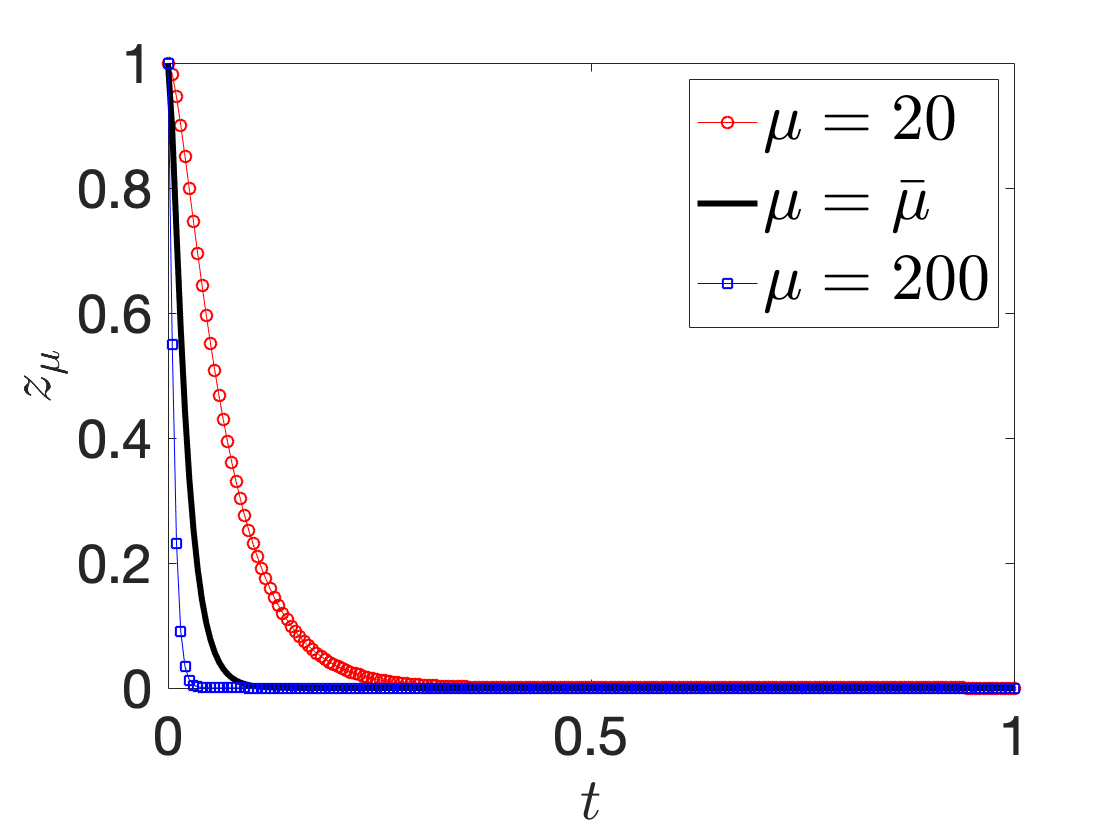}}  
     ~ ~
\subfloat[] {\includegraphics[width=0.3\textwidth]
 {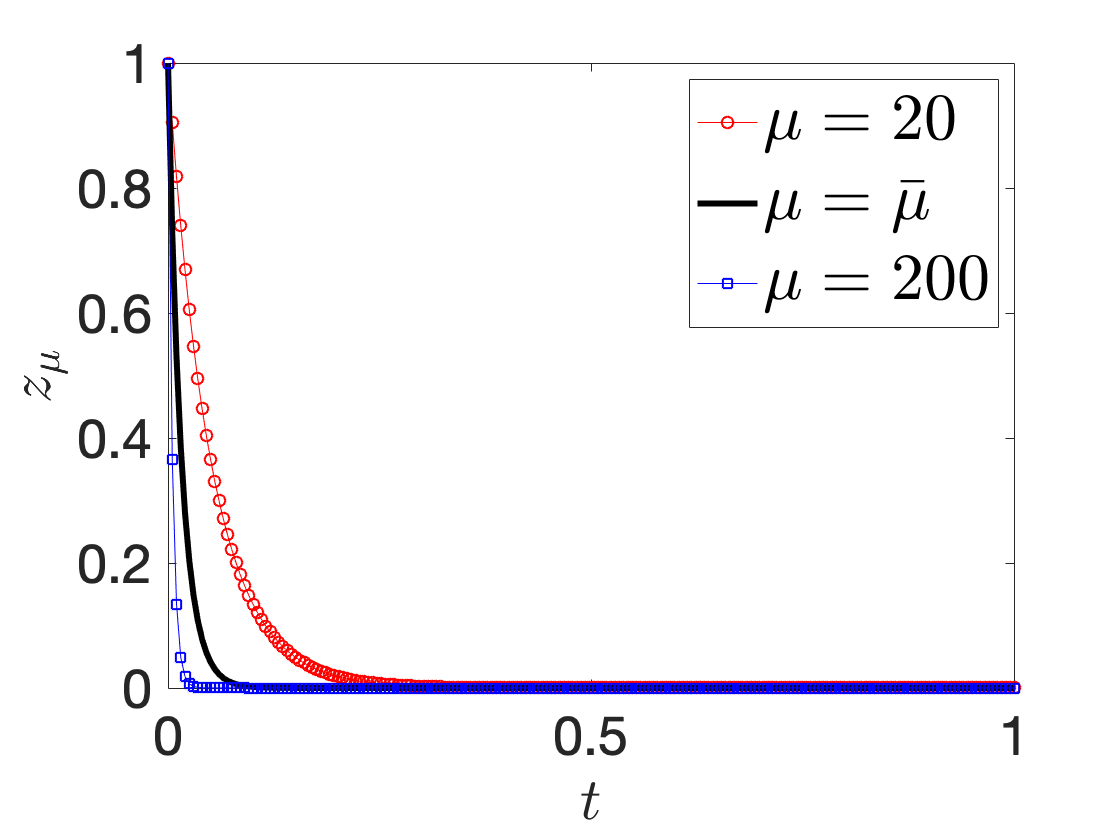}}  
 
\caption{reaction-diffusion problem with boundary layer.
 (a): behavior of $z_{\mu=20}$.
(b) and (c): behavior of  $z_{\mu}(\mathbf{x} = (t,t))$ 
and $z_{\mu}(\mathbf{x} = (t,0.5))$ 
for three values of $\mu \in \mathcal{P}$, $t \in [0,1]$.
}
 \label{fig:BL_vis}
\end{figure} 

\begin{figure}[h!]
\centering
 \subfloat[$\tilde{z}_{\mu}$, $\mu=20$] {\includegraphics[width=0.3\textwidth]
 {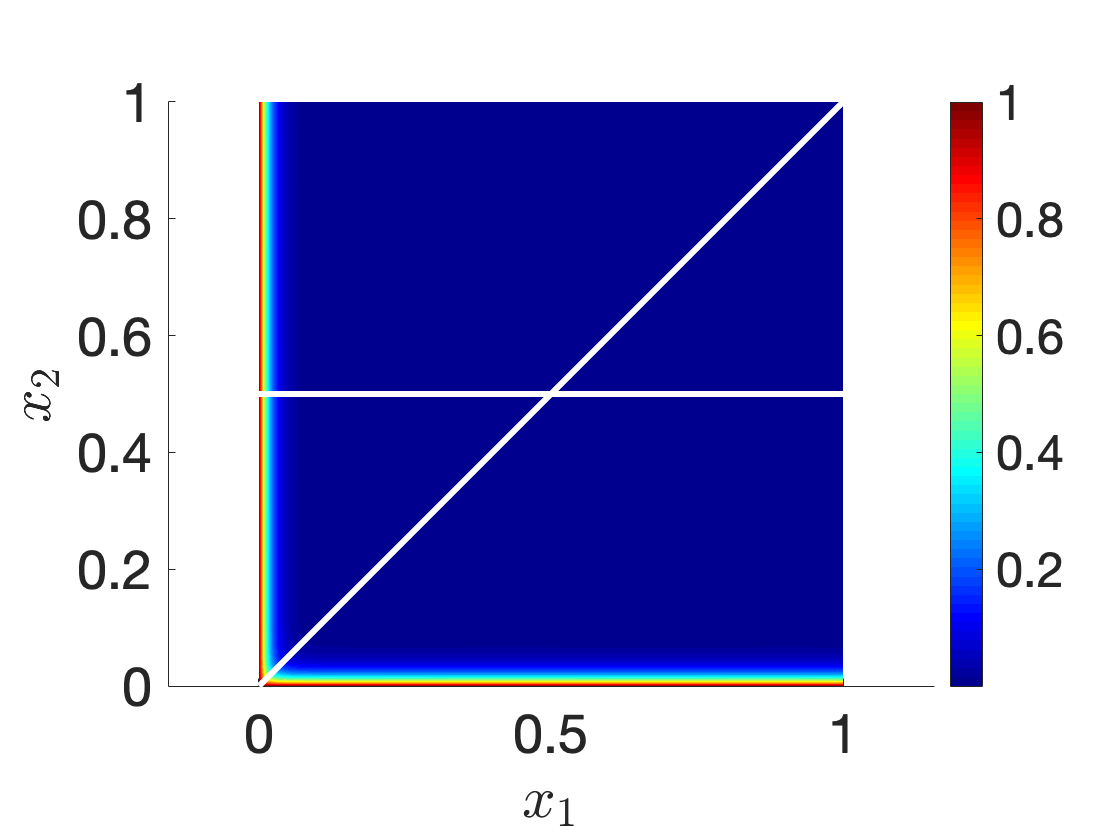}}  
    ~ ~
\subfloat[] {\includegraphics[width=0.3\textwidth]
 {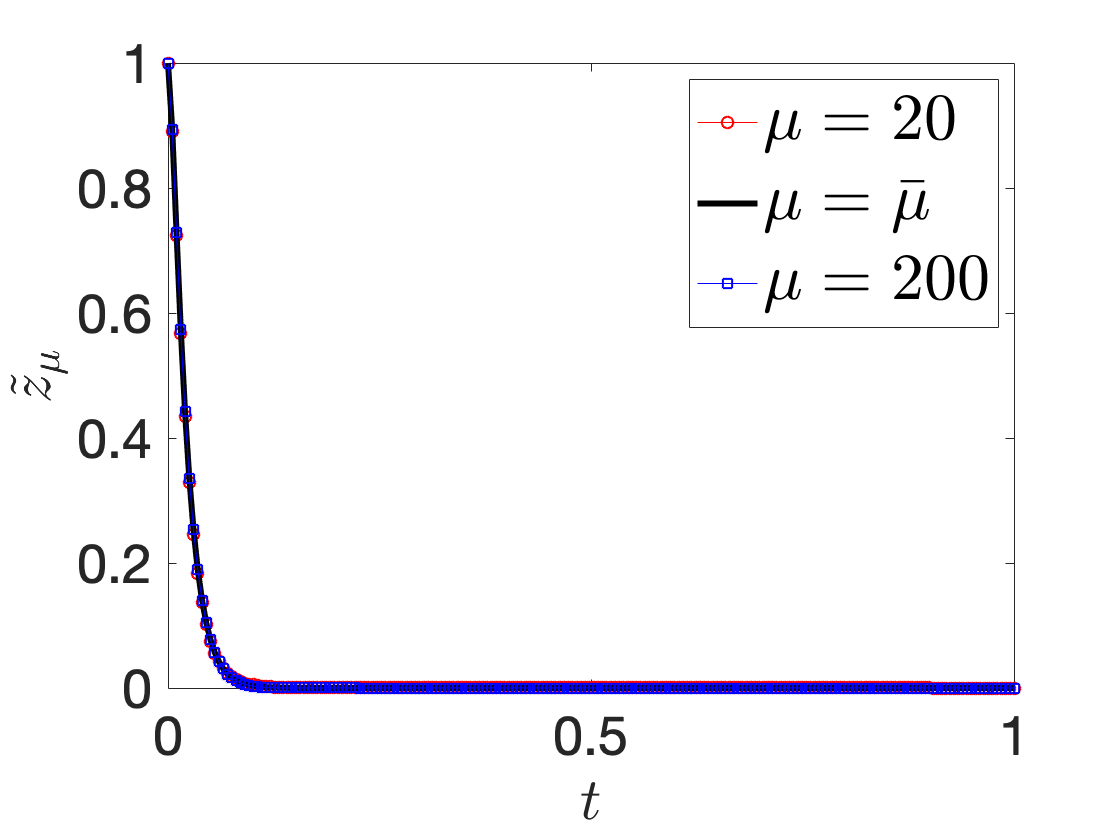}}  
     ~ ~
\subfloat[] {\includegraphics[width=0.3\textwidth]
 {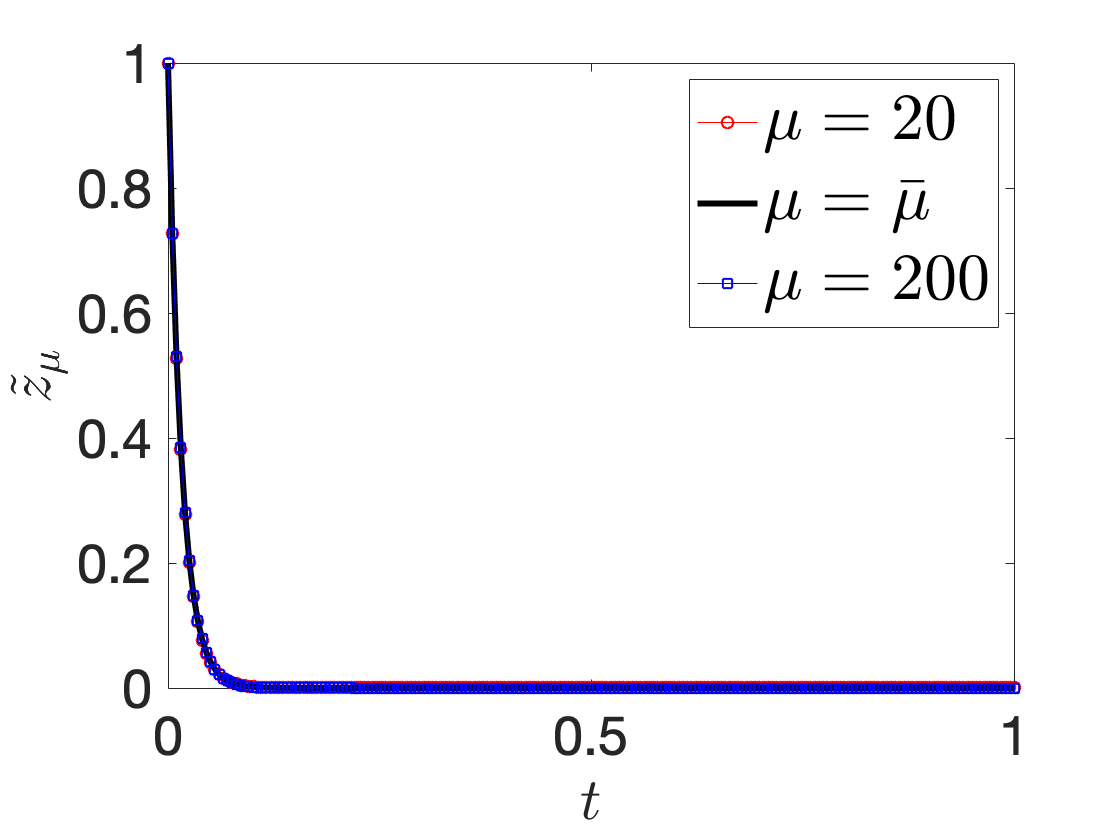}}  
 
\caption{reaction-diffusion problem with boundary layer. 
(a): behavior of $\tilde{z}_{\mu=20}$.
(b) and (c): behavior of $\tilde{z}_{\mu}(\mathbf{X} = (t,t))$ 
and $\tilde{z}_{\mu}(\mathbf{X} = (t,0.5))$
for three values of $\mu \in \mathcal{P}$, $t \in [0,1]$.
}
 \label{fig:BL_vis_mapped}
\end{figure} 

Figure \ref{fig:BL_eigen_2}(a) shows the behavior of the relative $H^1$ projection error associated with the POD space
in the physical (unregistered) and mapped (registered) configurations,
\begin{equation}
\label{eq:Eproj_BL}
E_{\rm proj}=
\max_{\mu \in \{ \mu^j \}_{j=1}^{n_{\rm test}}    }
\frac{\| z_{\mu} - \Pi_{\mathcal{Z}_N} z_{\mu}  \|_{H^1(\Omega)}}{
\| z_{\mu}   \|_{H^1(\Omega)}
},
\quad
\widetilde{E}_{\rm proj}=
\max_{\mu \in \{ \mu^i \}_{i=1}^{n_{\rm test}}    }
\frac{\| \tilde{z}_{\mu} - \Pi_{\widetilde{\mathcal{Z}_N}} \tilde{z}_{\mu}  \|_{H^1(\Omega)}}{
\| \tilde{z}_{\mu}   \|_{H^1(\Omega)}
},
\end{equation}
where $\mu^1,\ldots,\mu^{n_{\rm test}} \overset{\rm iid}{\sim} {\rm Uniform}(\mathcal{P})$ 
($n_{\rm test}=200$) and  the space $\{ \mathcal{Z}_N \}_N$ (resp., 
$\{ \widetilde{\mathcal{Z}}_N \}_N$)  is built applying  $H^1$-POD  to  the snapshot sets 
$\{ z_{\mu^i} \}_{i=1}^{n_{\rm test}}$
(resp. 
$\{ \tilde{z}_{\mu^i} \}_{i=1}^{n_{\rm test}}$).
On the other hand, Figure \ref{fig:BL_eigen}(b)
shows   the $H^1$-POD eigenvalues associated with $\{ z_{\mu^i} \}_{i=1}^{n_{\rm test}}$ (unregistered) and $\{ \tilde{z}_{\mu^i} \}_{i=1}^{n_{\rm test}}$
(registered).
Note that  $\widetilde{E}_{\rm proj} \leq  {E}_{\rm proj}$  
for $N \leq  6$, while $\widetilde{E}_{\rm proj} >  {E}_{\rm proj}$ for  $N>6$; similar behavior can be observed for the POD eigenvalues:
{to reduce the sensitivity of the solution to the value of $\mu$, the mapping process 
introduces some small-amplitude smaller spatial scale
distortions, which then ultimately control the convergence.
} Nevertheless, we do emphasize that for $N=1$  the relative  error satisfies 
$\widetilde{E}_{\rm proj} \lesssim   10^{-4}$ and  is thus comparable with the accuracy of the underlying truth discretization: as a result, the ``complexity overhead" due to mapping-induced distortions has  little practical importance for this model problem.

\begin{figure}[h!]
\centering
 \subfloat[] {\includegraphics[width=0.4\textwidth]
 {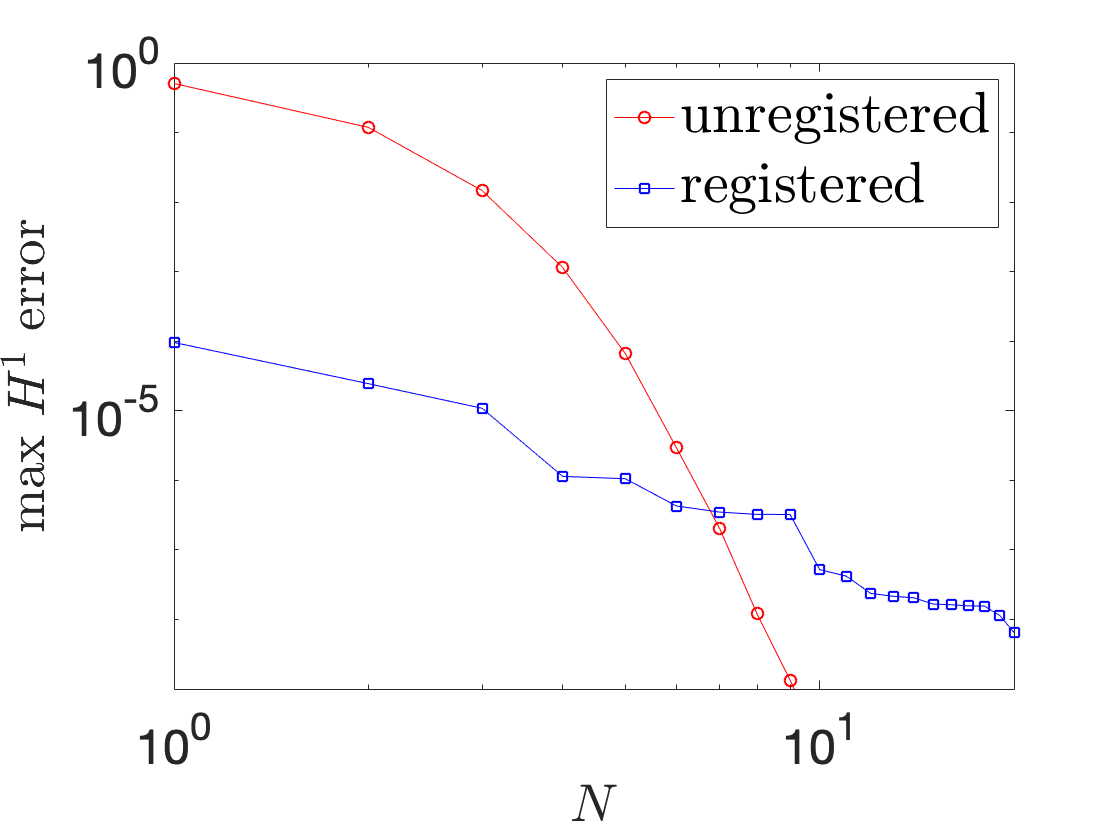}}  
    ~ ~
\subfloat[] {\includegraphics[width=0.4\textwidth]
 {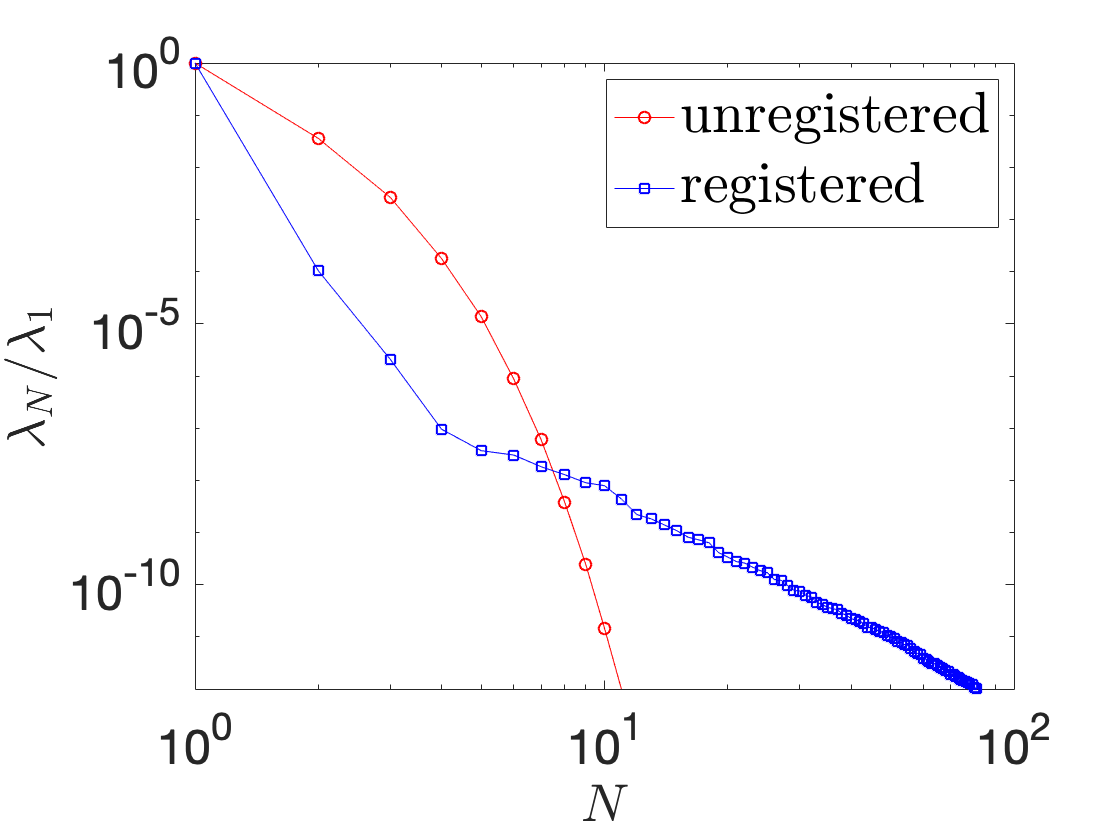}}  
 
\caption{reaction-diffusion problem with boundary layer. 
(a): behavior of $E_{\rm proj}, \widetilde{E}_{\rm proj}$
\eqref{eq:Eproj_BL} with $N$.
(b): behavior of the $H^1$-POD eigenvalues associated with 
$\{ z_{\mu^i} \}_{i=1}^{n_{\rm test}}$ (unregistered) and
$\{ \tilde{z}_{\mu^i} \}_{i=1}^{n_{\rm test}}$ (registered).
}
 \label{fig:BL_eigen_2}
\end{figure} 

{In Figure \ref{fig:BL_xi}, we investigate the impact on performance of the choice of $\xi$ in  
\eqref{eq:optimization_statement}. More in detail, we show the behavior of the proximity measure 
$\mathfrak{f}( \mathbf{a}_{\rm hf}(\mu), \mu, \bar{\mu})$ and of the mapping seminorm
$|  \boldsymbol{\Psi}_{  \mathbf{a}_{\rm hf}(\mu)    }^{\rm hf} |_{H^2(\Omega)}$ with respect to $\xi$ for three values of $\mu$ in $\mathcal{P}$, where 
$\mathbf{a}_{\rm hf}(\mu)$ denotes the solution to 
\eqref{eq:optimization_statement} for a given $\mu \in\mathcal{P}$. Interestingly, the performance is nearly independent of $\xi$ for $\xi \lesssim 10^{-8}$, for all  three choices of $\mu$.
}

\begin{figure}[h!]
\centering
 \subfloat[] {\includegraphics[width=0.4\textwidth]
 {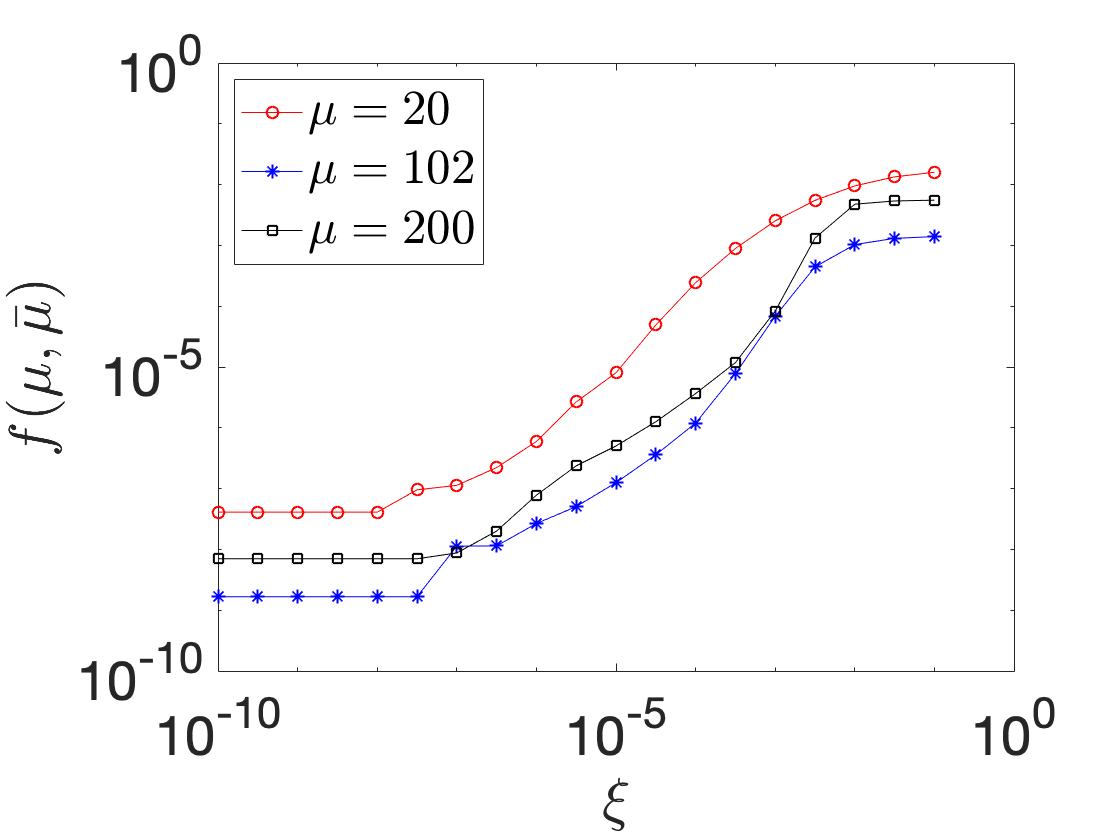}}  
    ~ ~
\subfloat[] {\includegraphics[width=0.4\textwidth]
 {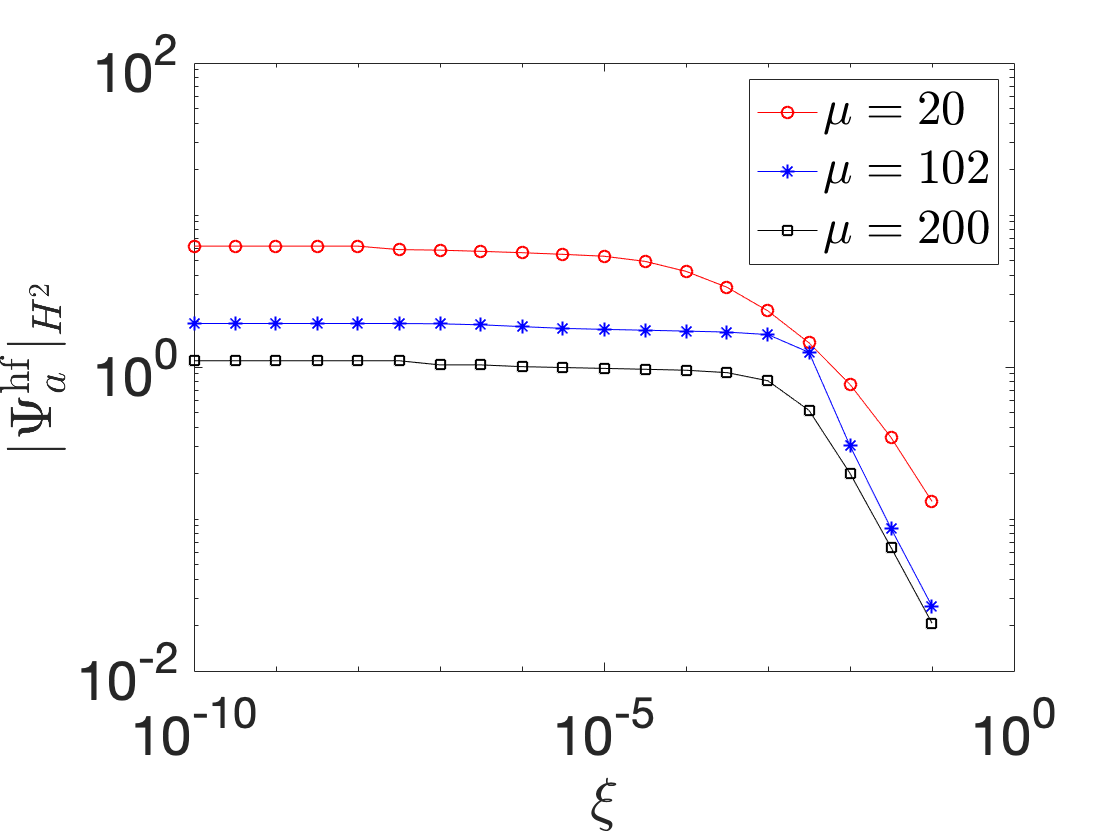}}  
 
\caption{reaction-diffusion problem with boundary layer. Sensitivity analysis with respect to $\xi$ in
\eqref{eq:optimization_statement} for three values of $\mu \in \mathcal{P}$.
(a): behavior of the optimal proximity measure 
$\mathfrak{f}( \mathbf{a}_{\rm hf}(\mu), \mu, \bar{\mu})$ with $\xi$.
(b): behavior of the mapping $H^2$ seminorm with $\xi$.
}
 \label{fig:BL_xi}
\end{figure} 

{In Figure \ref{fig:BL_barmu},  we show  the behavior of the proximity measure 
$\mathfrak{f}( \mathbf{a}_{\rm hf}(\mu), \mu, \bar{\mu})$ and of the mapping seminorm
$|  \boldsymbol{\Psi}_{  \mathbf{a}_{\rm hf}(\mu)    }^{\rm hf} |_{H^2(\Omega)}$ with respect to $\mu$,  for two choices of the reference parameter, $\bar{\mu}= \sqrt{\mu_{\rm min} \mu_{\rm max}}$ and $\bar{\mu}= \frac{\mu_{\rm min} + \mu_{\rm max}}{2}$. 
For the second choice, we chose $\epsilon=0.05$.  We observe that the first choice leads to superior performance in terms of accuracy  and also reduces the maximum (over $\mathcal{P}$) magnitude of the mapping seminorm: similarly to 
\cite{mowlavi2018model}, we empirically observe that the choice of the reference field (\emph{template})  $\bar{u}$ has a significant impact on performance.
}

\begin{figure}[h!]
\centering
 \subfloat[] {\includegraphics[width=0.4\textwidth]
 {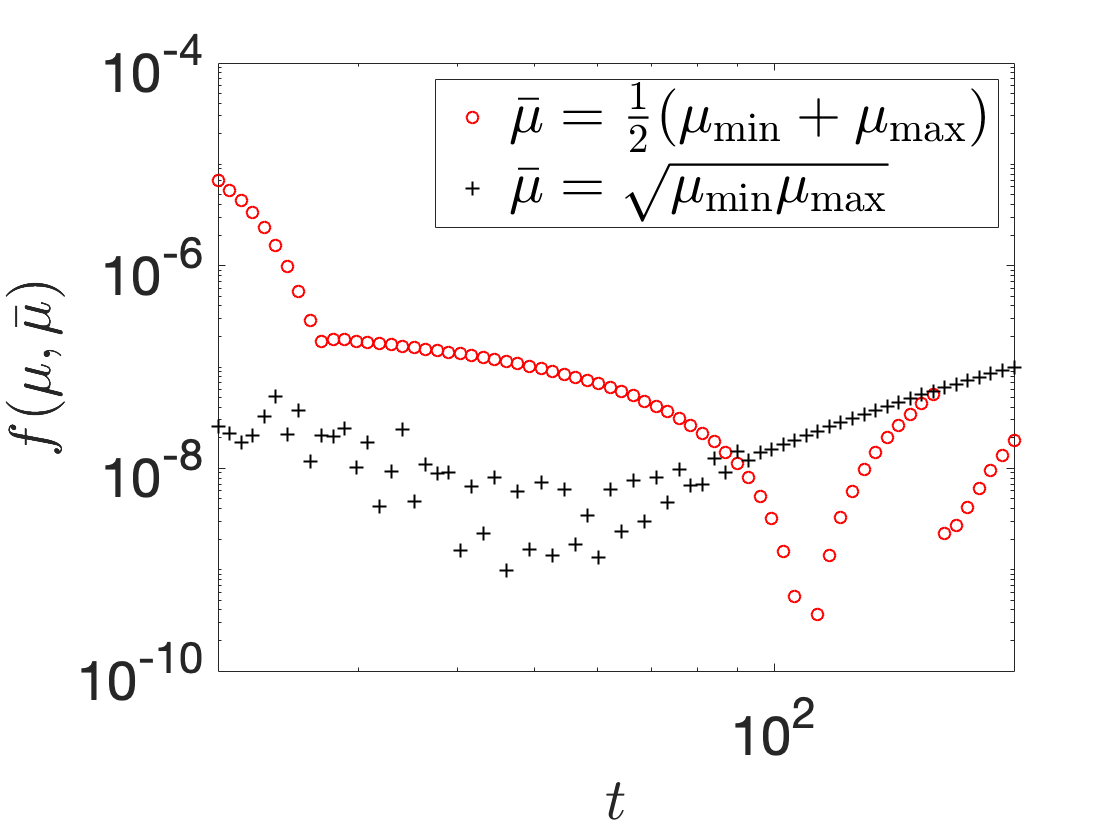}}  
    ~ ~
\subfloat[] {\includegraphics[width=0.4\textwidth]
 {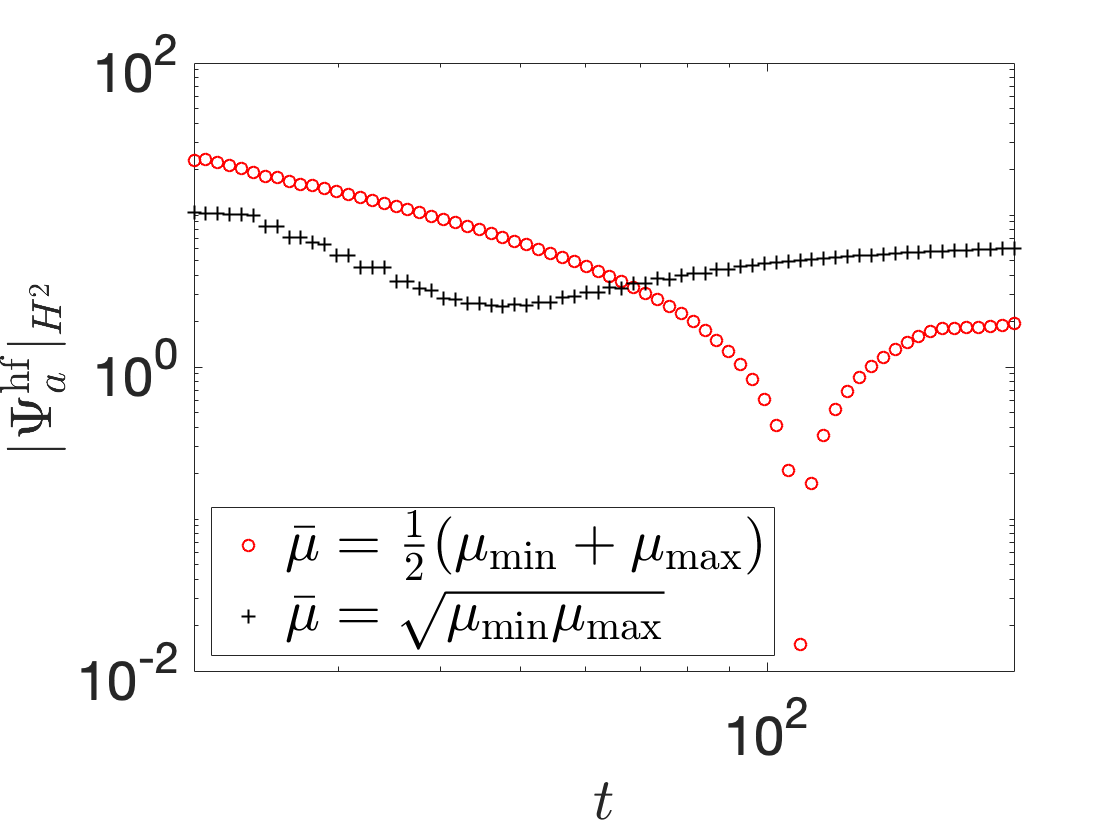}}  
 
\caption{reaction-diffusion problem with boundary layer.
 Sensitivity analysis with respect to the reference field $\bar{u}=u_{\bar{\mu}}$.
(a): behavior of the optimal proximity measure 
$\mathfrak{f}( \mathbf{a}_{\rm hf}(\mu), \mu, \bar{\mu})$ with $\mu$, for two choices of $\bar{\mu}$.
(b): behavior of the mapping $H^2$ seminorm with $\mu$,
for two choices of $\bar{\mu}$.
}
 \label{fig:BL_barmu}
\end{figure}

 \subsubsection{Approximation of advection-dominated problems}
\label{sec:hyperbolic}

\subsubsection*{Problem statement}
We consider the advection-reaction  problem:
\begin{subequations}
\label{eq:model_problem_strong}
\begin{equation}
\left\{
\begin{array}{ll}
 \nabla \cdot ( \mathbf{c}_{\mu}  z_{\mu} ) \, + \,
\sigma_{\mu} z_{\mu}   = f_{\mu}  & {\rm in} \, \Omega = (0,1)^2 \\
z_{\mu} = z_{\rm D,\mu} & {\rm on} \, \Gamma_{\rm in,\mu}:= \{ \mathbf{x} \in \partial \Omega: \, \mathbf{c}_{\mu} \cdot \mathbf{n} < 0  \}
\\
\end{array}
\right.
\end{equation}
where $\mathbf{n}$ denotes the outward normal to $\partial \Omega$, and
\begin{equation}
\begin{array}{l}
\displaystyle{
\mathbf{c}_{\mu} = \left[
\begin{array}{c}
\cos(\mu_1) \\
\sin(\mu_1) \\
\end{array}
\right],
\quad
\sigma_{\mu} = 1 + \mu_2 e^{x_1 + x_2},
\quad
f_{\mu}= 1 + x_1 x_2,
}\\[3mm]
\displaystyle{
z_{\rm D, \mu} = 4 \arctan \left( \mu_3 \left( x_2 - \frac{1}{2} \right) \right) \,
\left(   x_2 -  x_2^2 \right)
}
\\[3mm]
\displaystyle{
\mu
=[\mu_1,\mu_2,\mu_3] \in \mathcal{P}:=
 \left[ -\frac{\pi}{10},  \frac{\pi}{10} \right] \times
 \left[ 0.3,  0.7 \right] \times
  \left[60,100\right].
 }
\end{array}
\end{equation}
\end{subequations}
Problem \eqref{eq:model_problem_strong} is a parametric hyperbolic advection-reaction problem; since $\sigma_{\mu} + \frac{1}{2} \nabla \cdot \mathbf{c}_{\mu} \geq 1 > 0$ for all $\mathbf{x} \in \Omega$, $\mu \in \mathcal{P}$, there exists a unique solution $z_{\mu}$ to \eqref{eq:model_problem_strong} for all $\mu \in \mathcal{P}$.
 We refer to \cite{brunken2019parametrized,dahmen2012adaptive} for a mathematical analysis of the problem, and for the derivation of the infinite-dimensional variational form.
We here resort to a  DG  P3  discretization on a structured triangular mesh 
with $N_{\rm hf}=23120$ degrees of freedom
to approximate \eqref{eq:model_problem_strong}; we denote by $\mathcal{X}$ the broken DG space equipped with the inner product  
$(w,v):= \sum_{k=1}^{n_{\rm el}} \, 
\int_{\texttt{D}^k} \, w \, v \, dx,$  and the induced norm
$\| \cdot \| = \sqrt{(\cdot, \cdot)}$.
Note that changes in $\mu_1$ lead to changes in the large-gradient region associated with the solution $z_{\mu}$ (cf. Figures \eqref{fig:visu_hyp}(a) and (b)), and are thus critical for linear approximation methods.

Exploiting the reasoning in section \ref{sec:offline_online_pMOR}, we can derive the variational formulation for the mapped field $\tilde{z}_{\mu}$. We provide the explicit expressions of the terms in \eqref{eq:ADR_FEM_general} 
  in section \ref{sec:DG_hyp_app}.
Here, we remark that 
 the size of the expansions in \eqref{eq:eim_ADR}  are chosen based on the  criterion in \eqref{eq:POD_cardinality_selection}, with  tolerance ${ tol}_{\rm eim}= 5 \cdot 10^{-7}$.

\subsubsection*{Construction of the mapping}

We set $\bar{\mu}$ equal to the centroid of $\mathcal{P}$, and we consider the proximity measure $\mathfrak{f}$ in \eqref{eq:L2obj} with $u_{\mu}=z_{\mu}$.
We further consider $\xi=10^{-3}$, and we consider a polynomial expansion with   $\overline{M}=6$ in \eqref{eq:tensorized_polynomials} ($M_{\rm hf}=72$).
To build the regressor $\widehat{\mathbf{a}}:\mathcal{P} \to \mathbb{R}^M$, we consider $n_{\rm train}=250$ uniformly-sampled parameters in $\mathcal{P}$, 
 and we set $tol_{\rm pod}=10^{-4}$ in \eqref{eq:POD_cardinality_selection}:
 for this choice of the parameters, the registration procedure  returns an affine expansion with $M = 12$ terms.

As opposed to the  previous problem, we do not expect that the mapping procedure will lead to a nearly one-dimensional mapped manifold; nevertheless, we do expect that the mapping procedure will reduce the sensitivity of the solution to parametric changes, particularly in $\mu_1$ --- which regulates the position of the high-gradient region in $\Omega$. For this reason, we here consider a larger value of the regularization parameter $\xi$ compared to the previous two examples: larger values of $\xi$  lead to smoother mappings and thus reduce the risk of overfitting and   facilitate hyper-reduction. We further consider a lower value of  $M_{\rm hf}$ ($M_{\rm hf}=72$ as opposed to 
$M_{\rm hf}=128$) compared to the previous example: in our numerical experience, 
the algorithm is insensitive to the choice of $M_{\rm hf}$, for $M_{\rm hf} \geq 72$.

\subsubsection*{Results}

Figure \ref{fig:visu_hyp} shows the solution to \eqref{eq:model_problem_strong} for three values of the parameters $\mu^1= [-\pi/10,0.3,60]$,
$\mu^2= [\pi/10,0.7,100]$ and the centroid 
$\bar{\mu}=[0,0.55,80]$; on the other hand, Figure \ref{fig:vistildeu_hyp} shows the solution to the mapped problem $\tilde{z}_{\mu}=z_{\mu} \circ \boldsymbol{\Phi}_{\mu}$ for the same three values of the parameter.
As stated above, changes in $\mu_1$ lead to changes in the large-gradient region which corresponds to the propagation of the inflow boundary condition in the interior of $\Omega$: the mapping $\boldsymbol{\Phi}_{\mu}$ significantly reduces the sensitivity of the solution to changes in the angle $\mu_1$.

\begin{figure}[h!]
\centering
 \subfloat[$z_{\mu^1}$] {\includegraphics[width=0.3\textwidth]
 {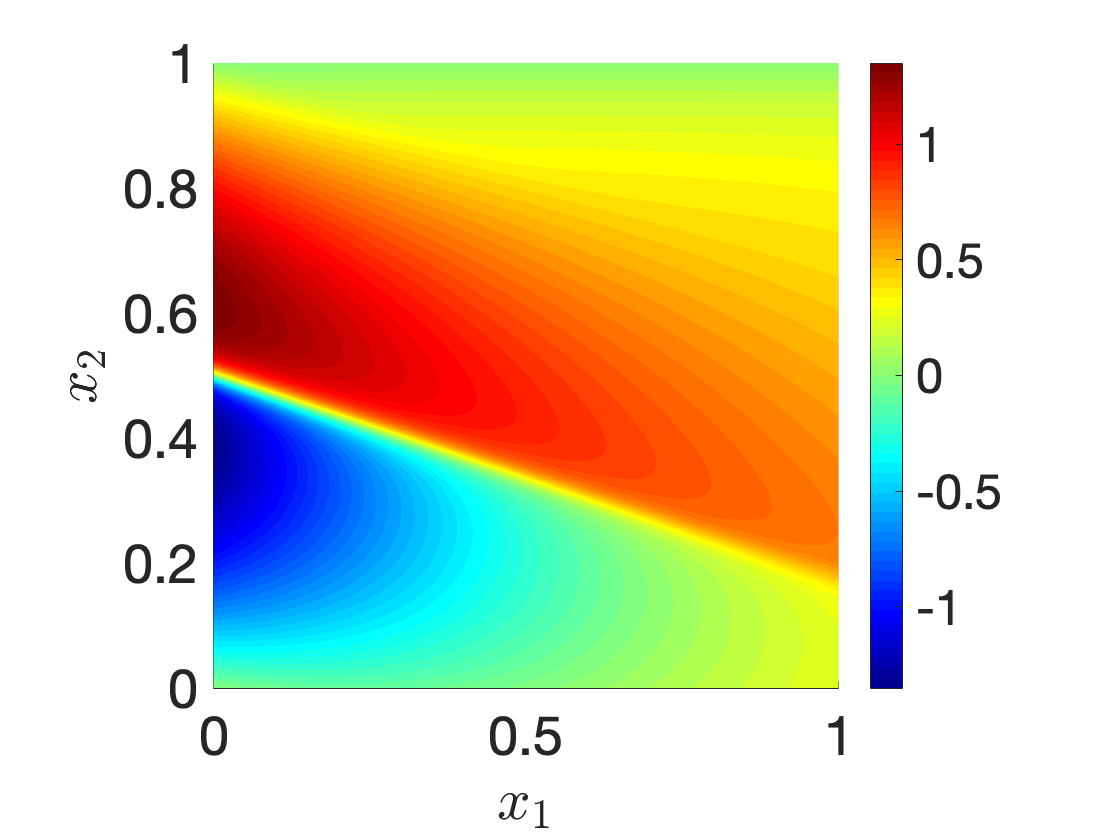}}  
    ~ ~
\subfloat[$z_{\mu^2}$] {\includegraphics[width=0.3\textwidth]
 {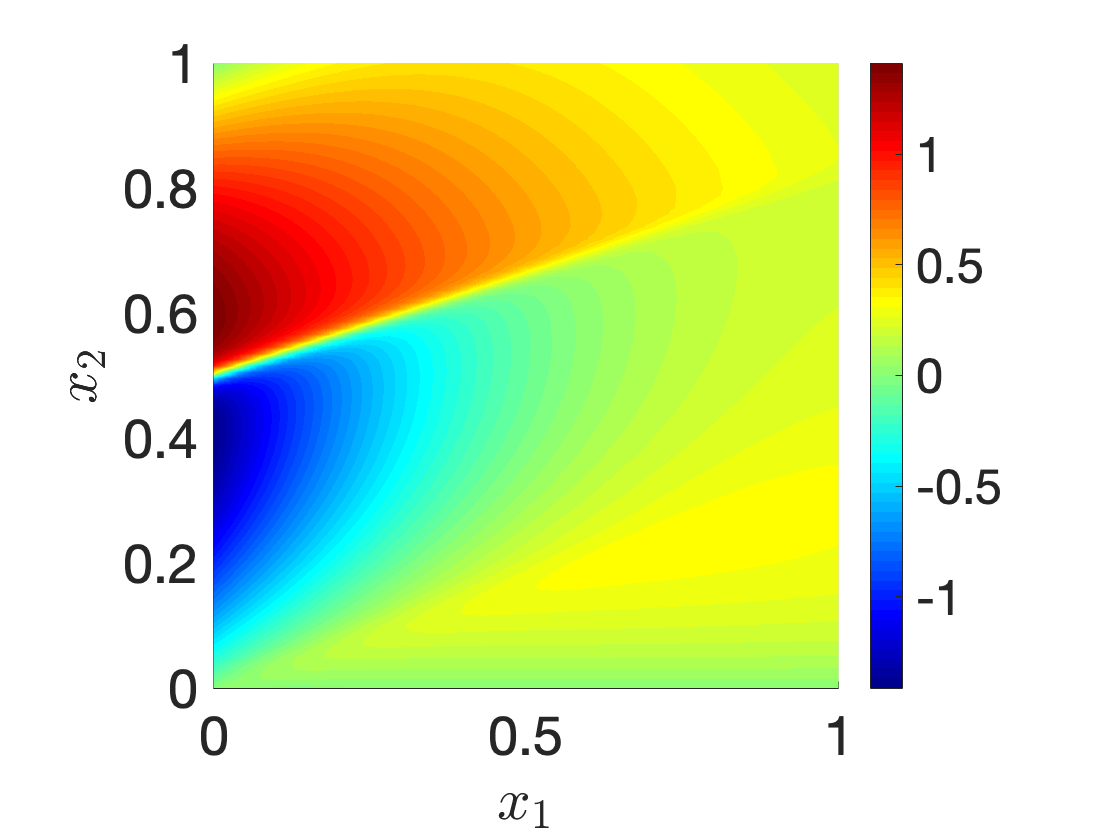}}  
     ~ ~
\subfloat[$z_{\bar{\mu}}$] {\includegraphics[width=0.3\textwidth]
 {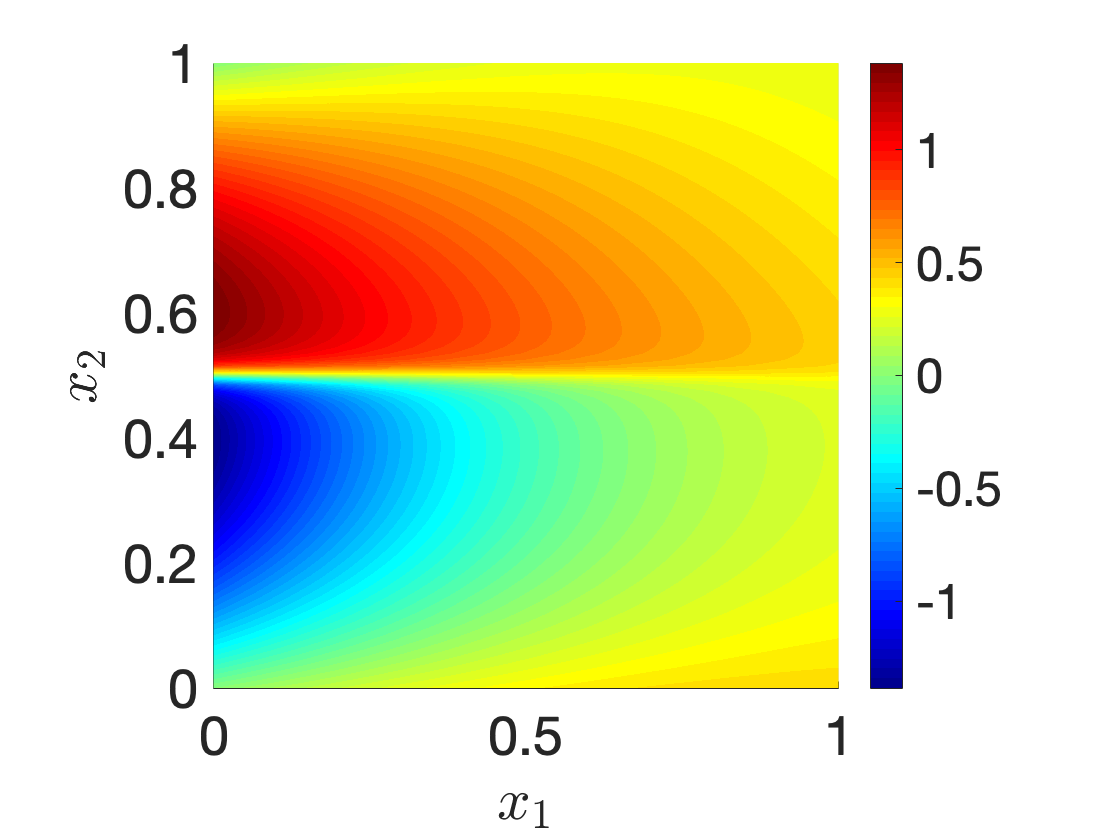}}  
 
\caption{advection-reaction problem. Solution to 
\eqref{eq:model_problem_strong} fo three parameter values.
}
 \label{fig:visu_hyp}
\end{figure} 

\begin{figure}[h!]
\centering
 \subfloat[$\tilde{z}_{\mu^1}$] {\includegraphics[width=0.3\textwidth]
 {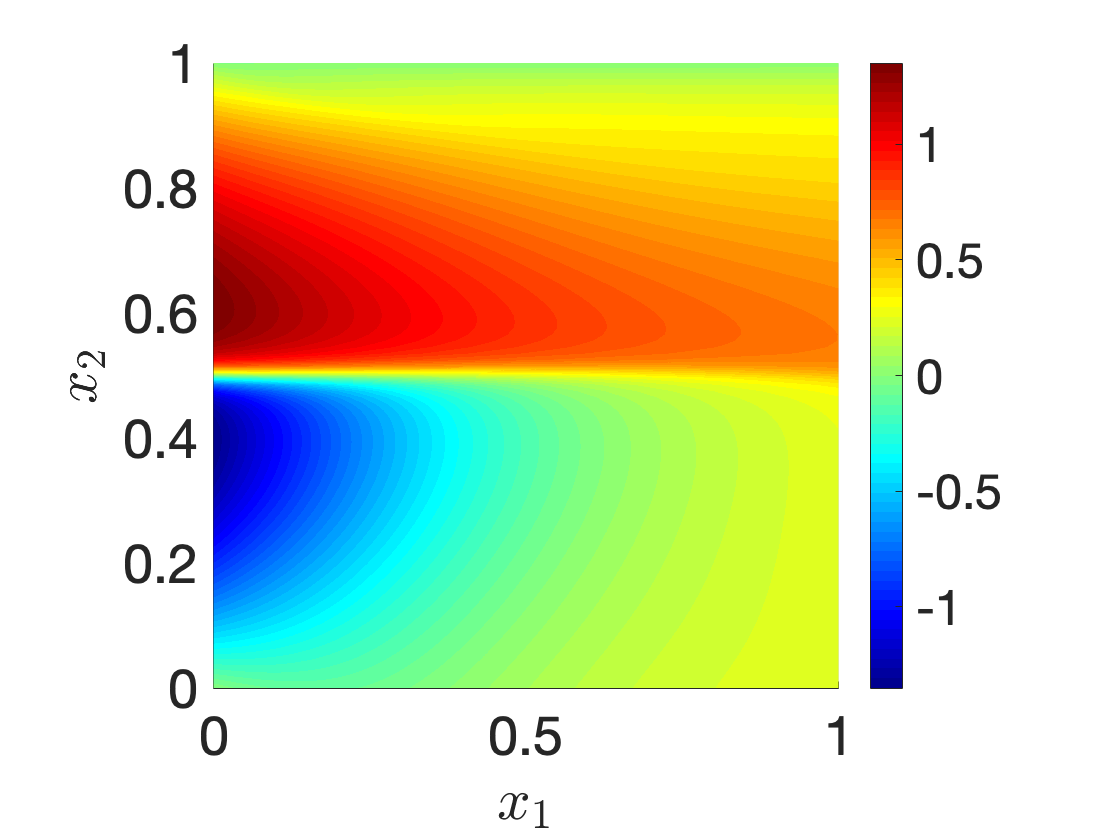}}  
    ~ ~
\subfloat[$\tilde{z}_{\mu^2}$] {\includegraphics[width=0.3\textwidth]
 {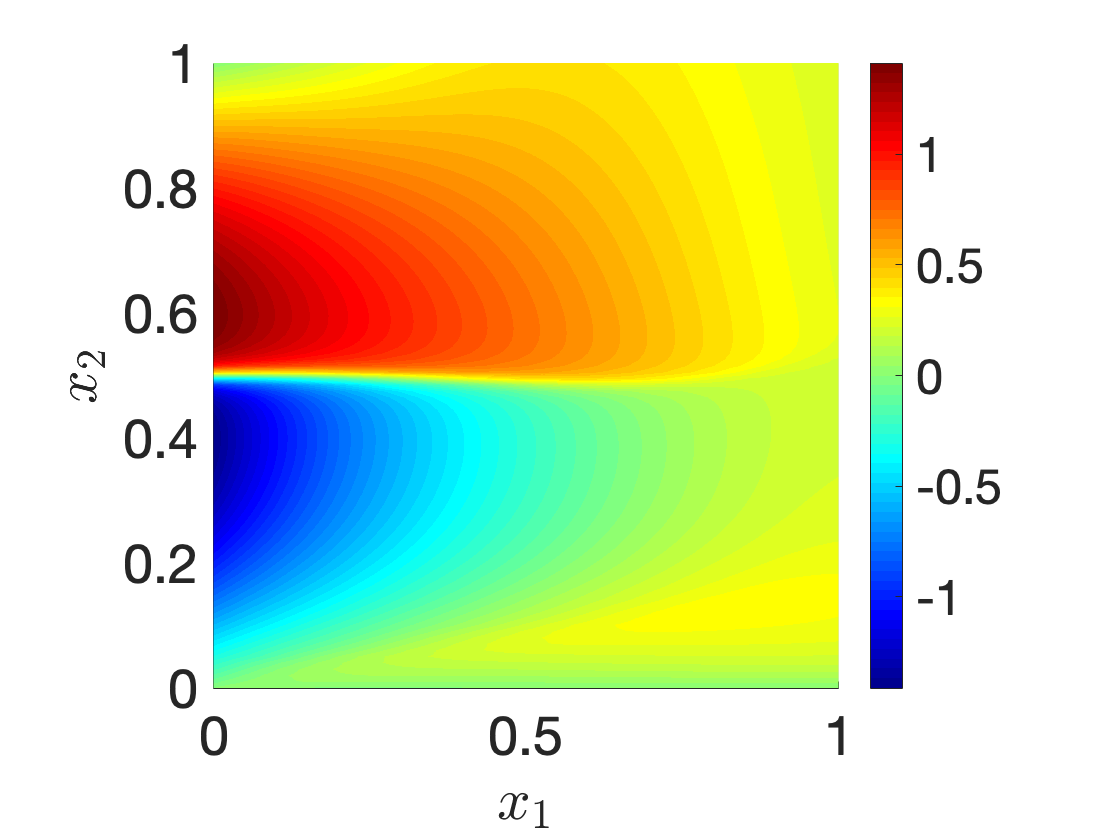}}  
     ~ ~
\subfloat[$z_{\bar{\mu}}$] {\includegraphics[width=0.3\textwidth]
 {imm/hyp/baru.png}}  
 
\caption{advection-reaction problem. Solution to 
\eqref{eq:model_problem_strong} fo three parameter values in the mapped configuration.
}
 \label{fig:vistildeu_hyp}
\end{figure} 

Figure \ref{fig:error_hyp}(a) shows the behavior of the average relative $L^2$ error 
$$
E_{\rm avg} := \frac{1}{n_{\rm test}} \,
\sum_{i=1}^{n_{\rm test}} \; 
\frac{
\| z_{\mu^i}  - \widehat{z}_{\mu^i}   \|_{L^2(\Omega)}
}{ \| z_{\mu^i}    \|_{L^2(\Omega)} },
$$
for the $L^2$-POD-Galerkin ROM associated with the unregistered configuration, and for the $L^2$-POD-Galerkin ROM associated with the  registered configuration. Here, $\mu^1,\ldots$, $\mu^{n_{\rm test}}$ $\overset{\rm iid}{\sim} {\rm Uniform}(\mathcal{P})$, $n_{\rm test}=20$.  
In order to use the same metric for both registered and unregistered ROMs,  for the registered case we compute the error
as
$$
\| z_{\mu}  - \widehat{z}_{\mu} \circ \boldsymbol{\Phi}_{\mu}^{-1}  \|_{L^2(\Omega)}
=
\sqrt{
\int_{\Omega} \, 
\mathfrak{J}_{\mu} \, \left( \widetilde{z}_{\mu} - \widehat{z}_{\mu} \right)^2 \, dX
}.
$$
Figure \ref{fig:error_hyp}(b) shows the $L^2(\Omega)$-POD eigenvalues associated with $\{ z_{\mu^i}  \}_{i=1}^{n_{\rm train}}$ (unregistered) and $\{ \tilde{z}_{\mu^i}  \}_{i=1}^{n_{\rm train}}$  (registered).

\begin{figure}[h!]
\centering
 \subfloat[] {\includegraphics[width=0.4\textwidth]
 {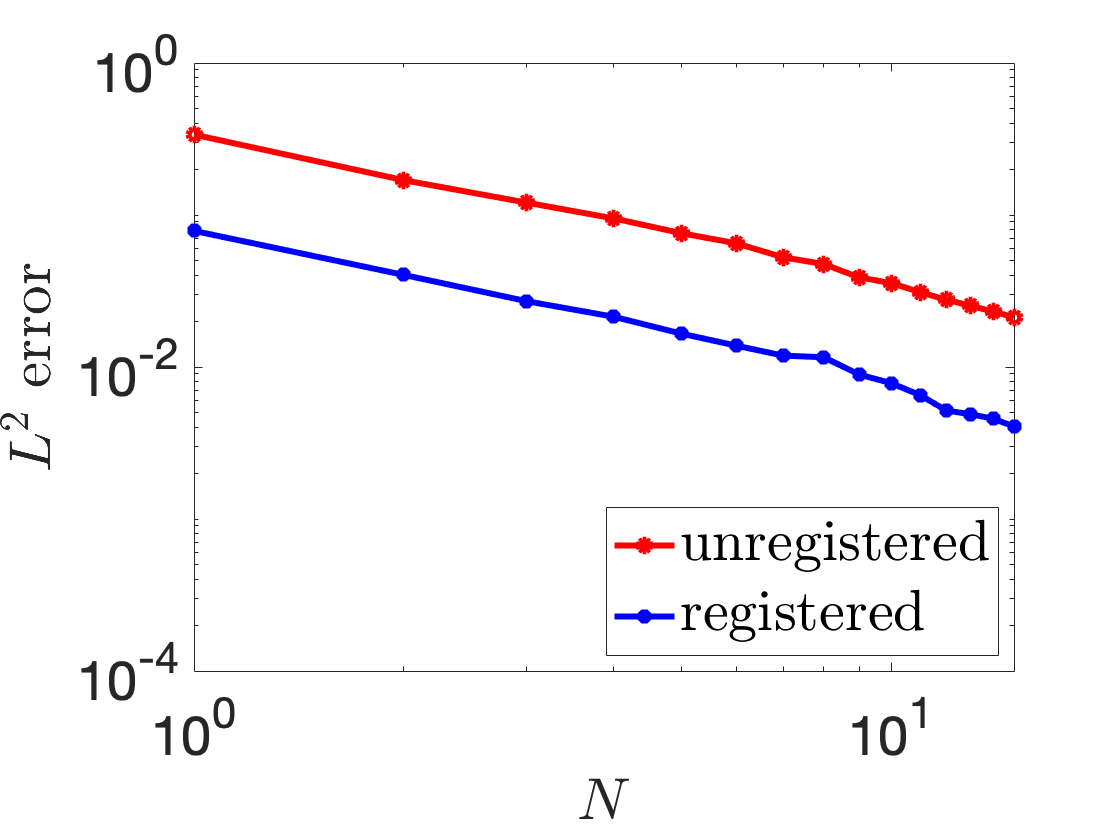}}  
    ~ ~
\subfloat[] {\includegraphics[width=0.4\textwidth]
 {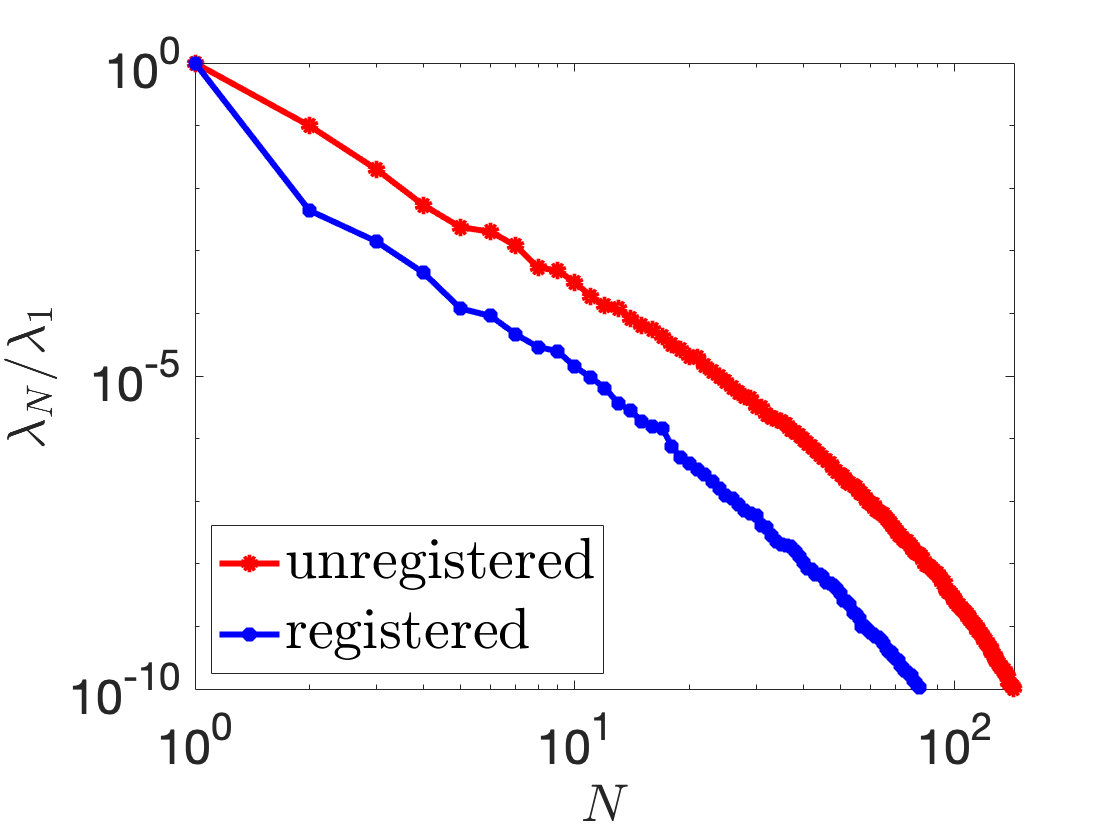}}  
 
\caption{advection-reaction problem.
(a): behavior of $E_{\rm avg}$ with $N$ for the registered and unregistered POD-Galerkin ROMs.
(b): behavior  of $L^2(\Omega)$-POD eigenvalues associated with $\{ z_{\mu^i}  \}_{i=1}^{n_{\rm train}}$ (unregistered) and $\{ \tilde{z}_{\mu^i}  \}_{i=1}^{n_{\rm train}}$  (registered).
}
 \label{fig:error_hyp}
\end{figure} 

We observe that the decay rates of $E_{\rm avg}$ and of the POD eigenvalues are nearly the same for both registered and unregistered configurations; however, the multiplicative constant is significantly different: 
$E_{\rm avg}^{\rm reg.} \approx \frac{1}{4} E_{\rm avg}^{\rm unreg.}$ for all values of $N$ considered, while
$
\left( \frac{\lambda_N}{\lambda_1}  \right)^{\rm reg.} \approx \frac{1}{100} 
\left( \frac{\lambda_N}{\lambda_1}  \right)^{\rm unreg.}
$ for $N \geq 2$. As a result, for any given $N$, the nonlinear mapping procedure leads to a significant improvement.
{This empirical observation suggests a multiplicative effect between $N$ and $M$ approximations and is thus in good agreement with the estimate of the Kolmogorov $N-M$ width  in \eqref{eq:NM_2D}.}
 On the other hand, we remark that  the mapping procedure leads to a significant increase in the number of EIM modes in \eqref{eq:eim_ADR}. More in detail,  for the registered case, we have
$$
Q_{\rm a, el} = 19, \quad
Q_{\rm a, ed} = 26, \quad
Q_{\rm f, el} = 15, \quad
Q_{\rm f, ed} = 6;
$$
for the unregistered case, we have
$$
Q_{\rm a, el} = 4, \quad
Q_{\rm a, ed} = 3, \quad
Q_{\rm f, el} = 1, \quad
Q_{\rm f, ed} = 2.
$$
Therefore, for any given $N$, the registered ROM is more expensive in terms of memory than the unregistered ROM.
{This behavior of EIM can be explained by observing that  most coefficients in \eqref{eq:model_problem_strong} are parametrically affine in the unregistered configuration.}

{In Figure \ref{fig:hyp_xi}, we investigate the effect  of the choice of $\xi$ in  \eqref{eq:optimization_statement}. 
Figures \ref{fig:hyp_xi}(a) and (b) show the 
  behavior of the proximity measure 
$\mathfrak{f}( \mathbf{a}_{\rm hf}(\mu), \mu, \bar{\mu})$ and of the mapping seminorm
$|  \boldsymbol{\Psi}_{  \mathbf{a}_{\rm hf}(\mu)    }^{\rm hf} |_{H^2(\Omega)}$ with respect to $\xi$ for three values of $\mu$ in $\mathcal{P}$, where 
$\mathbf{a}_{\rm hf}(\mu)$ denotes the solution to 
\eqref{eq:optimization_statement} for a given $\mu \in\mathcal{P}$.
Figure \ref{fig:hyp_xi}(c) shows the decay of the mapping
$\| \cdot \|_2$-POD  eigenvalues   associated with 
$\{ \mathbf{a}_{\mu^i}^{\rm hf}   \}_{i=1}^{n_{\rm train}}$ for two choices of $\xi$.
Similarly to the previous test case, 
both addends of the objective function converge to finite values for $\xi \to 0^+$; however, their numerical values and the threshold below which curves exhibit a plateau   differ significantly between the two test cases.
Furthermore, we observe that reducing $\xi$ leads to an increase of the complexity of the mapping manifold and ultimately increases the complexity of the generalization (cf. section \ref{sec:generalization}) step.
}

\begin{figure}[h!]
\centering
 \subfloat[] {\includegraphics[width=0.3\textwidth]
 {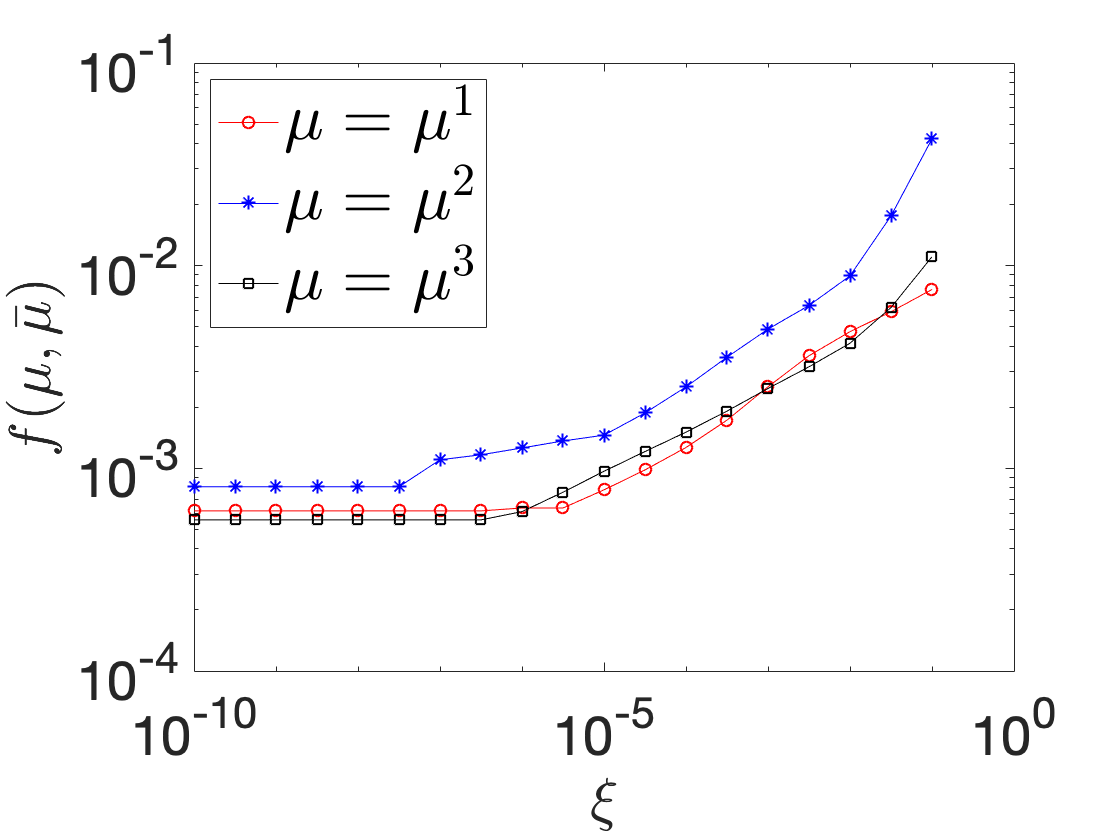}}  
    ~ ~
\subfloat[] {\includegraphics[width=0.3\textwidth]
 {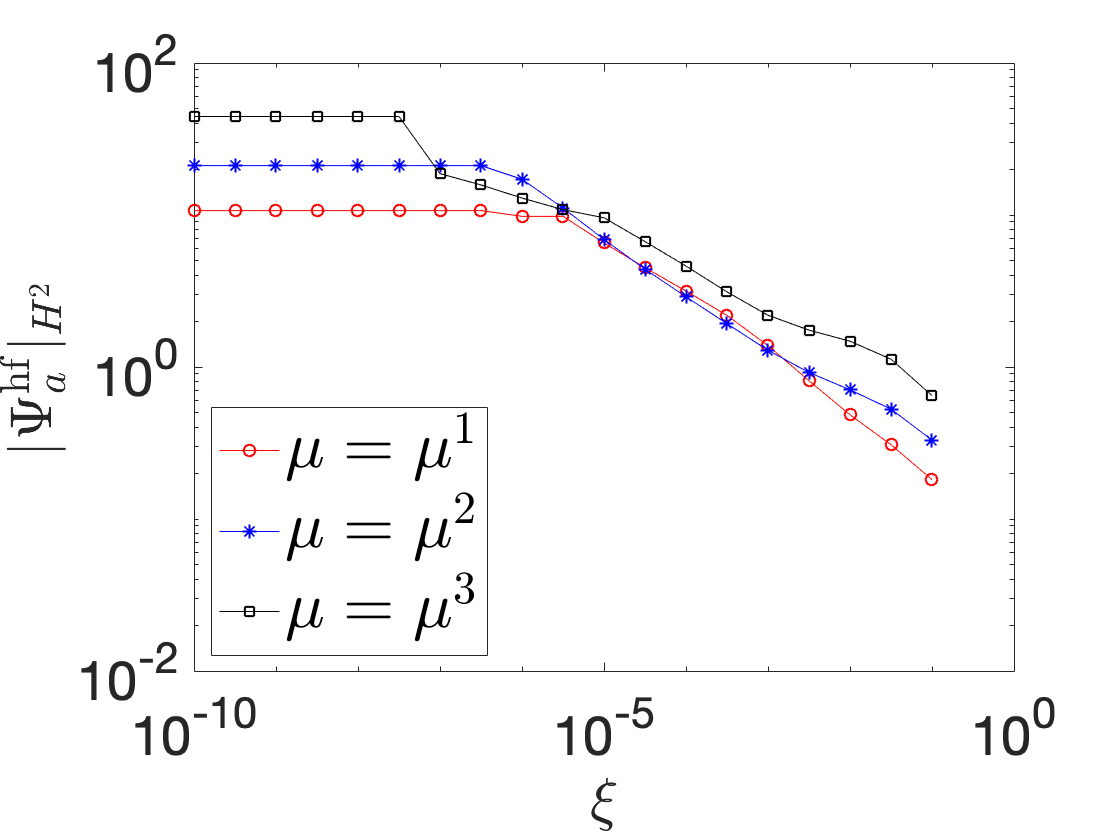}}  
     ~ ~
\subfloat[] {\includegraphics[width=0.3\textwidth]
 {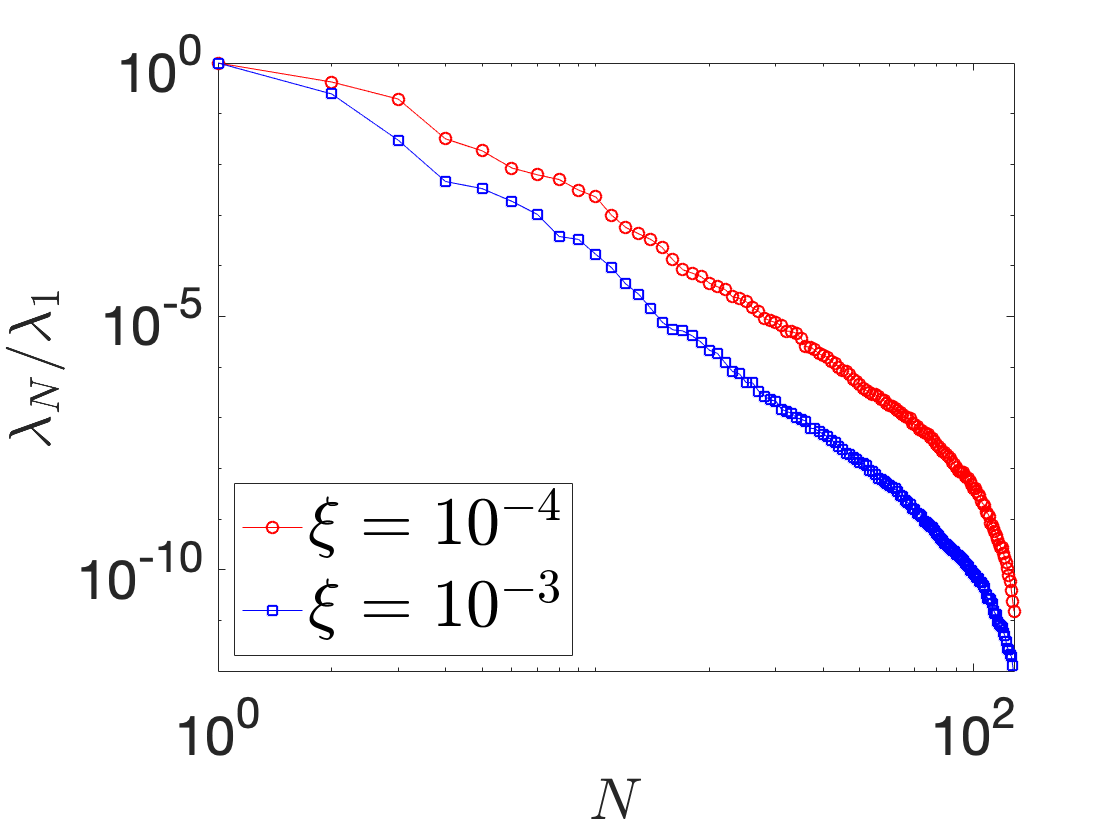}}  
 
\caption{advection-reaction problem.  
Sensitivity analysis with respect to $\xi$ in
\eqref{eq:optimization_statement} for three values of $\mu \in \mathcal{P}$.
(a): behavior of the optimal proximity measure 
$\mathfrak{f}( \mathbf{a}_{\rm hf}(\mu), \mu, \bar{\mu})$ with $\xi$.
(b): behavior of the mapping $H^2$ seminorm with $\xi$.
(c): decay of the $\| \cdot \|_2$-POD eigenvalues associated with 
$\{ \mathbf{a}_{\mu^i}^{\rm hf}   \}_{i=1}^{n_{\rm train}}$ for two choices of $\xi$.
}
 \label{fig:hyp_xi}
\end{figure}

\section{Geometry reduction}
\label{sec:geometry_reduction}
  
 In this section,  we discuss how to adapt the registration procedure introduced in section  \ref{sec:data_compression} to  geometry reduction. 
In this paper, we shall assume that the reference domain $\Omega$ and the parameterized displacement field $\mathbf{d}_{\mu}: \partial \Omega \to \mathbb{R}^d$ are given for all $\mu \in \mathcal{P}$: given $\mathbf{X} \in \partial \Omega$, we can thus compute the corresponding material point in the physical configuration as $\mathbf{x}^{\mu} = \mathbf{X} + \mathbf{d}_{\mu}(\mathbf{X})$. As discussed in the introduction, we remark that the boundary displacement --- or equivalently a parameterization of the boundary $\partial \Omega_{\mu}$ --- might not be available for various classes of problems, including biological systems.

In view of the discussion, we further introduce the rectangle $\Omega_{\rm box} = (a,b) \times (c,d)$ such that  $\Omega, \Omega_{\mu} \subset \Omega_{\rm box}$ for all $\mu \in  \mathcal{P}$. Then, we introduce the 
parameterized function $\boldsymbol{\Psi}^{\rm hf}$ in \eqref{eq:prescribed_mapping_form}, and we define the bases $\{ \boldsymbol{\varphi}_m^{\rm hf} \}_{m=1}^{M_{\rm hf}}$ as in \eqref{eq:tensorized_polynomials} --- after having applied a suitable change of variables.

\subsection{Registration procedure}
\label{sec:registration_geo}

We introduce  (i) the reference points $\{ \mathbf{X}_i  \}_{i=1}^{N_{\rm bnd}}$ $ \subset \partial \Omega$, (ii) the corresponding displaced points 
$\{  \mathbf{x}_i^k = \mathbf{X}_i + \mathbf{d}_{\mu^k}(\mathbf{X}_i) \}_{i,k}$ for some parameters $\mu^1,\ldots,\mu^{n_{\rm train}} \in \mathcal{P}$, and (iii) the parameterized function $\boldsymbol{\Psi}^{\rm hf}$ \eqref{eq:prescribed_mapping_form} - \eqref{eq:tensorized_polynomials}.
Then, for $k=1,\ldots, n_{\rm train}$, we choose $\mathbf{a}_{\rm hf}^k = \mathbf{a}_{\rm hf}(\mu^k)$ as a solution to 
\begin{subequations} 
\label{eq:optimization_statement_geo}
\begin{equation}
\label{eq:optimization_statement_geo_a}
\begin{array}{l}
\displaystyle{
\min_{\mathbf{a} \in \mathbb{R}^{M^{\rm hf}}} \,
\mathfrak{f}^k \left(\mathbf{a}  \right) \,  + \, \xi \big|   
\boldsymbol{\Psi}_{\mathbf{a}}^{\rm hf}  \big|_{H^2(\Omega_{\rm box})}^2, } \\[3mm]
\displaystyle{
{\rm s.t.} \;
\int_{\Omega_{\rm box}} \, 
{\rm exp} \left( \frac{\epsilon  - \mathfrak{J}_{\mathbf{a}}^{\rm hf}(\mathbf{X})}{C_{\rm exp}} \right) \,  + \, 
{\rm exp} \left( \frac{\mathfrak{J}_{\mathbf{a}}^{\rm hf}(\mathbf{X}) - 1/\epsilon }{C_{\rm exp}} \right)
\, dX \leq \delta,} \\
\end{array}
\end{equation}
where  the proximity measure $\mathbf{f}^k(\mathbf{a}) =  \mathfrak{f} \left(\mathbf{a} , \mu^k   \right)  $   is given by
\begin{equation}
\label{eq:mapping_obj}
\mathfrak{f}\left(
\mathbf{a} , \mu  \right)
:= \; 
\frac{1}{N_{\rm bnd}} \, 
\sum_{i=1}^{N_{\rm bnd}} \, \big\|
\boldsymbol{\Psi}_{\mathbf{a}}^{\rm hf} \left(   \mathbf{X}_i  \right)
\, -  \, \mathbf{x}_i^{\mu}  \big\|_2^2.
\end{equation}
\end{subequations}
Given the dataset $\{ (\mu^k, \mathbf{a}_{\rm hf}^k ) \}_{k=1}^{n_{\rm train}}$, we proceed as in section \ref{sec:generalization} to generate the mapping $\boldsymbol{\Phi}: \Omega \times \mathcal{P} \to \mathbb{R}^d$.

We observe that \eqref{eq:optimization_statement_geo} differs from \eqref{eq:optimization_statement} due to the choice of the proximity measure. Here, $\mathfrak{f}$ is an approximation of the $L^2(\partial \Omega)$ error over the boundary
$$
\mathfrak{f}\left(
\mathbf{a} , \mu  \right) \approx
\mathfrak{f}^{\infty}\left(
\mathbf{a} , \mu  \right) :=
\int_{\partial \Omega} \, 
\|  \boldsymbol{\Psi}_{\mathbf{a}}^{\rm hf} \left(   \mathbf{X}   \right)
\, -  \, \mathbf{X} - \mathbf{d}_{\mu}(\mathbf{X} ) \, \|_2^2 \, dX.
$$
If $\mathfrak{f}^{\infty}(\mathbf{a}^{\rm hf}, \mu) = 0$ for some admissible $\mathbf{a}^{\rm hf} \in \mathbb{R}^M$ and $\Omega$ satisfies the hypotheses of Proposition \ref{th:mapping_general},
recalling Proposition \ref{th:extension_sad},
 we find that $ \boldsymbol{\Psi}_{\mathbf{a}}^{\rm hf}$  is a bijection from $\Omega$ to $\Omega_{\mu}$. 
We further remark that, if $\boldsymbol{\Psi}_{\mathbf{a}}^{\rm hf}$ is a bijection from $\Omega_{\rm box}$ into itself, it is injective in $\Omega$.

\begin{remark}
\textbf{Implicit surfaces.}
If $\partial \Omega_{\mu}$ is represented implicitly as
$\partial \Omega_{\mu} = \{ \mathbf{x} \in \mathbb{R}^d: \; \mathit{G}_{\mu}(\mathbf{x})  = 0  \}$ for $\mathit{G}_{\mu}: \mathbb{R}^d \to \mathbb{R}$, 
then we might consider the proximity measure:
$$
\mathfrak{f}(\mathbf{a}, \mu, \bar{\mu}) \, = \, 
\frac{1}{N_{\rm bnd}} \, 
\sum_{i=1}^{N_{\rm bnd}} \, \big|  \mathit{G}_{\mu}\left(
\boldsymbol{\Psi}_{\mathbf{a}}^{\rm hf} (  \mathbf{X}_i   ) \right)   \big|^2
$$ 
where $\{   \mathbf{X}_i  \}_i$ is the  set of control points on $\partial \Omega$. We do not consider this choice  in the numerical experiments.
\end{remark}

\subsection{Model problems}
\label{sec:gr_model_problem}
We present below the two 
 model problems considered in the numerical experiments.

\subsubsection{Diffusion problem with discontinuous coefficients}
\label{sec:model_pb_diff}

We consider the diffusion problem
\begin{subequations}
\label{eq:diffusion_md_pb}
\begin{equation}
-\nabla \cdot \left( \kappa_{\mu} \nabla z_{\mu} \right) \, = \, 1  \; \;   {\rm in} \; \Omega_{\rm box}, \quad
z_{\mu} \big|_{\partial \Omega_{\rm box}} = 0,
\end{equation}
where 
$\Omega_{\rm box} = (0,1)^2$, 
$\mu= [\mu_1,\mu_2] \in \mathcal{P}= [-0.05,0.05]^2$ and the conductivity coefficient is given by 
\begin{equation}
\label{eq:kappa_diffusion_md_pb}
\kappa_{\mu}(\mathbf{x}):= \, 0. 1 \, + \, 0.9 \, \mathbbm{1}_{\Omega_{\rm  \mu}}(\mathbf{x}), \quad
\Omega_{\rm  \mu}:= \{\mathbf{x} \in \Omega: \, \| \mathbf{x} - \bar{\mathbf{x}}_{\mu} \|_{\infty} \leq \frac{1}{4}  \},
\end{equation}
with $\bar{\mathbf{x}}_{\mu}:= [1/2 +\mu_1,1/2 + \mu_2]$. The problem is a variant of the thermal block problem, which has been extensively considered in the reduced basis literature
(see, e.g., \cite[section 6.1.1]{rozza2007reduced}). 
We here approximate the solution to
\eqref{eq:diffusion_md_pb} through a 
continuous Galerkin P3 FE  discretization with $N_{\rm hf}= 21025$ degrees of freedom.
 \end{subequations}

Since $\kappa_{\mu}$ is piecewise constant with parameter-dependent jump discontinuities,  pMOR techniques are not well-suited to directly tackle  \eqref{eq:diffusion_md_pb}.
It is thus necessary to introduce a mapping $\boldsymbol{\Phi}:\Omega_{\rm box} \times \mathcal{P} \to \Omega_{\rm box}$ such that
$\kappa_{\mu} \circ \Phi_{\mu}: \Omega_{\rm box} \to \mathbb{R}$ is parameter-independent. From the geometry reduction viewpoint, 
given $\Omega := \Omega_{\rm  \mu=0}$, 
we seek $\boldsymbol{\Phi}$  such that
(i) $\boldsymbol{\Phi}_{\mu}(\Omega_{\rm box})=\Omega_{\rm box}$  and  (ii) $\boldsymbol{\Phi}_{\mu}(\Omega )=\Omega_{\mu}$ for all $\mu \in \mathcal{P}$. In view of the discussion, we introduce the mapped problem for a generic map $\boldsymbol{\Phi}_{\mu}:\Omega_{\rm box} \to \Omega_{\rm box}$: 
find $\tilde{z}_{\mu} = z_{\mu}\circ \boldsymbol{\Phi}_{\mu}  \in \mathcal{X} := H_0^1(\Omega_{\rm box})$ such that
\begin{subequations}
\label{eq:affine_diff_pb}
\begin{equation}
\label{eq:affine_diff_eq}
\int_{\Omega_{\rm box}} \, \mathbf{K}_{\mu}^{\star}  \widehat{\nabla} \tilde{z}_{\mu} \cdot \widehat{\nabla}  v \, dX \, = \, \int_{\Omega_{\rm box}} \, \mathfrak{J}_{\mu}  \, v dX,
\; \;
\forall \, v \in   \mathcal{X},
\end{equation} 
with 
\begin{equation}
\label{eq:affine_diff_coeffs}
\mathbf{K}_{\mu}^{\star}:=  \mathfrak{J}_{\mu} \, 
\left(  \kappa_{\mu} \, \circ \boldsymbol{\Phi}_{\mu}  \right)  \, 
\left( \widehat{\nabla} \boldsymbol{\Phi}_{\mu}  \right)^{-1}\, 
\left( \widehat{\nabla} \boldsymbol{\Phi}_{\mu}   \right)^{-T},
\quad
 \mathfrak{J}_{\mu}:= {\rm det} \left(   \widehat{\nabla} \boldsymbol{\Phi}_{\mu}  \right).
\end{equation}
\end{subequations}
{Note that \eqref{eq:diffusion_md_pb} is a special case of the general advection-diffusion-reaction problem considered in section \ref{sec:offline_online_pMOR}.
}

\subsubsection*{Automatic piecewise-affine maps}
Since the deformation of $\Omega_{  \mu}$ is rigid, we might   explicitly build a piecewise linear map $\boldsymbol{\Phi}$.
First, we identify a set of \emph{control points} --- the black dots in Figure \ref{fig:geometry_reduction_model}(a) --- and we use them to build a coarse partition of $\Omega_{\rm box}$ --- shown in Figure \ref{fig:geometry_reduction_model}(a) as well. Then, we define a mapping $\boldsymbol{\Phi}=\boldsymbol{\Phi}^{\rm aff}$ such that (i)
$\Phi_{\mu}^{\rm aff}({\Omega}  ) = \Omega_{\mu} $ for all $\mu \in \mathcal{P}$, and  (ii) $\boldsymbol{\Phi}_{\mu}^{\rm aff}$ is piecewise linear in $\mathbf{X}$ in all elements of the partition.
 Then, we define the mapped problem \eqref{eq:affine_diff_pb} with $\boldsymbol{\Phi}_{\mu}=\boldsymbol{\Phi}_{\mu}^{\rm aff}$.

Note that, for this choice of the mapping,  $\mathbf{K}_{\mu}^{\star} $ and $ \mathfrak{J}_{\mu}$
in \eqref{eq:affine_diff_coeffs}
 are piecewise-constant in each element of the partition: this implies that \eqref{eq:affine_diff_pb} is \emph{parametrically-affine} --- that is, 
 $\mathbf{K}_{\mu}^{\star}$ and $ \mathfrak{J}_{\mu} $ are linear combinations of parameter-dependent coefficients and parameter-independent spatial fields. Therefore, the solution $\widetilde{z}_{\mu}$  can be efficiently  approximated using standard pMOR (e.g., Reduced Basis) techniques.
 
Given the user-defined control points,  
 Rozza el al. have developed in \cite{rozza2007reduced}  an automatic procedure to generate the partition of $\Omega_{\rm box}$ and to determine an economic  piecewise-constant approximation of the form in \eqref{eq:affine_diff_pb}. The latter relies on symbolic manipulation techniques to identify redundant terms in the affine expansions.  Furthermore, for the approach to be effective, we should consider FE triangulations that are conforming to the coarse-grained partition in Figure \ref{fig:geometry_reduction_model}(a). Finally, the choice of the control points, which is trivial in this case, might not be straightforward for more challenging problems.
 In conclusion, even if the approach works remarkably well in many situations, practical implementation of the procedure in \cite{rozza2007reduced}   is rather involved and highly problem-dependent.

\subsubsection{Laplace's equation in parameterized domains}
\label{sec:laplace_gr}

Given the parameter domain $\mathcal{P} = [0.1,0.4]^2 \times [0,\pi/4]$, we first introduce the parametric closed curve $t \in [0,2\pi) \mapsto \boldsymbol{\gamma}_{\rm in, \mu}(t) \in \mathbb{R}^2$ such that
\begin{subequations}
\label{eq:laplace_model_problem}
\begin{equation}
\boldsymbol{\gamma}_{\rm in, \mu}(t) = \left[
\begin{array}{l}
\displaystyle{ \cos(t) \left(  1 + \mu_1 \, \left( \cos (t + \mu_3) \right)^2 \, + \, 2 \cdot 10^{-3} \left( (2\pi - t) \, t \right)^2 \right) }
\\[3mm]
\displaystyle{ \sin(t) \left(  1 + \mu_2 \, \left( \sin (t + \mu_3) \right)^2 \, + \, 2 \cdot 10^{-3} \left( (2\pi - t) \, t \right)^2 \right)  }
\\
\end{array}
\right]
\end{equation}
and we denote by $\Omega_{\rm in,  \mu}$ the bounded domain such that $\partial \Omega_{\rm in,  \mu} = \boldsymbol{\gamma}_{\mu}([0,2\pi])$ for all $\mu \in \mathcal{P}$.
We further define $\Omega_{\rm box} = (-2,2)^2$: note that
$\Omega_{\rm in, \mu} \Subset \Omega_{\rm box}$ for all $\mu \in \mathcal{P}$. Then, we introduce the Laplace's problem:
\begin{equation}
\label{eq:laplace_model_problem_b}
-\Delta \, z_{\mu} \, = \, 0  \quad {\rm in} \, \Omega_{\mu}, \qquad
z_{\mu} \big|_{ \partial \Omega_{\rm in, \mu}    } \, = \, 1,
\quad
z_{\mu} \big|_{ \partial \Omega_{\rm box}    } \, = \, 0,
\end{equation}
where $\Omega_{\mu}  := \Omega_{\rm box} \setminus \Omega_{\rm in, \mu}$ is depicted in Figures \ref{fig:geometry_reduction_model}(c) and \ref{fig:geometry_reduction_model}(d) for two values of $\mu \in \mathcal{P}$.
\end{subequations}

We introduce the reference domain $\Omega = \Omega_{\rm box}  \setminus \Omega_{\rm in}$, with 
$ \Omega_{\rm in} = \mathcal{B}_1(\mathbf{0})$: note that $\Omega$ is diffeomorphic  to $\Omega_{\mu}$ for all $\mu \in \mathcal{P}$. Then, given the bijection $\boldsymbol{\Phi}_{\mu}: \Omega \to \Omega_{\mu}$, 
and the lift $R_{\rm D}$ such that 
\begin{subequations}
\label{eq:laplace_weak_mapped}
\begin{equation}
\label{eq:lift_laplace}
-\Delta \, R_{\rm D}  \, = \, 0  \quad {\rm in} \,   \Omega, \qquad
R_{\rm D} \big|_{ \partial \Omega_{\rm in}    } \, = \, 1,
\quad
R_{\rm D}  \big|_{ \partial \Omega_{\rm box}    } \, = \, 0,
\end{equation}
we define the mapped  problem for the lifted solution: find $\tilde{z}_{\mu} \in  \mathcal{X}:= H_0^1(\Omega)$ such that 
\begin{equation}
\int_{\Omega} \, \mathbf{K}_{\mu}^{\star}  \widehat{\nabla} \left( \tilde{z}_{\mu} + R_{\rm D}  \right) \cdot \widehat{\nabla}  v \, dX \, = \, 0,
\; \;
\forall \, v \in   \mathcal{X},
\end{equation} 
with  $\mathbf{K}_{\mu}^{\star} :=  \mathfrak{J}_{\mu} \, 
\left( \widehat{\nabla} \boldsymbol{\Phi}_{\mu}  \right)^{-1}\, 
\left( \widehat{\nabla} \boldsymbol{\Phi}_{\mu}   \right)^{-T}$.
\end{subequations}

\begin{figure}[h!]
\centering

\subfloat[] 
 { 
\begin{tikzpicture}[scale=0.75]
\linethickness{0.3 mm}
\draw[ultra thick]  (0,0)--(4,0)--(4,4)--(0,4)--(0,0);
\draw[ultra thick]  (1,1)--(3,1)--(3,3)--(1,3)--(1,1);
\draw[ultra thick]  (0,0)--(1,1)--(2,0)--(3,1)--(4,0);
\draw[ultra thick]  (0,0)--(1,1)--(0,2)--(1,3)--(0,4);

\draw[ultra thick]  (0,4)--(1,3)--(2,4)--(3,3)--(4,4);

\draw[ultra thick]  (4,0)--(3,1)--(4,2)--(3,3)--(4,4);

  \coordinate   (E) at (0,0) ;  \fill[black] (E) circle[radius=4pt];
\coordinate   (E) at (4,0) ;  \fill[black] (E) circle[radius=4pt];
\coordinate   (E) at (0,4) ;  \fill[black] (E) circle[radius=4pt];
\coordinate   (E) at (4,4) ;  \fill[black] (E) circle[radius=4pt];

  \coordinate   (E) at (1,1) ;  \fill[black] (E) circle[radius=4pt];
\coordinate   (E) at (1,3) ;  \fill[black] (E) circle[radius=4pt];
\coordinate   (E) at (3,3) ;  \fill[black] (E) circle[radius=4pt];
\coordinate   (E) at (3,1) ;  \fill[black] (E) circle[radius=4pt];

  \coordinate   (E) at (0,2) ;  \fill[black] (E) circle[radius=4pt];
\coordinate   (E) at (4,2) ;  \fill[black] (E) circle[radius=4pt];
\coordinate   (E) at (0,2) ;  \fill[black] (E) circle[radius=4pt];
\coordinate   (E) at (4,2) ;  \fill[black] (E) circle[radius=4pt];

\coordinate [label={center:  {\large {$ {\Omega}$}}}] (E) at (2, 2) ;
\coordinate [label={right:  {\large {$ {\Omega}_{\rm box}$}}}] (E) at (4, 4.1) ;

\end{tikzpicture}
}
~~~~~~
\subfloat[] 
 { 
\includegraphics[width=0.34\textwidth]
 {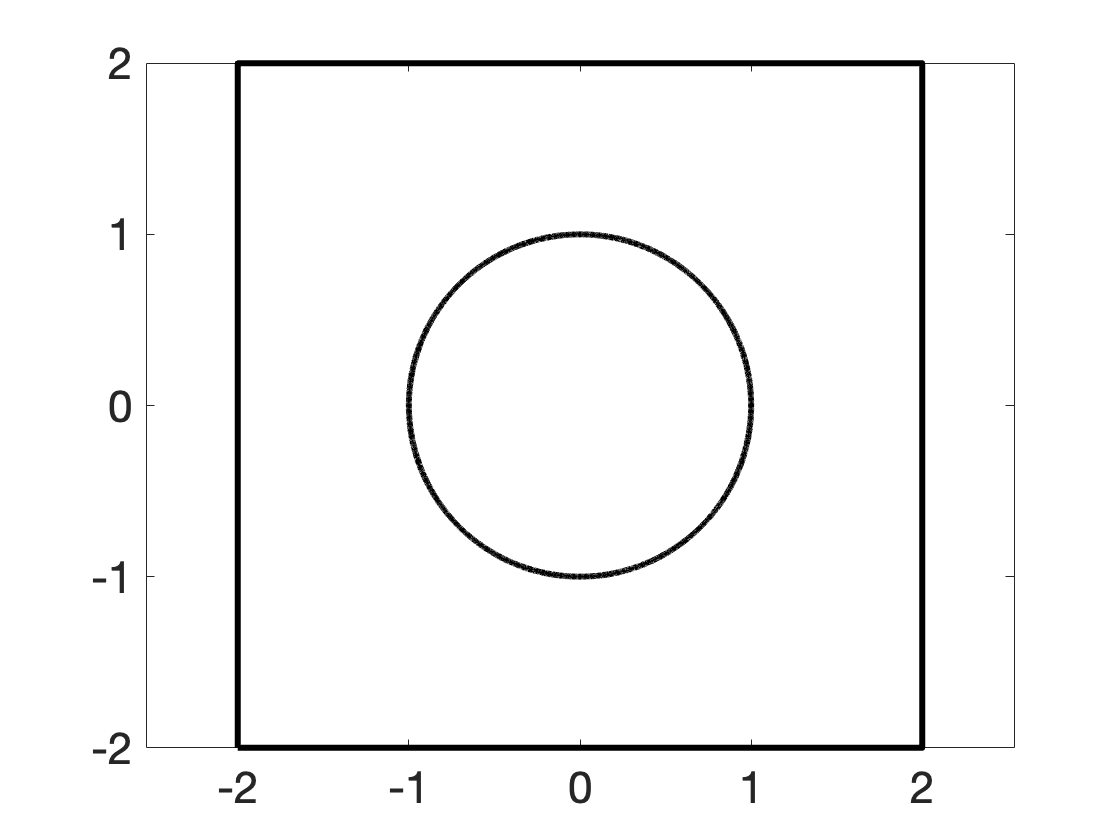}
 }
 
\subfloat[] 
 { \includegraphics[width=0.34\textwidth]
 {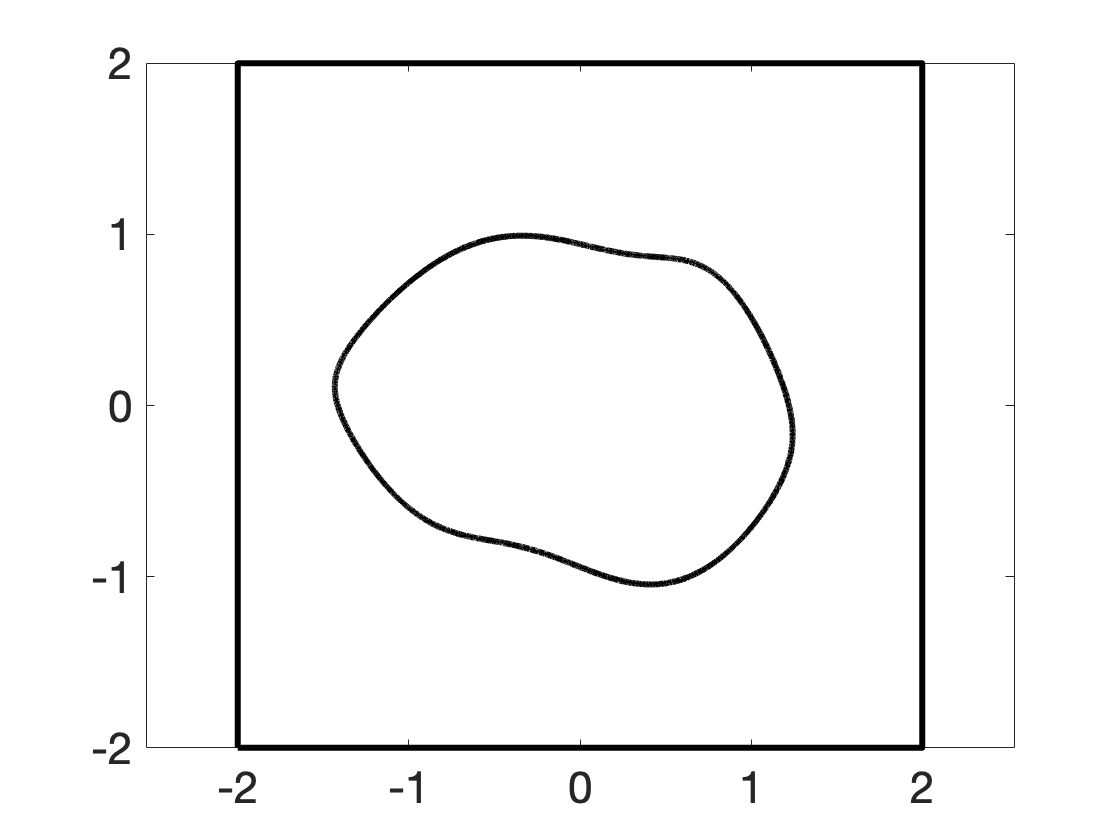} } 
~~~~~
 \subfloat[] 
 { \includegraphics[width=0.34\textwidth]
 {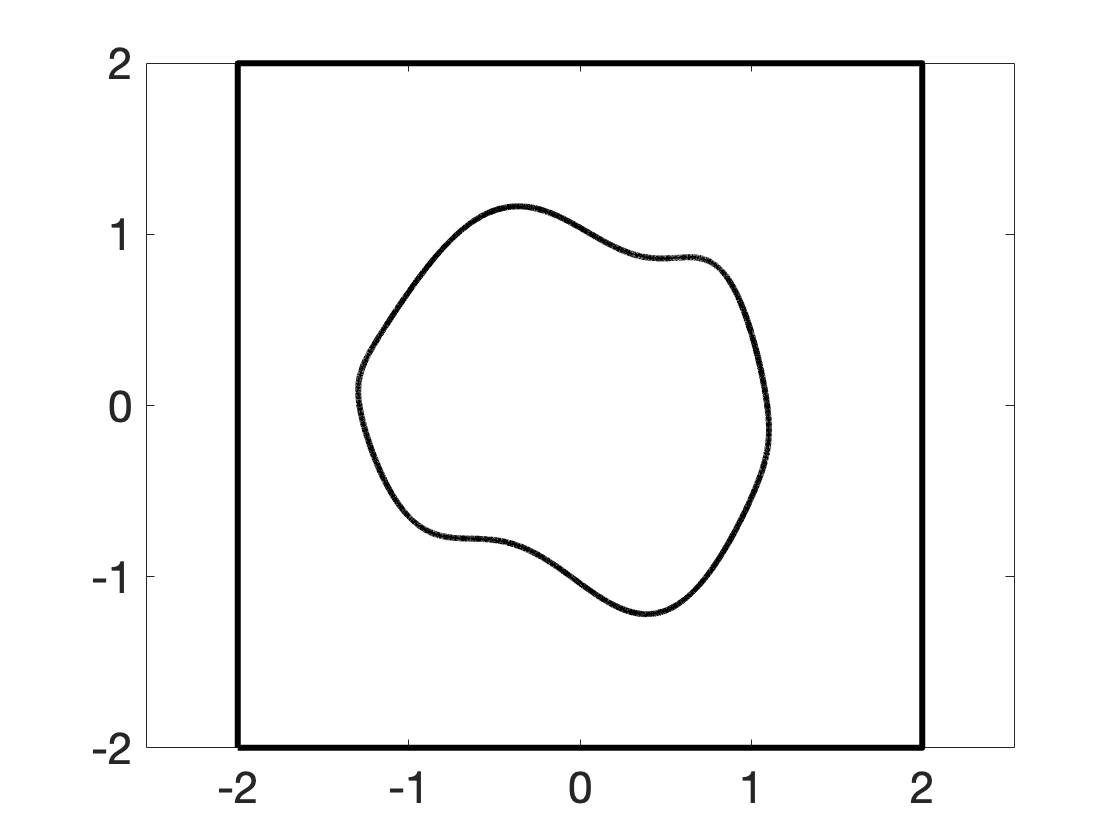} } 
 
\caption{geometry reduction.
(a): affine geometry (section \ref{sec:model_pb_diff}).  Black dots denote the user-defined control points associated with the coarse triangulation.
(b)-(d): non-affine geometry (section \ref{sec:laplace_gr}).
(b):  reference domain $\Omega$;
(c)-(d): domain  $\Omega_{\mu}$ for two values of $\mu\in \mathcal{P}$.
}
\label{fig:geometry_reduction_model}
\end{figure} 

\subsubsection{Application of the registration procedure}
\label{sec:gr_registration}

We consider the parametric function $\boldsymbol{\Psi}^{\rm hf}$  
\eqref{eq:prescribed_mapping_form} - \eqref{eq:tensorized_polynomials}: thanks to this choice, 
control points on $\partial \Omega_{\rm box}$ are not necessary. Since, as in the examples of section \ref{sec:data_compression}, the conductivity matrix $\mathbf{K}_{\mu}^{\star}$ and the Jacobian determinant $\mathfrak{J}_{\mu}$ are not expected to be affine, we resort to EIM,
\begin{equation}
\label{eq:affine_diff_eq_EIM}
\mathbf{K}_{\mu}^{\star} \approx \mathbf{K}_{\mu}^{\rm eim} :=
\sum_{q=1}^{Q_{\rm a}} \, \Theta_{\mu}^{q, \kappa} \, \mathbf{K}_{q},
\quad
\mathfrak{J}_{\mu}^{\star}  \approx \mathfrak{J}_{\mu}^{\rm eim} :=
\sum_{q=1}^{Q_{\rm f}} \, \Theta_{\mu}^{q, \mathfrak{j}} \, \mathfrak{J}_{q},
\end{equation}
to obtain a parametrically-affine surrogate model. 
As explained in section \ref{sec:eim}, the basis functions are built using POD: the size of the expansions $Q_{\rm a}, Q_{\rm f}$ is chosen based on the criterion in \eqref{eq:POD_cardinality_selection}.

Some comments are in order. Our approach leads to a non-affine formulation ---and thus  requires hyper-reduction to achieve online efficiency --- for both problems; on the other hand, we observe that the approach  
exclusively relies on the parameterization of the boundary $\partial \Omega_{\rm in, \mu}$
 and can be applied   for virtually any choice of the conductivity.

\subsection{Numerical results}
\label{sec:gr_numerics}
We present below results for the two model problems introduced in section \ref{sec:gr_model_problem}. Since the focus of this section is geometry reduction, we do not discuss the construction of the ROM for the mapped problems  \eqref{eq:affine_diff_pb} and \eqref{eq:laplace_weak_mapped}.

\subsubsection{Diffusion problem with discontinuous coefficients}
\label{sec:results_diff}

We choose $\xi=4 \cdot 10^{-4}$,   $tol_{\rm POD}= 10^{-5}$, $\overline{M} = 6$ ($M_{\rm hf}=72$); furthermore, we consider the proximity measure  \eqref{eq:mapping_obj} where $\{ \mathbf{X}_i \}_{i=1}^{N_{\rm bnd}}$ is an uniform grid of $\partial \Omega_{\rm in}$ with $N_{\rm bnd} = 400$ and $\{ \mathbf{x}_i^{\mu} = \mathbf{X}_i \, + \, \mu  \}_{i}$; finally, we consider $n_{\rm train}=16^2$ equispaced  parameters in $\mathcal{P}$.
For this choice of the parameters, our procedure  returns an affine expansion with $M=6$ terms.

In Figure \ref{fig:diff_coeffs}, we investigate the (linear) complexity of the parametric manifolds associated with \eqref{eq:diffusion_md_pb}.  In Figure \ref{fig:diff_coeffs}(a), we compute the $L^2(\Omega_{\rm box})$-POD eigenvalues associated with $\{ \kappa_{\mu^k} \}_{k=1}^{n_{\rm train}}$ (unregistered) and
$\{ \widetilde{\kappa}_{\mu^k}
= \kappa_{\mu^k} \circ \boldsymbol{\Phi}_{\mu^k} \}_{k=1}^{n_{\rm train}}$ (registered). We observe that $\lambda_{N=2} \leq 10^{-15} \lambda_{N=1}$ for the registered case: the mapping $\boldsymbol{\Phi}_{\mu}$ is able to ``fix" the position of the jump discontinuity in the reference configuration\footnote{
More precisely, if we denote by $\{ \mathbf{x}_q^{\rm in}  \}_q$ (resp. $\{ \mathbf{x}_q^{\rm out}  \}_q$)
the FE quadrature points in $\Omega $ (resp. $\Omega_{\rm box} \setminus \Omega$), we have that
$\boldsymbol{\Phi}_{\mu} (   \{ \mathbf{x}_q^{\rm in}  \}_q  ) \subset \Omega_{  \mu}$
(resp. $\boldsymbol{\Phi}_{\mu} (   \{ \mathbf{x}_q^{\rm out}  \}_q  ) \subset 
\Omega_{\rm box} \setminus \Omega_{  \mu}$) for all $\mu \in \mathcal{P}$.}. In Figure \ref{fig:diff_coeffs}(b), we show the $L^2(\Omega_{\rm box})$-POD eigenvalues of $\{ \mathbf{K}_{\mu^k}^{\star}  \}_{k=1}^{n_{\rm train}}$ and $\{ \mathfrak{J}_{\mu^k}  \}_{k=1}^{n_{\rm train}}$. Finally, in Figure \ref{fig:diff_coeffs}(c), we show the behavior of the $H^1(\Omega_{\rm box})$-POD eigenvalues  of 
$\{ z_{\mu^k} \}_{k=1}^{n_{\rm train}}$ (unregistered) and
$\{ \widetilde{z}_{\mu^k} \}_{k=1}^{n_{\rm train}}$ (registered):  even if the mapping is built based on the diffusivity coefficient, it is also effective in improving the linear reducibility of the solution manifold.  

\begin{figure}[h!]
\centering
 \subfloat[] {\includegraphics[width=0.3\textwidth]
 {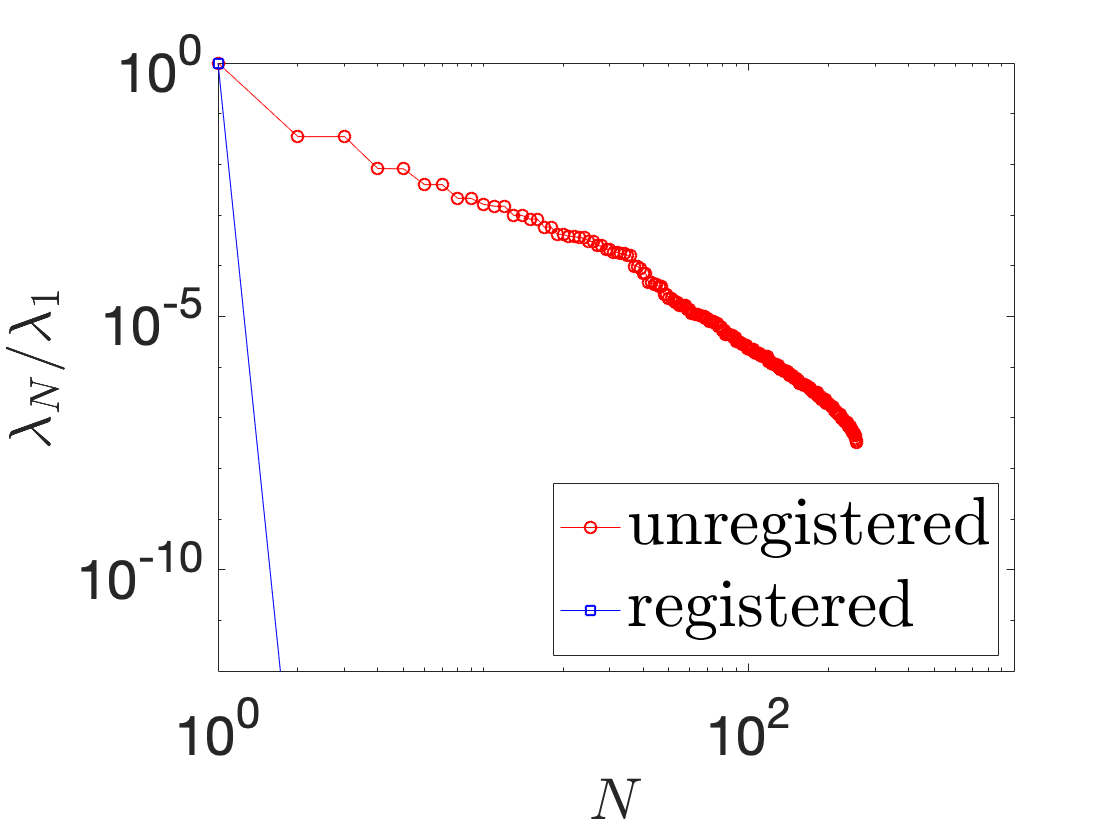}}  
    ~ ~
\subfloat[] {\includegraphics[width=0.3\textwidth]
 {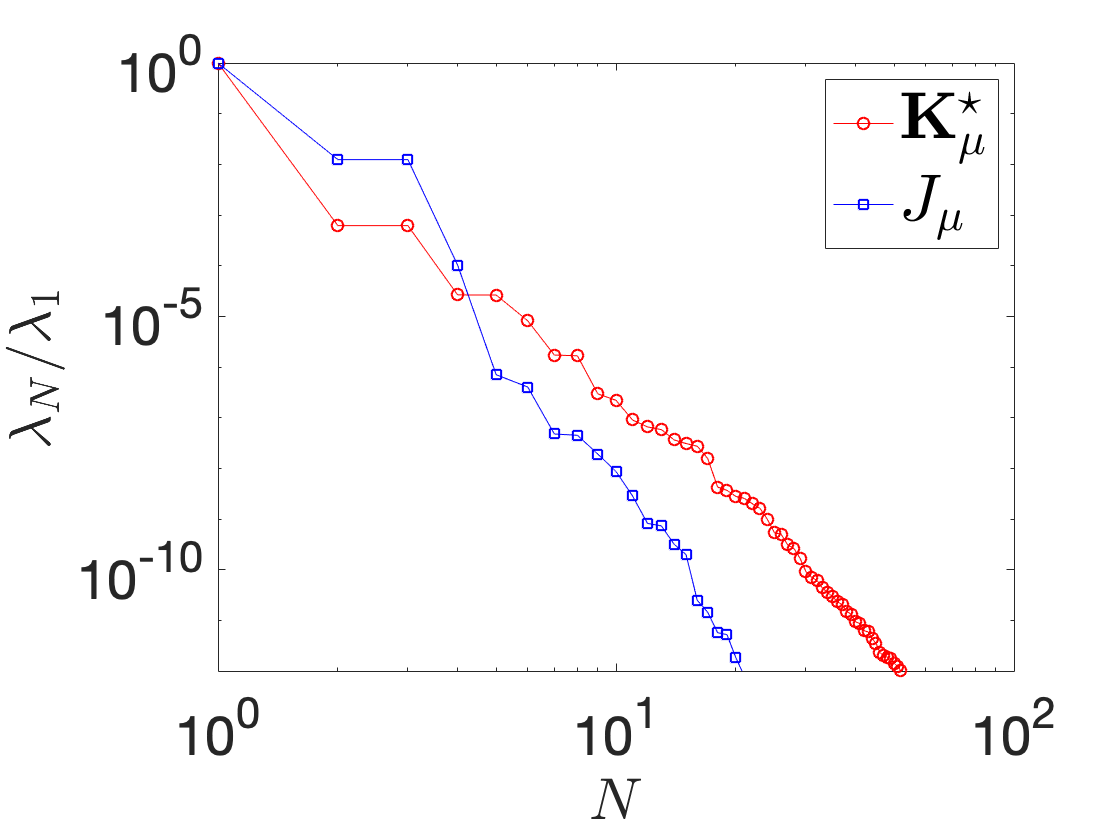}}  
     ~ ~
\subfloat[] {\includegraphics[width=0.3\textwidth]
 {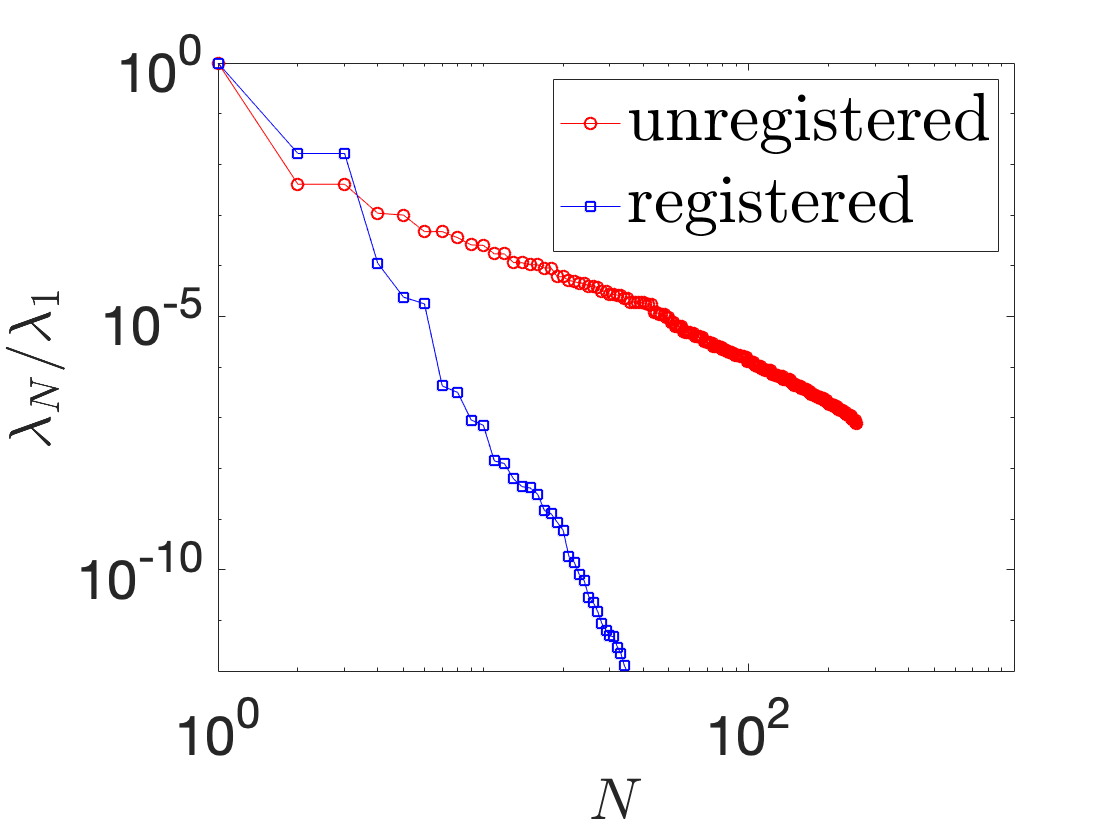}}   
 
\caption{diffusion problem with discontinuous coefficients. Behavior of  POD eigenvalues  
associated with 
$\{ \kappa_{\mu^k} \}_k$ 
and $\{ \tilde{\kappa}_{\mu^k} \}_k$
(Fig.  (a)),
 $\{ \mathbf{K}_{\mu^k}^{\star} \}_k$ and
$\{ \mathfrak{J}_{\mu^k} \}_k$
(Fig. (b)), and
$\{ z_{\mu^k} \}_k$ and
$\{ \tilde{z}_{\mu^k} \}_k$
(Fig. (c)).
}
 \label{fig:diff_coeffs}
\end{figure} 

In Figure \ref{fig:diff_eim}, we show the behavior of the mean  relative error 
\begin{equation}
\label{eq:Eavg_diff}
E_{\rm avg}:= 
\frac{1}{n_{\rm test}}  \, \sum_{i=1}^{n_{\rm test}}  \,
 \frac{\| \tilde{z}_{\mu^i}  - \tilde{z}_{\mu^i}^{\rm eim}   \|_{\star}}{\| \tilde{z}_{\mu^i} \|_{\star}},
\quad
\mu^1,\ldots,\mu^{n_{\rm test}} \overset{\rm iid}{\sim} {\rm Uniform}(\mathcal{P}),
\end{equation}
with respect to the tolerance $tol_{\rm eim}$ associated with  the choice of $Q_{\rm a}, Q_{\rm f}$ in \eqref{eq:affine_diff_eq_EIM}, for $n_{\rm test}=20$. 
Here, $\| \cdot \|_{\star}$ denotes either the $L^2(\Omega)$ norm or the $H^1(\Omega)$ norm:
$$
\| w \|_{L^2(\Omega_{\rm box})}^2
=
\int_{\Omega_{\rm box}} \, |w|^2\, dx,
\quad
\| w \|_{H^1(\Omega_{\rm box})}^2
=
\int_{\Omega_{\rm box}} \, |w|^2\, + \| \nabla w \|_2^2 dx.
$$
In Figure \ref{fig:diff_eim}(b), we show the behavior of 
$Q_{\rm a}, Q_{\rm f}$ with respect to  $tol_{\rm eim}$: we observe that  the relative $H^1$  and $L^2$ errors are less than $10^{-3}$, for $Q_{\rm a}=19, Q_{\rm f}=9$.
{We remark that using piecewise-affine mappings (cf. section \ref{sec:model_pb_diff}) we might obtain an \emph{exact} affine form with $Q_{\rm a}=13, Q_{\rm f}=3$.
Once again, we remark   that  our approach is fully automatic.
}

\begin{figure}[h!]
\centering
 \subfloat[] {\includegraphics[width=0.4\textwidth]
 {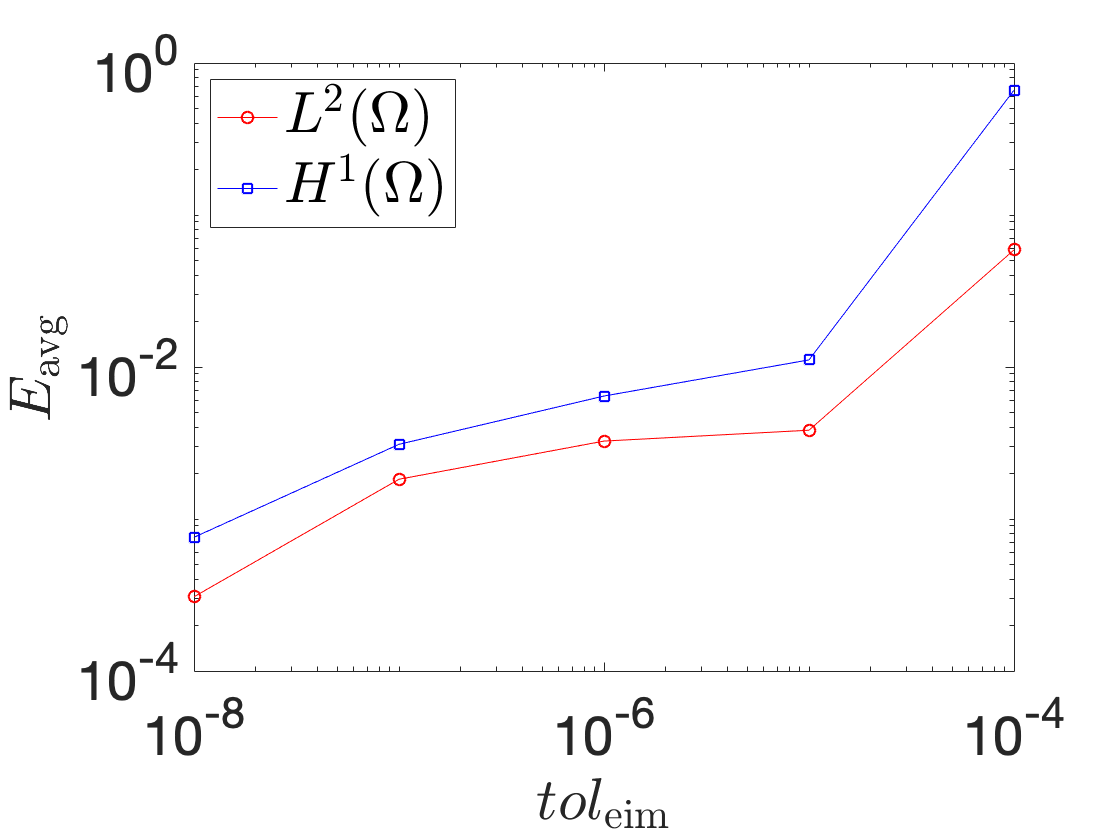}}  
    ~ ~
\subfloat[] {\includegraphics[width=0.4\textwidth]
 {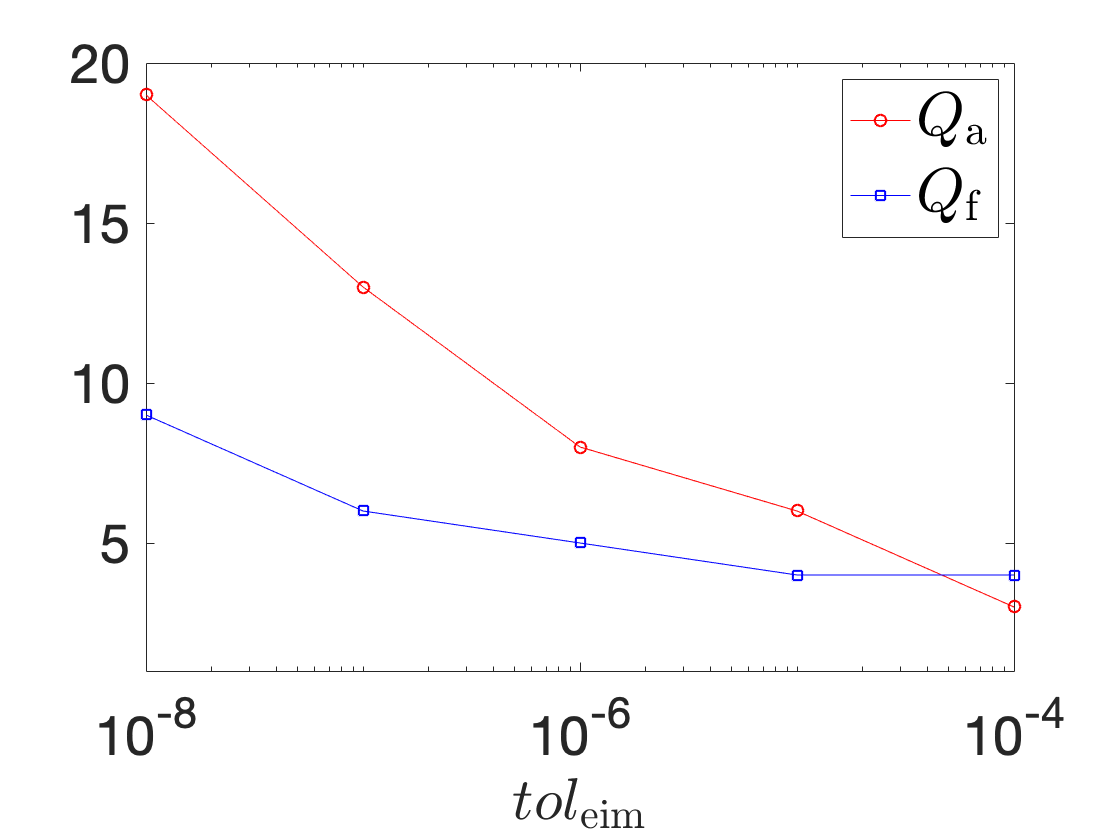}}  
 
\caption{diffusion problem with discontinuous coefficients.
(a): behavior of $E_{\rm avg}$ \eqref{eq:Eavg_diff} with $tol_{\rm eim}$
($n_{\rm test}=20$).
(b): behavior of $Q_{\rm a},Q_{\rm f}$  in \eqref{eq:affine_diff_eq_EIM} with $tol_{\rm eim}$.
}
 \label{fig:diff_eim}
\end{figure}

\subsubsection{Laplace's equation in parameterized domains}

{We choose $\xi=10^{-2}$, $tol_{\rm POD}=10^{-5}$, $M_{\rm hf}=288$,  and we define the proximity measure $\mathfrak{f}$ \eqref{eq:mapping_obj} based on $N_{\rm bnd}=10^{3}$ points
$\{ \mathbf{X}_i= \boldsymbol{\gamma}^{\rm ref}( 2 \pi i /N_{\rm bnd} )  \}_{i=1}^{N_{\rm bnd}}$ with 
$\boldsymbol{\gamma}^{\rm ref}(t)=[\cos(t), \sin(t)]$.
To generate the mapping, we consider uniform grids of $\mathcal{P}$ $\{  \mu^k \}_{k=1}^{n_{\rm train}}$ with $n_{\rm train}=4^3, 6^3, 8^3, 10^3$. }

{In Figure \ref{fig:registration_laplace},  
we show boxplots of the ``in-sample"  and ``out-of-sample"  errors
given by
\begin{equation}
\label{eq:Egeo}
\left\{
\begin{array}{ll}
\displaystyle{
E_k^{\rm geo,in} := \max_{i=1,\ldots,N_{\rm bnd}} \, \| \boldsymbol{\Psi}_{\mathbf{a}_{\rm hf}^k}^{\rm hf}(\mathbf{X}_i) - \mathbf{x}_i^{\mu^k} \|_2} &
\displaystyle{k=1,\ldots,n_{\rm train}} \\[3mm]
\displaystyle{
E_j^{\rm geo,out} := \max_{i=1,\ldots,N_{\rm bnd}^{\rm t}} \, {\rm dist} \left(
 \boldsymbol{\Phi}_{\tilde{\mu}^j}(\mathbf{X}_i^{\rm t}) , \, 
\{   \mathbf{x}_{i',j}^{\rm t}    \}_{i'=1}^{ N_{\rm bnd}^{\rm t}  }
\right)}
&
\displaystyle{j=1,\ldots,n_{\rm test}} \\
\end{array}
\right.
\end{equation}
where $\{\mathbf{X}_i^{\rm t}= \boldsymbol{\gamma}^{\rm ref}(t^i)       \}_i$ and $\{\mathbf{x}_{i,j}^{\rm t}= \boldsymbol{\gamma}_{\tilde{\mu}^j} (t^i)       \}_i$, with $t^i= \frac{2 \pi i}{N_{\rm bnd}^{\rm t}}$, $\tilde{\mu}^1,\ldots,\tilde{\mu}^{n_{\rm test}} \overset{\rm iid}{\sim} {\rm Uniform}(\mathcal{P})$, 
$N_{\rm bnd}^{\rm t}=10^4$, 
$n_{\rm test}=50$.}
 In-sample error depends on the choice of the hyper- parameters in \eqref{eq:optimization_statement} and on the choice of the discretization parameter $M_{\rm hf}$; on the other hand, out-of-sample error depends on the choice of $tol_{\rm POD}$ and on the number of training points. Interestingly, the value of $M$ is the same (and equal to $7$) for all choices of $n_{\rm train}$.

\begin{figure}[h!]
\centering
 \subfloat[$n_{\rm train}=4^3$] {\includegraphics[width=0.4\textwidth]
 {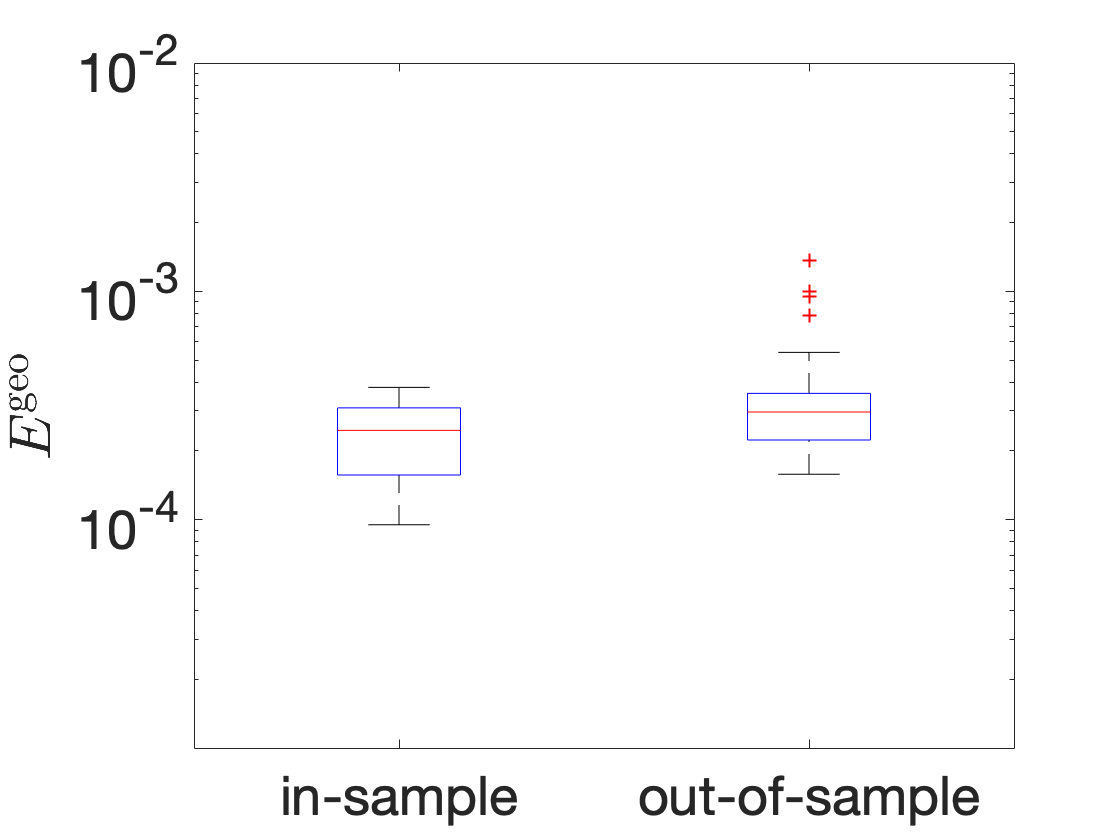}}  
    ~ ~
\subfloat[$n_{\rm train}=6^3$] {\includegraphics[width=0.4\textwidth]
 {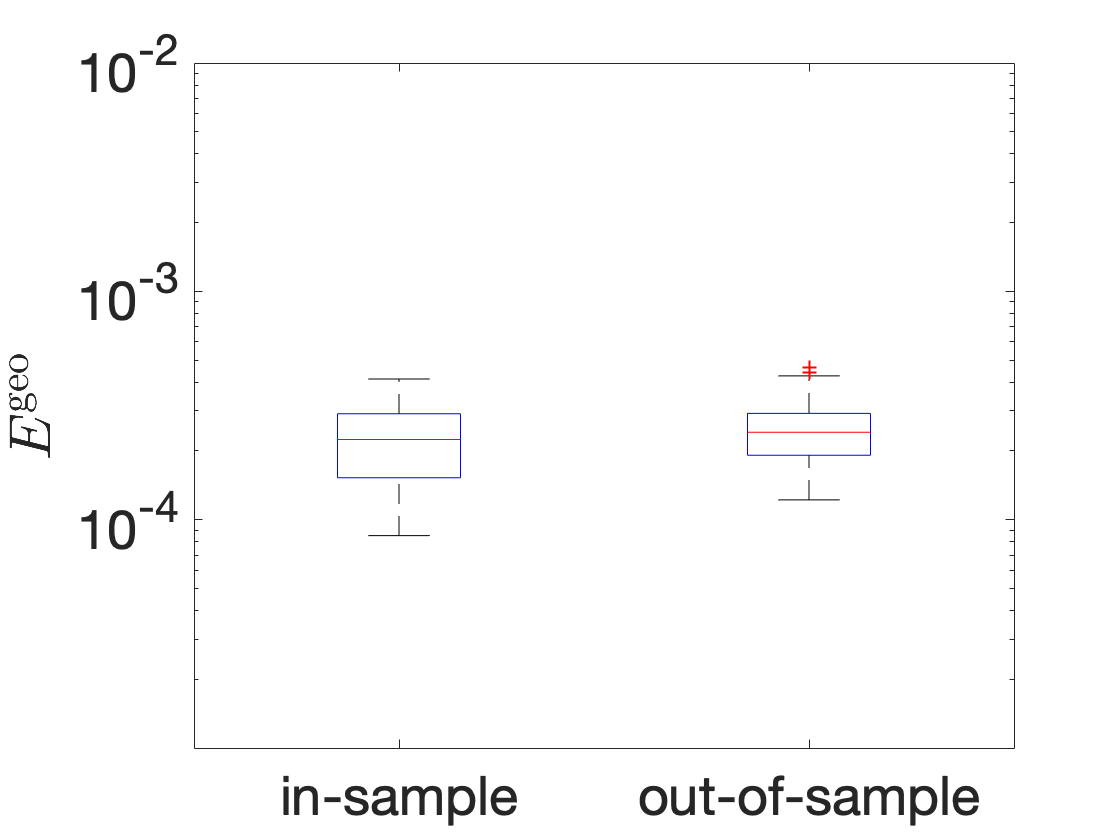}}

\subfloat[$n_{\rm train}=8^3$] {\includegraphics[width=0.4\textwidth]
 {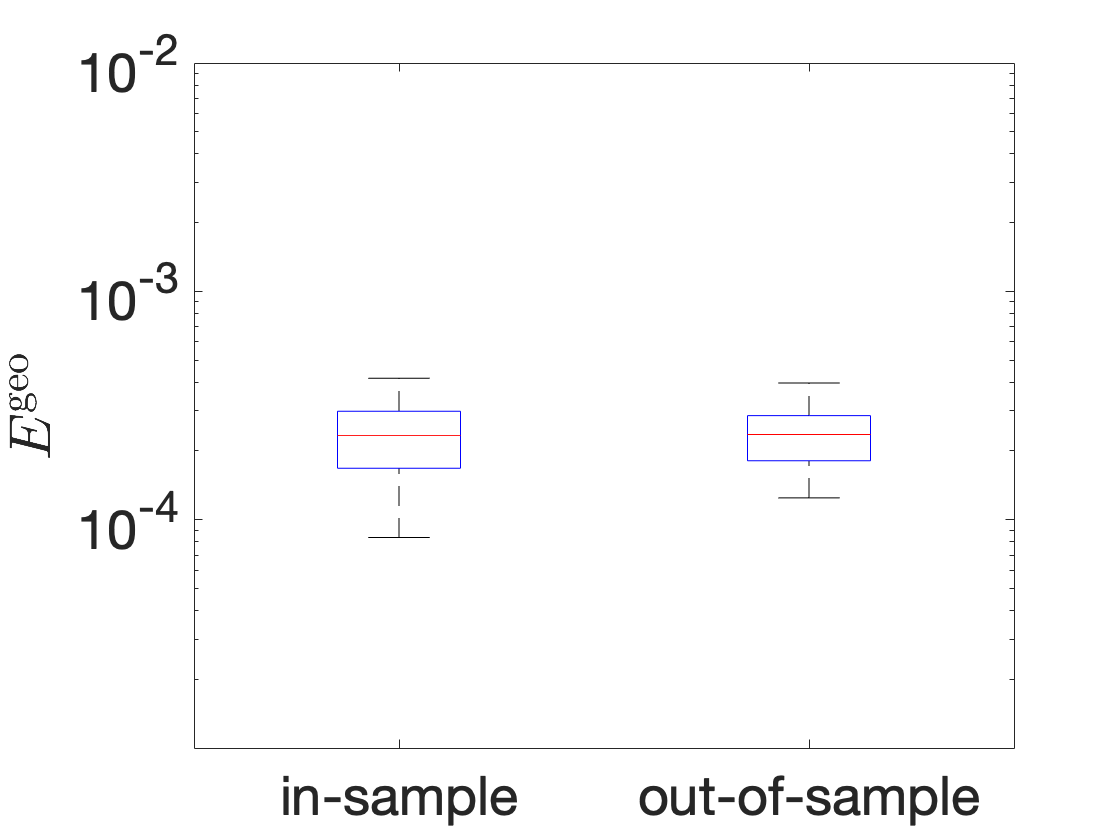}}  
     ~ ~
\subfloat[$n_{\rm train}=10^3$] {\includegraphics[width=0.4\textwidth]
 {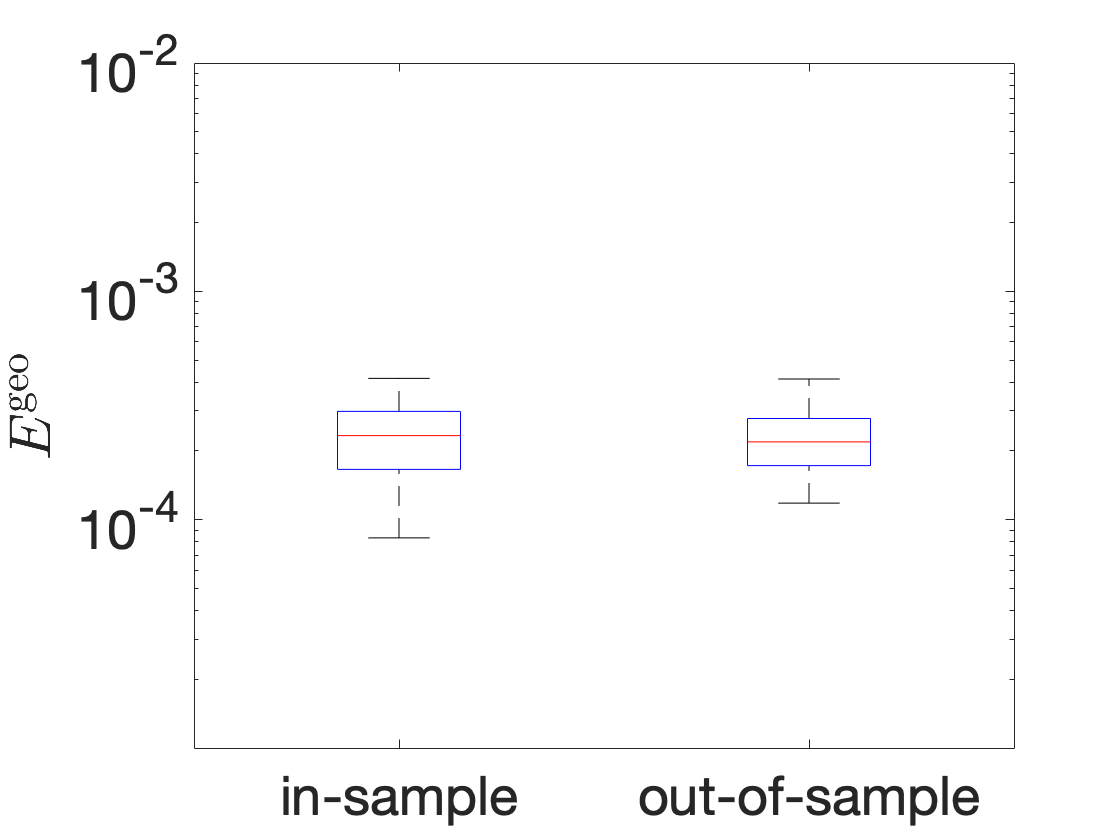}}

\caption{Laplace's equation in parameterized domain; performance of the registration procedure.
Boxplots of in-sample and out-of-sample errors 
 $E_{\rm geo}$ for several training sizes.
}
 \label{fig:registration_laplace}
\end{figure} 

Figure \ref{fig:registration_laplace_2} replicates the results of Figure \ref{fig:diff_eim} for the second model problem.
Here, we consider $n_{\rm train}=10^3$, $\xi=10^{-2}$, $M_{\rm hf}=288$, $tol_{\rm POD}=10^{-5}$,   and we compute 
$E_{\rm avg}$ \eqref{eq:Eavg_diff} based on $n_{\rm test}=50$ parameters.
We find that the relative $L^2$ and $H^1$ errors
are below $10^{-3}$ for $Q_{\rm a}$ equal to $17$.

\begin{figure}[h!]
\centering
 \subfloat[] {\includegraphics[width=0.4\textwidth]
 {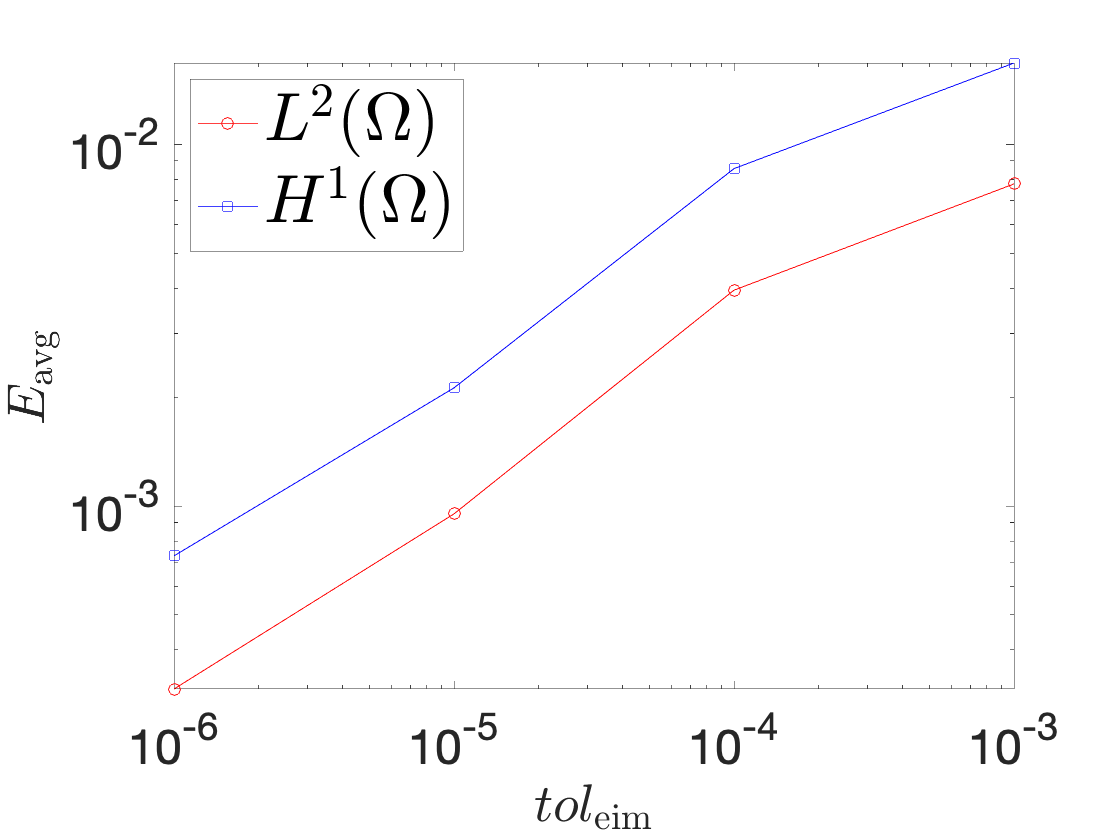}}  
    ~ ~
\subfloat[] {\includegraphics[width=0.4\textwidth]
 {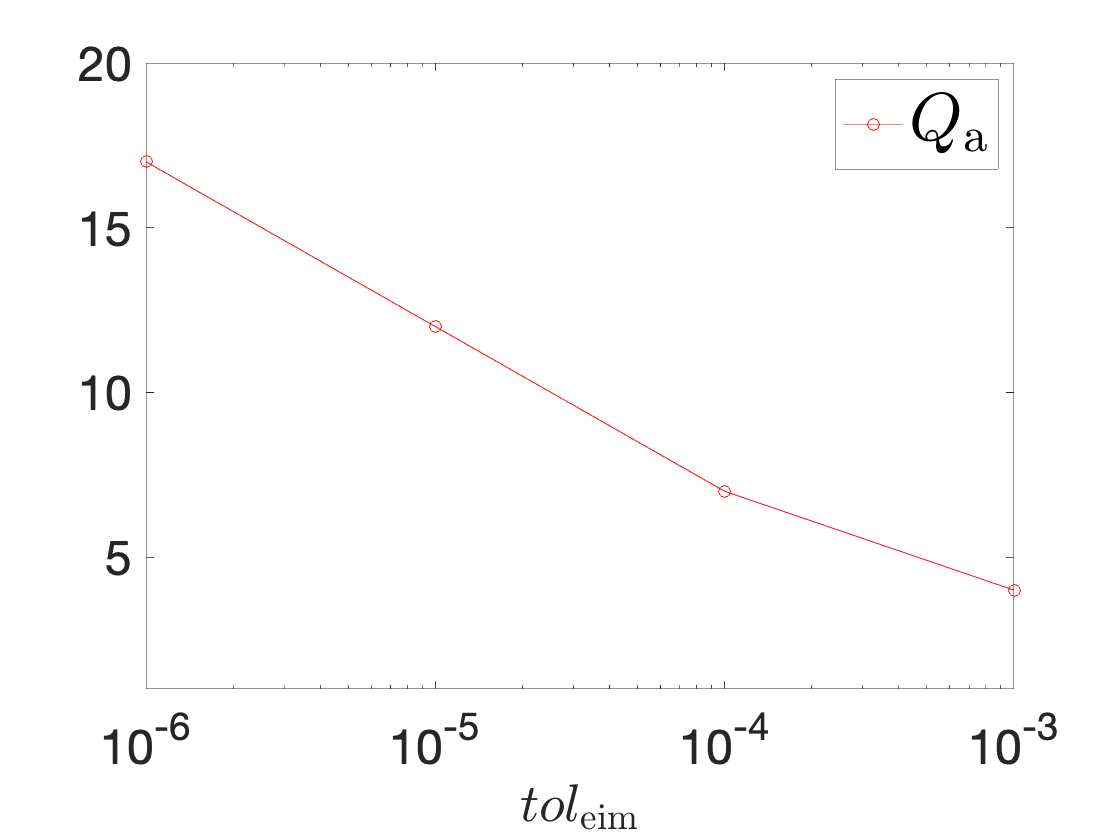}}  
 
\caption{Laplace's equation in parameterized domain; performance of the registration procedure. 
(a):
behavior of $E_{\rm avg}$ \eqref{eq:Eavg_diff} with $tol_{\rm eim}$
($n_{\rm test}=50$).
(b):
behavior of $Q_{\rm a}$  in \eqref{eq:affine_diff_eq_EIM} with $tol_{\rm eim}$.
}
 \label{fig:registration_laplace_2}
\end{figure} 

In the supplementary material (cf. section \ref{sec:RBF_laplace}), we further provide a numerical comparison of our approach with a representative mapping procedure for a given value of $\mu$.

\section{Summary and discussion}
\label{sec:conclusions}
 In this work, we proposed a general (i.e., independent of the underlying PDE model) registration procedure, and we applied it to data compression and geometry reduction:
in data compression, our registration procedure is used in combination with a linear compression method to devise a \emph{Lagrangian nonlinear compression method}; 
in geometry reduction, our registration procedure is used to build a parametric mapping from a reference domain to a family of parameterized domains $\{ \Omega_{\mu} \}_{\mu \in \mathcal{P}}$.
Several numerical results empirically motivate our proposal.
Although the examples considered are rather academic, we believe that our results demonstrate the applicability 
ìof our approach to a broad class of relevant problems in science and engineering. 

In the future, we wish to extend the approach in several directions.
First, in this work, we chose the reference field $\bar{u}$ (cf. section \ref{sec:data_compression}) 
and the reference domain $\Omega$ (cf. section \ref{sec:geometry_reduction})
that enters in the optimization statement \emph{a priori}; in the future, we wish to develop automatic procedures to adaptively choose $\bar{u}$ and $\Omega$.
Second, in order to tackle problems with more complex parametric behaviors, we wish to develop partitioning techniques that leverage the use of multiple references,
{and we also wish to couple the approach with domain decomposition techniques.}
Third, as discussed in section \ref{sec:registration}, a major limitation of our approach is the need for several offline simulations to build the mapping $\boldsymbol{\Phi} $. To address this issue, we wish to exploit recent advances in multi-fidelity approaches to reduce the offline computational burden. Alternatively, we wish to assess performance of projection methods to simultaneously learn mapping and solution coefficients during the online stage.
{Fourth, the registration procedure for data compression has been developed for $\Omega=(0,1)^2$: we aim to combine the proximity measures \eqref{eq:L2obj} and \eqref{eq:mapping_obj} to deal with more complex domains. Furthermore, we aim to extend Proposition \ref{th:extension_sad} to assess the sensitivity of the registration procedure to perturbations.
}
Finally, we wish to study the approximation properties of  Lagrangian  methods based on problem-dependent mappings for a range of parametric problems. Examples in section \ref{sec:NM_widths_examples}  illustrate the effectivity of Lagrangian mappings for elementary  one- and two-dimensional problems; in the future, we aim to investigate whether it is possible to prove approximation results for a broader class of problems.

\appendix

\section*{Acknowledgments}
The author thanks
Prof. Angelo Iollo (IMB, Inria),   Prof. Pierre Mounoud (IMB), and
Prof. Anthony Patera (MIT)  for fruitful discussions.
 
\appendix

\section{Proof of Proposition \ref{th:mapping_general}}
\label{sec:proof_technical}
 
 \subsection{Preliminaries}
\label{sec:preliminary_top}

Given the set $U \subset \mathbb{R}^d$, we denote by $\mathcal{O}_{U}$ the induced (or subspace) topology on $U$,
$
\mathcal{O}_{U} := \{A \cap U: \, A \, {\rm is \, open \, in} \;  \mathbb{R}^d \}.
$
It is possible to show that the ordered pair
$\mathcal{T}_{U} = (U, \mathcal{O}_{U})$ is a topological space. We say that $B$ is open in $U$ if $B \in \mathcal{O}_{U}$; similarly, we say that $B$ is closed in $U$ if the complement of $B$ in $U$,
$B^{\rm c}:= U \setminus B$,
belongs to  $\mathcal{O}_{U}$.
We further say that $\mathcal{T}_{U}$ is connected if it cannot be represented as the union of two more disjoint non-empty open subsets; we say that $\mathcal{T}_{U}$ is path-connected if there exists a path joining any two points in $U$; finally, we say that 
$\mathcal{T}_{U}$ is simply-connected if $\mathcal{T}_{U}$ is path-connected and every path between two points can be continuously transformed into any other such path while preserving the endpoints.  Next three results are key for our discussion.  Theorem \ref{th:connected_sets} is a standard result in Topology that can be found in \cite{mendelson1990introduction}, while  Theorem \ref{th:hadamard} is known as 
Hadamard's implicit function theorem and is proven in  \cite[Chapter 6]{krantz2012implicit}.
On the other hand, we report the proof of  Lemma \ref{th:trivial_lemma}.

\begin{theorem}
\label{th:connected_sets}
A set $A$ in a topological space $\mathcal{T}_{U}$ is open and closed if and only if $\partial A = \emptyset$.
Furthermore, the topological space $\mathcal{T}_{U}$ is connected if and only if the only open and closed sets are the empty set and $U$.  
\end{theorem}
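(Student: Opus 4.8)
The plan is to prove the two assertions separately, relying only on elementary properties of the subspace topology $\mathcal{T}_U = (U, \mathcal{O}_U)$. First I would recall the standard characterizations of the boundary in terms of interior and closure: writing ${\rm int}(A)$ for the largest open (in $\mathcal{T}_U$) subset of $A$ and $\overline{A}$ for the smallest closed (in $\mathcal{T}_U$) superset of $A$, one has $\partial A = \overline{A} \setminus {\rm int}(A)$ together with the universal inclusions ${\rm int}(A) \subseteq A \subseteq \overline{A}$. I would also record the complementation rules in $\mathcal{T}_U$: the complement $A^{\rm c} = U \setminus A$ of an open set is closed and vice versa, and dually $\overline{A^{\rm c}} = ({\rm int}(A))^{\rm c}$.

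For the first assertion, if $A$ is both open and closed, then ${\rm int}(A) = A = \overline{A}$, so $\partial A = \overline{A} \setminus {\rm int}(A) = A \setminus A = \emptyset$. Conversely, if $\partial A = \emptyset$, then $\overline{A} \subseteq {\rm int}(A)$; combined with the universal inclusions ${\rm int}(A) \subseteq A \subseteq \overline{A}$, this sandwiches all three sets together and forces ${\rm int}(A) = A = \overline{A}$, i.e. $A$ is simultaneously open and closed. This establishes the equivalence: $A$ is open and closed in $\mathcal{T}_U$ if and only if $\partial A = \emptyset$.

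For the second assertion, I would argue by contraposition in both directions. If $\mathcal{T}_U$ fails to be connected, then by definition $U = V \cup W$ with $V, W$ open, nonempty and disjoint; since $W = V^{\rm c}$ is open, $V$ is also closed, so $V$ is a clopen set with $V \neq \emptyset$ and $V \neq U$ (the latter because $W \neq \emptyset$). Hence a nontrivial clopen set exists. Conversely, if there is a clopen $A$ with $\emptyset \neq A \neq U$, then $A^{\rm c}$ is also clopen and nonempty, and $U = A \cup A^{\rm c}$ exhibits $U$ as the disjoint union of two nonempty open sets, so $\mathcal{T}_U$ is not connected. Taking contrapositives yields the claim: $\mathcal{T}_U$ is connected if and only if the only sets that are both open and closed are $\emptyset$ and $U$.

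I do not expect any genuine obstacle here, since the result is purely point-set topological and each step is a one-line manipulation; indeed the statement is cited as standard in \cite{mendelson1990introduction}. The only point requiring mild care is to keep the interior/closure/complement relations consistently inside the subspace topology $\mathcal{O}_U$ rather than in the ambient $\mathbb{R}^d$, because $\mathcal{T}_U$ — and hence which subsets count as open, closed, or clopen — is defined relative to $U$, so e.g. a set clopen in $\mathcal{T}_U$ need not be clopen in $\mathbb{R}^d$.
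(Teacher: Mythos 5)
Your proof is correct. Note, however, that the paper does not actually prove this statement: Theorem~\ref{th:connected_sets} is quoted as a standard point-set topology result and attributed to \cite{mendelson1990introduction}, so there is no in-paper argument to compare against; your write-up simply supplies what the paper delegates to the reference. Your two-part argument --- the sandwich ${\rm int}(A) \subseteq A \subseteq \overline{A}$ combined with $\partial A = \overline{A} \setminus {\rm int}(A)$ for the first claim, and the complementation/contraposition argument (a nontrivial clopen set yields a disconnection $U = A \cup A^{\rm c}$, and conversely) for the second --- is precisely the standard textbook proof, and you correctly keep all notions (interior, closure, boundary, clopenness) relative to the subspace topology $\mathcal{O}_U$, which is indeed the one place where care is required, both here and where the paper later applies the theorem inside Lemma~\ref{th:trivial_lemma}.
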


\begin{theorem}
\label{th:hadamard}
Let $M_1,M_2$ be smooth and connected $d$-dimensional manifolds. Suppose $\boldsymbol{\Phi}: M_1 \to M_2$ is a $C^1$ function such that
(i) $\boldsymbol{\Phi}$ is proper (i.e., for any compact set $K \subset M_2$,  $\boldsymbol{\Phi}^{-1}(K)$ is compact in $M_1$),  (ii) the Jacobian matrix of $\boldsymbol{\Phi}$ is everywhere invertible, and (iii) $M_2$ is simply connected. Then, $\boldsymbol{\Phi}$ is a homeomorphism (hence globally bijective).
\end{theorem}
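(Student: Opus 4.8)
The plan is to show that $\boldsymbol{\Phi}$ is a covering map and then to use simple connectedness of $M_2$ to conclude that this covering has a single sheet, whence $\boldsymbol{\Phi}$ is a homeomorphism. First I would invoke the inverse function theorem: since $\boldsymbol{\Phi}$ is $C^1$ and its Jacobian is invertible at every point $p \in M_1$, each $p$ has a neighborhood on which $\boldsymbol{\Phi}$ restricts to a diffeomorphism onto an open subset of $M_2$. Hence $\boldsymbol{\Phi}$ is a local homeomorphism, and in particular an open map. This reduces the global statement to a local-to-global gluing problem, which is exactly what properness will control.

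Next I would exploit hypothesis (i). On locally compact Hausdorff spaces — and $d$-dimensional manifolds are such — any proper continuous map is closed; I would record this as a short lemma, the point being that properness lets one extract convergent subsequences of preimages. Being simultaneously open and closed, the image $\boldsymbol{\Phi}(M_1)$ is a nonempty open-and-closed subset of the connected space $M_2$, so by Theorem \ref{th:connected_sets} it equals $M_2$: the map is surjective. At the same time, for each $y \in M_2$ the fiber $\boldsymbol{\Phi}^{-1}(y)$ is compact (properness applied to the singleton $\{y\}$) and discrete (local injectivity from the first step isolates each of its points), hence finite.

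The technical core — and the step I expect to be the main obstacle — is upgrading these facts to the covering-map property. Given $y \in M_2$ with fiber $\boldsymbol{\Phi}^{-1}(y) = \{x_1, \dots, x_k\}$, I would choose pairwise disjoint open neighborhoods $V_i \ni x_i$ on each of which $\boldsymbol{\Phi}$ is a homeomorphism onto an open set. Then $K := M_1 \setminus \bigcup_i V_i$ is closed and misses the entire fiber, so the closed-map property gives that $\boldsymbol{\Phi}(K)$ is closed and avoids $y$; consequently $W := \bigl(\bigcap_i \boldsymbol{\Phi}(V_i)\bigr) \setminus \boldsymbol{\Phi}(K)$ is an open neighborhood of $y$ that is \emph{evenly covered}, meaning $\boldsymbol{\Phi}^{-1}(W)$ decomposes as the disjoint union of the sheets $V_i \cap \boldsymbol{\Phi}^{-1}(W)$, each mapped homeomorphically onto $W$. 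Checking that no preimage of $W$ escapes $\bigcup_i V_i$ is precisely where properness (through the closed-map property) is indispensable, and getting this bookkeeping right is the delicate point of the whole argument.

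Finally I would close with covering-space theory. Manifolds are connected, locally path-connected, and locally simply connected, so $\boldsymbol{\Phi}: M_1 \to M_2$ is a genuine covering map whose number of sheets is locally constant, hence constant on the connected base $M_2$. Since $M_2$ is simply connected, $\pi_1(M_2)$ is trivial and the number of sheets equals its order, namely $1$; equivalently, the unique-lifting property shows that any two points in a common fiber are joined by a lift of a null-homotopic loop and must therefore coincide. Thus $\boldsymbol{\Phi}$ is injective, and a surjective injective local homeomorphism is a homeomorphism, which is the desired conclusion.
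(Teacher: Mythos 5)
Your proof is correct, but there is nothing in the paper to compare it against: the paper never proves Theorem \ref{th:hadamard}. It invokes it as a known result (Hadamard's global inverse function theorem) and refers to \cite[Chapter 6]{krantz2012implicit} for the proof, reserving its own arguments for Lemma \ref{th:trivial_lemma} and Proposition \ref{th:mapping_general}. Your covering-space argument is the standard proof of this theorem (often attributed to Hadamard--Caccioppoli), and its logical skeleton is sound and in the right order: the inverse function theorem makes $\boldsymbol{\Phi}$ a local homeomorphism, hence open; properness makes it closed (manifolds are locally compact Hausdorff), so the image is open and closed in $M_2$ and, by Theorem \ref{th:connected_sets} and connectedness, equals $M_2$; each fiber is compact and discrete, hence finite; and the ``stack of records'' construction $W := \bigl(\bigcap_i \boldsymbol{\Phi}(V_i)\bigr) \setminus \boldsymbol{\Phi}(K)$ with $K = M_1 \setminus \bigcup_i V_i$ produces an evenly covered neighborhood, the inclusion $\boldsymbol{\Phi}^{-1}(W) \subseteq \bigcup_i V_i$ being exactly where closedness is indispensable --- you identified that correctly. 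The concluding step (a covering of a simply connected, locally path-connected base by a connected total space has one sheet) is standard. Two details worth stating explicitly in a full write-up: the $V_i$ can be chosen pairwise disjoint because the fiber is finite and manifolds are Hausdorff, and each sheet $V_i \cap \boldsymbol{\Phi}^{-1}(W)$ maps homeomorphically onto $W$ precisely because $W \subseteq \boldsymbol{\Phi}(V_i)$ by construction; also note that surjectivity must be established before the fiber argument, as you did, since otherwise fibers could be empty.
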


\begin{lemma}
\label{th:trivial_lemma}
Let 
$A,B,U$ be connected,  open and Lipschitz domains
 such that $\partial A = \partial B$, and
 $\overline{A \cup B}$ is strictly contained in $\overline{U}$. Then, $A=B$.
\end{lemma}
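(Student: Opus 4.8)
The plan is to set $\Gamma := \partial A = \partial B$ and to exploit the fact that, since $A$ and $B$ are open, both are disjoint from $\Gamma$ and hence contained in the open set $\mathbb{R}^d \setminus \Gamma$. The strategy is to first show that $A$ and $B$ are each a \emph{connected component} of $\mathbb{R}^d \setminus \Gamma$, and then to argue that two such components sharing the entire boundary $\Gamma$ must in fact coincide, the only alternative being ruled out by the hypothesis $\overline{A \cup B} \subsetneq \overline{U}$.

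First I would prove that $A$ is a connected component of $\mathbb{R}^d \setminus \Gamma$. Let $C$ denote the component of $\mathbb{R}^d \setminus \Gamma$ that contains the connected set $A$. Then $A$ is open in $C$, and it is also closed in $C$ because $\overline{A} \cap C = (A \cup \Gamma) \cap C = A$, using $\Gamma \cap C = \emptyset$. Since $A$ is a nonempty subset of the connected space $C$ that is both open and closed, Theorem \ref{th:connected_sets} gives $A = C$. The same argument applies to $B$. Consequently $A$ and $B$ are connected components of the one space $\mathbb{R}^d \setminus \Gamma$, so they are either identical (in which case the proof is complete) or disjoint.

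It remains to rule out the disjoint case $A \cap B = \emptyset$, and this is the step I expect to be the main obstacle, since it relies on the local geometry of the Lipschitz boundary. Fix $p \in \Gamma$. Because $A$ is a Lipschitz domain, in a small ball $B_r(p)$ the boundary $\Gamma$ is, up to a rotation, the graph of a Lipschitz function, so $B_r(p) \setminus \Gamma$ splits into exactly two connected pieces, one of which lies in $A$. Since $p \in \partial B$ as well, $B$ meets every neighborhood of $p$; as $B$ is disjoint from both $\Gamma$ and $A$, these nearby points of $B$ must lie in the remaining piece, which, being connected and meeting the component $B$, is contained in $B$. Hence $B_r(p) \subseteq A \cup \Gamma \cup B$, so every point of $\Gamma$ is interior to $A \cup \Gamma \cup B$.

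Finally I would combine this with the openness of $A$ and $B$ to conclude that $A \cup \Gamma \cup B = \overline{A} \cup \overline{B} = \overline{A \cup B}$ is open; being also closed and nonempty in the connected space $\mathbb{R}^d$, it must equal $\mathbb{R}^d$ by Theorem \ref{th:connected_sets}. But then $\overline{A \cup B} = \mathbb{R}^d \supseteq \overline{U}$ would force $\overline{U} = \overline{A \cup B}$, contradicting the strict inclusion $\overline{A \cup B} \subsetneq \overline{U}$. Therefore the disjoint case cannot occur, and $A = B$. The only delicate points to verify carefully are the two-sidedness of the Lipschitz boundary and the claim that the second local piece is genuinely captured by $B$ rather than by some third component of $\mathbb{R}^d \setminus \Gamma$.
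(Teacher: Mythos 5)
Your proof is correct and takes essentially the same route as the paper's: a clopen/connectedness argument (the paper applies Theorem \ref{th:connected_sets} to $A\cap B$ inside $A$, you apply it to $A$ and $B$ inside their components of $\mathbb{R}^d\setminus\Gamma$) yields the dichotomy $A=B$ or $A\cap B=\emptyset$, and the disjoint case is excluded exactly as in the paper, using the two-sidedness of the Lipschitz boundary to show every point of $\Gamma$ is interior to $\overline{A\cup B}$, so that $\overline{A\cup B}$ is clopen in the ambient connected space and must fill it, contradicting the strict inclusion in $\overline{U}$. The only cosmetic difference is that you run the final contradiction in $\mathbb{R}^d$ (getting $\overline{A\cup B}=\mathbb{R}^d$) while the paper runs it inside $U$ (getting $\overline{A\cup B}=U$); both clash with the hypothesis in the same way.
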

\begin{proof}
Let $C= A \cap B$. Clearly, $C$ is open in $A$ (since it is the intersection of open sets); furthermore, $C$ is closed in $A$ since $\partial A  = \partial B$. It thus follows 
from Theorem \ref{th:connected_sets} that $C$ is either the empty set or $A$.

By contradiction, suppose that $C= \emptyset$, and let us define $D=A \cup B$. Since $A,B$ are open, we have
$\partial (\overline{A \cup B}) \subset \partial A \cup \partial B = \partial A$. Let $\mathbf{x} \in \partial A$: since (i) $\partial A = \partial B$, (ii) $A \cap B= \emptyset$, and (iii) the boundary of $A$ is smooth, there exists $\epsilon>0$ such that 
$\mathcal{B}_{\epsilon}(\mathbf{x}) \subset \overline{A \cup B}$; therefore,  $\mathbf{x} \notin \partial (\overline{A \cup B})$, which implies that $\partial (\overline{A \cup B})  = \emptyset$ and thus (exploiting Theorem \ref{th:connected_sets}) $\overline{A \cup B} = U$. The latter contradicts the hypothesis that $\overline{A \cup B}$ is  a proper subset of  $U$: we can thus conclude that $C= A$. 

Exploiting the same argument, we can prove that $C=B$. Thesis follows by applying the transitive property: $A=C, B=C \Rightarrow A=B$. 
\end{proof}

\subsection{Proof}
\label{sec:proof}

We split the proof in four parts.
\medskip

\textbf{1.  $\mathbf{\boldsymbol{\Phi}(\overline{{U}}) \subseteq  \overline{V}}$,  $\mathbf{U}=\mathbf{V}=
\mathbf{\widehat{\Omega}}$.}
Given   $U = 
\widehat{\Omega} = 
\{ \mathbf{x} \in \mathbb{R}^d: \, f(\mathbf{x}) < 0  \} $ where $f$ is convex,
we define $g(\mathbf{X})=f(\boldsymbol{\Phi}(\mathbf{X}))$. 
We denote by $\mathbf{X}^{\star}$ a global maximum of $g$ in $\overline{U}$, and we define $\mathbf{x}^{\star}= \boldsymbol{\Phi}(\mathbf{X}^{\star})$. Since $\boldsymbol{\Phi}$ is locally invertible at $\mathbf{X}^{\star}$, if $\mathbf{X}^{\star}$ belongs to the interior of $U$,
we must have that $\mathbf{x}^{\star}$ is a local maximum of $f$: this is not possible due to the fact that $f$ is convex. As a result, we must have $\mathbf{X}^{\star} \in \partial U$: recalling (iii), we then find
$$
\max_{\mathbf{X} \in \overline{U}} \;
f \left(  \boldsymbol{\Phi}(\mathbf{X}) \right)
\, = \, 
\max_{\mathbf{X} \in \partial  U } \; f \left( \boldsymbol{\Phi}(\mathbf{X})  \right) 
 \, \overset{\rm (iii)}{\leq} \, 
\max_{\mathbf{x} \in \partial  U } \; f \left( \mathbf{x} \right) = 0,
$$
which implies $\boldsymbol{\Phi}(\mathbf{X}) \in \overline{U}$ for all $\mathbf{X} \in \overline{U}$.
\medskip 

  \textbf{2.  $\mathbf{\boldsymbol{\Phi}(\overline{U}) =  \overline{V}}$,  $\mathbf{U=V = \widehat{\Omega}}$.}
Recalling Theorem \ref{th:connected_sets}, we shall simply prove that  $\boldsymbol{\Phi}( 	\overline{U}  )$ is  open and closed in   $\overline{U}$. 
  
  $\boldsymbol{\Phi}(  \overline{U}   )$ is closed in 
$\overline{U} $: since $\boldsymbol{\Phi}$ is continuous, $\boldsymbol{\Phi}(  \overline{U}   )$ is closed in $\mathbb{R}^d$; since $\boldsymbol{\Phi}(\overline{U}) \subseteq   \overline{U}$, we then find that 
 $\boldsymbol{\Phi}(\overline{U})$ is closed with respect to the topology of   $\mathcal{T}_{\overline{U}}$.
 
  $\boldsymbol{\Phi}(  \overline{U}   )$ is  open in   $\overline{U}$.
  To show this, it suffices to prove that, for any $\mathbf{x} \in \boldsymbol{\Phi}(  \overline{U}   )$, there exists $B \in \mathcal{O}_{\overline{U}}$ such that $B \subset  \boldsymbol{\Phi}(  \overline{U})$ and $\mathbf{x} \in B$. 
 Given $\mathbf{x} \in  \boldsymbol{\Phi}(\overline{U})$, we denote by $\mathbf{X} \in \overline{U}$ a point such that $\mathbf{x} = \boldsymbol{\Phi}(\mathbf{X})$. 
  If $\mathbf{X}$ belongs to the interior of $U$, the proof is trivial; for this reason, we focus below on the case in which $\mathbf{X} \in \partial U$ (and thus $\mathbf{x} \in \partial U$). Exploiting the local inverse-function theorem, there exists $\bar{\eta}>0$ such that for all $\eta \leq \bar{\eta}$ $\boldsymbol{\Phi}: \mathcal{B}_{\eta}(\mathbf{X}) \to A_{\mathbf{x},\eta}$ is an homeomorphism and $A_{\mathbf{x},\eta}$ is an open set of $\mathbb{R}^d$ containing $\{ \mathbf{x} \}$.   Provided that
\begin{equation}
\label{eq:tricky_condition}
 \boldsymbol{\Phi} \left( \mathcal{B}_{\eta}(\mathbf{X}) \cap \partial U \right)
=
A_{\mathbf{x}, \eta} \cap \partial U,
\end{equation} 
 we find that
 $ \boldsymbol{\Phi} \left( \mathcal{B}_{\eta}(\mathbf{X}) \cap \overline{U}    \right)
= A_{\mathbf{x}, \eta} \cap \overline{U}$, 
  $ \boldsymbol{\Phi} \left( \mathcal{B}_{\eta}(\mathbf{X}) \setminus  \overline{U}    \right)
= A_{\mathbf{x}, \eta}   \setminus \overline{U}$.
 This implies that the set 
   $B:= A_{\mathbf{x},\eta} \cap \overline{U}$
is contained in $\boldsymbol{\Phi} (  \overline{U}  )$.
Since $B$ is open in $\overline{U}$, we obtain    
the desired result.
 
 It remains to prove \eqref{eq:tricky_condition}.  Since  $U$ is Lipschitz,  for sufficiently small values of $\eta$, we have that
$$
\partial \left( \mathcal{B}_{\eta}(\mathbf{X}) \cap \partial U  \right)
= \partial  \mathcal{B}_{\eta}(\mathbf{X}) \cap \partial U,
\quad
 \partial \left( A_{\mathbf{x},\eta} \cap \partial U  \right)
= \partial  A_{\mathbf{x},\eta} \cap \partial U.
$$
Furthermore, these sets 
 are the collection of two distinct points for $U \subset \mathbb{R}^2$, and
closed one-dimensional curves for $U \subset \mathbb{R}^3$.
Since $\boldsymbol{\Phi}(\partial U) \subset \partial U$,
we must have   $\boldsymbol{\Phi} \left(  \partial   \mathcal{B}_{\eta}(\mathbf{X}) \cap \partial U \right) \subset   \partial  A_{\mathbf{x}, \eta} \cap \partial U$; then, since 
$\boldsymbol{\Phi}$ is a local homeomorphism, we conclude  that 
 $  
 \boldsymbol{\Phi} \left(  \partial   \mathcal{B}_{\eta}(\mathbf{X}) \cap \partial U \right)
=
 \partial  A_{\mathbf{x}, \eta} \cap \partial U$.

We define $A_1=  \boldsymbol{\Phi} \left(     \mathcal{B}_{\eta}(\mathbf{X}) \cap  \partial U \right)$  and $A_2 = A_{\mathbf{x}, \eta} \cap \partial U$.
Exploiting once again the fact that $\boldsymbol{\Phi}$ is a local homeomorphism and the fact that $  
 \boldsymbol{\Phi} \left(  \partial   \mathcal{B}_{\eta}(\mathbf{X}) \cap \partial U \right)
=
 \partial  A_{\mathbf{x}, \eta} \cap \partial U$, we find
$$
\partial A_1 = \boldsymbol{\Phi} \left(
\partial   \mathcal{B}_{\eta}(\mathbf{X}) \cap \partial U
\right) = 
 \partial  A_{\mathbf{x}, \eta} \cap \partial U = \partial A_2.
$$
 Since $A_1,A_2$ are connected, open in $\partial U$, Lipschitz, bounded sets with the same boundary, exploiting Lemma \ref{th:trivial_lemma}, we find $A_1=A_2$, which is \eqref{eq:tricky_condition}.
  \medskip

 \textbf{3.  $\mathbf{\boldsymbol{\Phi}(\overline{U}) =  \overline{V}}$.}
 Recalling the definitions of $\boldsymbol{\Upsilon}$ and $\boldsymbol{\Lambda}$, we find that
 $\widehat{\boldsymbol{\Phi}}  = \boldsymbol{\Lambda}^{-1} \circ \left( \boldsymbol{\Phi} \circ \boldsymbol{\Upsilon}  \right)$
  satisfies the hypotheses (i)-(ii)-(iii) of Proposition \ref{th:mapping_general} and thus 
  $\widehat{\boldsymbol{\Phi}}(\overline{ \widehat{\Omega} })
=\overline{ \widehat{\Omega} }$. Since $\boldsymbol{\Lambda}$ and $\boldsymbol{\Upsilon}^{-1}$ are diffeomorphisms, we obtain
$$
\boldsymbol{\Lambda}^{-1} \left(
\boldsymbol{\Phi}
 \left(
\boldsymbol{\Upsilon} (  \overline{ \widehat{\Omega} }   )
\right)
\right)
=
 \overline{ \widehat{\Omega} }  
\, \Rightarrow \,
\boldsymbol{\Phi}
 \left(
\boldsymbol{\Upsilon} (  \overline{ \widehat{\Omega} }   )
\right)
=
\overline{V} \,
\Rightarrow \,
\boldsymbol{\Phi}( \overline{U}   )
=
\overline{V},
$$
 which is the thesis.
\medskip

 \textbf{4.  $\mathbf{\boldsymbol{\Phi}: \overline{U} \to   \overline{V}}$ is bijective.}
From the third  part of the proof, we have that $ \boldsymbol{\Phi} (\overline{U}) = \overline{V}$. Since $U$ is simply connected, proof follows from Theorem \ref{th:hadamard}.   Note that the condition that $\boldsymbol{\Phi}$ is proper follows directly from the fact that $\boldsymbol{\Phi}$ is continuous and $U$ is bounded.

\section{Estimates of Kolmorogov $N$-$M$ widths}
\label{sec:NM_widths_examples}

\subsection{Boundary layers}
\label{remark:BL_example}

 
We consider the one-dimensional problem:
\begin{equation}
\label{eq:1D_BL}
\left\{
\begin{array}{ll}
-\partial_{x x} \, u_{\mu} \, + \, \mu^2 \, u_{\mu} = 0 & {\rm in} \,  \Omega_{\rm 1D}=(0,1) \\
u_{\mu}(0) =1, \quad \partial_x u_{\mu}(1) = 0 , &\\
\end{array}
\right.
\end{equation}
where $\mu \in \mathcal{P}:= [ \mu_{\rm min}, \mu_{\rm max} =  \epsilon^{-2} \mu_{\rm min}  ]$. 
The solution to \eqref{eq:1D_BL} is given by
$$
u_{\mu}(x) =   u_{\mu}^{(1)}(x) + u_{\mu}^{(2)}(x),
\quad
{\rm with} \; \; 
u_{\mu}^{(1)}(x) = \frac{e^{-\mu x}}{1 + e^{-2 \mu}}, \; \; \;
u_{\mu}^{(2)}(x) = \frac{e^{\mu(x-2)}}{1 + e^{-2\mu}}.
$$
We introduce the parametrically-affine ($M=1$) mapping
$$
{\Phi}_{\mu}( {X}) \, = \,
 X \, + \, c_{\mu}   \, \left(
{X} \mathbbm{1}_{[0,\delta)}( {X}) 
\, + \,
\frac{\delta}{\delta-1} \, (X - 1)  \mathbf{1}_{[\delta,1]}(X)
\right),
\qquad
c_{\mu}:= \frac{\bar{\mu} - \mu}{\mu},
$$
which  is a bijection in $[0,1]$ for   $0 < \delta  < \min \left\{  1,  \frac{\mu}{\bar{\mu}}    \right\}$,
 and we set
\begin{equation}
\label{eq:choice_mubar_BL}
\bar{\mu} = \sqrt{\mu_{\rm min} \, \mu_{\rm max}},
\end{equation} 
Note that the choice of $\bar{\mu}$ in \eqref{eq:choice_mubar_BL} 
minimizes the maximum value attained by either the Jacobian of the mapping or by the Jacobian of the  inverse $ {\Phi}_{\mu}^{-1}$ over $\mathcal{P}$ in $X=0$,
$\max_{\mu \in \mathcal{P}} \, 
\max \left\{
\partial_X  {\Phi}_{\mu}(0), \; 
\partial_x  {\Phi}_{\mu}^{-1}(0)
\right\}.$
Then, we find
$$
\begin{array}{l}
\displaystyle{
\|  u_{\bar{\mu}} \, - \, u_{\mu} \circ  {\Phi}_{\mu}
\|_{L^2(\Omega)}
 \leq \; \; 
 \|  u_{\bar{\mu}}^{(1)} \, - \, u_{\mu}^{(1)}  \circ  {\Phi}_{\mu}
\|_{L^2(\Omega)}  \,  +  
\|  u_{\bar{\mu}}^{(2)}  \, - \, u_{\mu}^{(2)}  \circ  {\Phi}_{\mu}
\|_{L^2(\Omega)}  }
\\[3mm]
\leq  
 \displaystyle{
\sqrt{ \int_{\delta}^1 \, \left(
e^{-\bar{\mu} X } \, - \,  e^{- \mu \Phi_{\mu}(X)} \right)^2 \, dX }   +
 \frac{e^{- \mu_{\rm min}}}{1+e^{-2\mu_{\rm min}}} \leq
 \sqrt{1-\delta} \, e^{-\bar{\mu} \delta } 
 +
 \frac{e^{- \mu_{\rm min}}}{1+e^{-2\mu_{\rm min}}}.
}
\\
\end{array}
$$
Note that in the second inequality we used the fact that $e^{-\bar{\mu} X}$ and $e^{- \mu \Phi_{\mu}(X)}$ are monotonic decreasing in $(\delta, 1)$ and 
$e^{-\bar{\mu} \delta} = e^{- \mu \Phi_{\mu}(\delta)} = e^{-\bar{\mu} \delta}$.
We observe that the right-hand side is monotonic decreasing in $\delta$. Given $\epsilon>0$, we thus choose $\delta = {\rm max} \left\{
\delta': \, \partial_X \Phi_{\mu}(1) \in [\epsilon, 1/\epsilon], \; \forall \, \mu \in \mathcal{P} \right\}$: by tedious but straightforward calculations, we obtain $\delta=  \frac{\epsilon}{1+\epsilon}$. In conclusion, we obtain
$$
\begin{array}{rl}
\displaystyle{
\|  u_{\bar{\mu}} \, - \, u_{\mu} \circ  {\Phi}_{\mu}
\|_{L^2(\Omega)} \leq }
&
 \displaystyle{
 \sqrt{ \int_{\delta}^1 \, \left(
e^{-\bar{\mu} X } \, - \,  e^{- \mu \Phi_{\mu}(X)} \right)^2 \, dX } 
+
 \frac{e^{-\mu_{\rm min}}}{1+e^{-2\mu_{\rm min}}}
 } \\[3mm]
  \leq &
 \displaystyle{
  \sqrt{1-\delta} \, e^{-\bar{\mu} \delta } +
 \frac{e^{-\mu_{\rm min}}}{1+e^{-2\mu_{\rm min}}}
 }
\\
\end{array}
$$
and then, recalling the definition of $\delta$ and $\bar{\mu}$,
$$
d_{N=1,M=1,\epsilon}(\mathcal{M}_{\rm u}; L^2(\Omega)) \leq
 \frac{1}{\sqrt{
1 + \epsilon}}
\, {\rm exp} \left( 
- \frac{\mu_{\rm min}}{1+\epsilon}
 \right) \, + \,
  \frac{e^{-\mu_{\rm min}}}{1+e^{-2\mu_{\rm min}}},
$$
which proves \eqref{eq:NM_boundary_layer}.

\subsection{Shock waves}
\label{remark:hyp_example}
We consider the manifold
(\cite[section 5.1]{ohlberger2015reduced}, \cite[Example 2.5]{taddei2015reduced})
$$
\mathcal{M}:=  \{
u(\cdot ; t) = {\rm sign}(\cdot- t): \,
\; \;
t \in  [1/3, 2/3] \} \subset L^2(\Omega_{\rm 1D}=(0,1)),
$$
which is associated with the  transport problem
$$
\left\{
\begin{array}{ll}
\partial_t u  \,  + \, \partial_x \, u \, = \, 0 & x \in \Omega_{\rm 1 D}, \; t \in (1/3,2/3), \\
z(x)|_{t=1/3}= {\rm sign}\left( x - \frac{1}{3} \right), \quad
z(0,t) = -1& x \in \Omega_{\rm 1 D}, \; t \in (1/3,2/3), \\
\end{array}
\right.
$$
with time interpreted as parameter. It is possible to show (see \cite[section 5.1]{ohlberger2015reduced}) that the Kolmogorov $N$-width associated with $\mathcal{M}$ satisfies
$d_N(\mathcal{M}; L^2(\Omega))= \mathcal{O}\left(  \frac{1}{\sqrt{N}} \right)$. On the other hand, if we consider the parametrically-affine ($M=1$) mapping 
\begin{equation}
\label{eq:mapping_hyp}
{\Phi}(X; t) \, := \,
X \, + \,
\left(  t  -  \frac{1}{2}  \right) \left(
1  \, - \, |2 X  - 1| \right),
\end{equation}
we find $\tilde{u}(X,t) = {\rm sign}(2X - 1 )$, which is parameter-independent. This implies that 
$$
d_{N,M,\epsilon}(\mathcal{M}_{\rm u}) = 0,
\quad
\forall \, N,M \geq 1,
\quad
\epsilon \leq \frac{2}{3}.
$$ 

This example suggests that mapping procedures  are well-suited to tackle problems with travelling fronts.
On the other hand, we observe that ${\Phi}$ in \eqref{eq:mapping_hyp} is not well-defined  for $t \geq 1$: the reason is that the  jump discontinuity exits the domain. This shows that effective applications of our approach to general parametric problems might require the 
partition of the   time/parameter domain in several subdomains.

\subsection{A two-dimensional problem}
\label{remark:2D_problem_NM}

{We consider the parametric field
\begin{equation}
\label{eq:boring_example}
u_{\mu}(\mathbf{x}) \, = \, 
\left\{
\begin{array}{ll}
0 & {\rm if} \; x_2 < f_{\mu}(x_1) \\[3mm] 
1 & {\rm if} \; x_2 \geq  f_{\mu}(x_1) \\
\end{array}
\right.
\end{equation}
where $\mathbf{x} = [x_1,x_2] \in \Omega=(0,1)^2$,
 $f_{\mu} \in {\rm Lip}([0,1])$ and 
$f_{\mu}([0,1]) \subset [\delta, 1 - \delta]$, for some $\delta>0$ and for all $\mu \in \mathcal{P} \subset \mathbb{R}^P$. We define the manifold $\mathcal{M}_{\rm f} = \{ f_{\mu}: \mu \in \mathcal{P} \} \subset {\rm Lip}([0,1])$,  and we define  the  $M$-term approximation
  $\widehat{f}$  of $f$,
$\widehat{f}_{\mu}(x)=\sum_{m=1}^M \, (\widehat{\mathbf{a}}_{\mu})_m \, \varphi_m(x)$; then, we define the mapping
$$
\widehat{\boldsymbol{\Phi}}_{\mu}(\mathbf{X}) = \mathbf{X} \, + \, \left(\widehat{f}_{\mu}(X_1)  - \frac{1}{2} \right)
\left[
\begin{array}{l}
0 \\
1 - |2 X_2 - 1| \\
\end{array}
\right]
$$
Exploiting the definition of $\widehat{f}$, we find that, for all $\mu \in \mathcal{P}$,
\begin{equation}
\label{eq:special_mappings}
\widehat{\boldsymbol{\Phi}}_{\mu}(\mathbf{X}) = \mathbf{X} + \boldsymbol{\varphi}(\mathbf{X}), \qquad
\boldsymbol{\varphi} \in 
\mathcal{Y}_M: =
{\rm span} \left\{
\boldsymbol{\varphi}_m 
\right\}_{m=1}^M,
\end{equation}
where $\boldsymbol{\varphi}_m(\mathbf{X}) \, := \, 
(\varphi_m(X_1) - 1/2) \, (1  - |2 X_2  - 1|) \mathbf{e}_2
$ for $m=1,\ldots,M$.
Furthermore, we find
$$
{\rm det}\left(
\widehat{\nabla} \boldsymbol{\Phi} (\mathbf{X})
\right) \, = \, \left\{
\begin{array}{ll}
2 \widehat{f}_{\mu}(X_1) & {\rm if} \, X_2 < \frac{1}{2} \\[2mm]
2 (1 - \widehat{f}_{\mu}(X_1) ) & {\rm if} \, X_2 \geq \frac{1}{2} \\ 
\end{array}
\right.
$$
Recalling Corollary \ref{th:application_unit_square}, provided that
$\widehat{f}_{\mu}([0,1]) \subset [1/2 \epsilon, 1 - 1/2 \epsilon]$ for some $0 <\epsilon < 2 \delta$ and all $\mu \in \mathcal{P}$,
we obtain that 
 $\widehat{\boldsymbol{\Phi}}_{\mu} \in \mathcal{Y}_M^{\rm bis, \epsilon}$ (cf.  \eqref{eq:kolmogorov_NM_width_a})
for all $\mu \in \mathcal{P}$.

By tedious but straightforward calculations, we find that
$\widetilde{u}_{\mu} = u_{\mu} \circ \boldsymbol{\Phi}_{\mu}$ satisfies
$$
\widetilde{u}_{\mu}(\mathbf{X}) \, = \, 
\left\{
\begin{array}{ll}
0 & {\rm if} \, X_2 < \Lambda_{\mu}(X_1), \\[2mm]
1 & {\rm if} \, X_2 \geq \Lambda_{\mu}(X_1); \\
\end{array}
\right.
\; \; 
\Lambda_{\mu}(x) \, = \, 
\left\{
\begin{array}{ll}
\frac{1}{2} \, + \, \frac{f_{\mu}(x) \, - \, \widehat{f}_{\mu}(x)}{2(1 - \widehat{f}_{\mu}(x))} &
{\rm if} \, \widehat{f}_{\mu}(x) < f_{\mu}(x) \\[2mm]
\frac{1}{2} \, + \, \frac{f_{\mu}(x) \, - \, \widehat{f}_{\mu}(x)}{2  \widehat{f}_{\mu}(x) } &
{\rm if} \, \widehat{f}_{\mu}(x) >  f_{\mu}(x) \\
\end{array}
\right.
$$
Note that, recalling that
$f_{\mu}([0,1]) \subset [\delta, 1 - \delta]$ and
$\widehat{f}_{\mu}([0,1]) \subset [1/2 \epsilon, 1 - 1/2 \epsilon]$ uniformly in $\mu$,  
\begin{equation}
\label{eq:estimate1_NM_width}
\Lambda_{\rm max, \mu} - \Lambda_{\rm min, \mu} \lesssim
\| \widehat{f}_{\mu}  - f_{\mu}  \|_{L^{\infty}(0,1)},
\quad
\| \Lambda_{\mu}'   \|_{L^{\infty}(0,1)}  \lesssim
\| \widehat{f}_{\mu}  - f_{\mu}  \|_{{\rm Lip}([0,1])}, 
\end{equation}
where
$\Lambda_{\rm max, \mu}= \max_{ X_1 \in [0,1]} \, \Lambda_{\mu}(X_1)$ and
$ \Lambda_{\rm min, \mu}=\min_{ X_1 \in [0,1]} \, \Lambda_{\mu}(X_1)$.

To estimate the $N$-width of $\widetilde{\mathcal{M}}_{\rm u}=
\{  \widetilde{u}_{\mu} : \, \mu \in \mathcal{P} \}$, we partition
$[0,1]$   and
$[\min_{\mu} \Lambda_{\mu,\rm min}, \, 
\max_{\mu} \Lambda_{\mu,\rm max}   ]$  in $n-1$ uniform intervals,
$I_i^x= (\bar{x}_i, \bar{x}_{i+1})$
$I_j^y= (\bar{y}_j, \bar{y}_{j+1})$ $i,j=1,\ldots,n$.
Then, we introduce the $N=n^2$ functions
$$
\widetilde{\zeta}_{i,j}(\mathbf{X}) \, = \, \left\{
\begin{array}{ll}
1 & {\rm if} \, X_1 \in I_i^x, \; X_2 > \bar{y}_j \\
0 &{\rm otherwise} \\
\end{array}
\right.
\; \; i,j = 1, \ldots, n.
$$
We observe that
\begin{equation}
\label{eq:estimate2_NM_2D}
\begin{array}{l}
\displaystyle{
\min_{\boldsymbol{\alpha} \in \mathbb{R}^{n,n}} \, \| \tilde{u}_{\mu} \,  - \sum_{i,j} \alpha_{i,j} \, \widetilde{\zeta}_{i,j}  \|_{L^1(\Omega)}  \leq 
\sum_{i=1}^n \, \min_{j=1,\ldots, n} \, 
\int_{I_i^x} \, |y_j - \Lambda_{\mu}(x) | \, dx
 }
\\[2mm]
\displaystyle{ 
\leq 
\left(
\sum_{i=1}^n \, 
\frac{1}{n} \, 
\min_{j=1,\ldots, n} \,  |y_j - \Lambda_{\mu}(\bar{x}_{i+1/2}) | 
\, + \, 
\int_{I_i^x} \, 
\left|
\Lambda_{\mu}(x) \, - \, \Lambda_{\mu}(\bar{x}_{i+1/2}
\right| \, dx
\right) 
 }
\\[2mm]
\displaystyle{ 
\leq 
\frac{ \Lambda_{\rm max, \mu} - \Lambda_{\rm min, \mu} + 
\| \Lambda_{\mu}' \|_{L^{\infty}(0,1)}   }{\sqrt{N}}.
 }
\\
\end{array}
\end{equation}
Since \eqref{eq:estimate2_NM_2D} holds for any choice of $\boldsymbol{\Phi}$ satisfying \eqref{eq:special_mappings}, provided that $M$ is large enough so that there exists an $M$-term approximation satisfying $\sup_{\mu \in \mathcal{P}} \, \|   \widehat{f}_{\mu} - f_{\mu} \|_{{\rm Lip}([0,1])} $
$\leq \delta - 1/2 \epsilon$, we find that
$$
\begin{array}{rl}
\displaystyle{
d_{N,M, \epsilon}\left(   
\mathcal{M}_{\rm u} , L^1(\Omega)
\right)
\lesssim
}
&
\displaystyle{
\frac{1}{\sqrt{N}} \,  
\inf_{
\substack{\mathcal{F}_M \subset {\rm Lip}([0,1]), \;  \\ 
{\rm dim}(\mathcal{F}_M) = M \\
}}
\sup_{\mu \in \mathcal{P}} \, \inf_{\widehat{f} \in \mathcal{F}_M} \, \| f_{\mu} - \widehat{f}_{\mu}  \|_{{\rm Lip}([0,1])}
}
\\[3mm]
=
&
\displaystyle{
\frac{1}{\sqrt{N}}  \, d_N \left( 
\mathcal{M}_{\rm f},   \, {\rm Lip}([0,1]) \right),
}
\\
\end{array}
$$
which is \eqref{eq:NM_2D}.
}

\section{DG discretization of  \eqref{eq:model_problem_strong}}
\label{sec:DG_hyp_app}
 We denote by 
$\mathbb{P}^{\kappa}(\widehat{\texttt{D}})$ the space of polynomials of degree at most $\kappa$ on the triangle
$\widehat{\texttt{D}}$ with vertices $(0,0)-(1,0)-(0,1)$; then,  we define the broken DG space 
$$
\mathcal{X}:=  \left\{
v \in L^2(\Omega): \,
v|_{  {\texttt{D}}^k   } \, = \,
\widehat{v} \circ \boldsymbol{\Psi}_k^{\rm fe}, \; \;
\widehat{v} \in   \mathbb{P}^{\kappa}(\widehat{\texttt{D}}),
 \; k=1,\ldots, n_{\rm el}
\right\},
$$
where  $\{ \texttt{D}^k  \}_{k=1}^{n_{\rm el}}$ are the elements of the mesh and 
$\boldsymbol{\Psi}_k^{\rm fe}: \texttt{D}^k \to \texttt{D}$ are the local FE mappings. 
For each edge $e$ of the mesh, we define the positive (resp. negative) normal $\mathbf{n}^+$
(resp. $\mathbf{n}^-$); given $w \in \mathcal{X}$ and the mesh edge $e$, we define the positive and negative limits $w^+, w^-$ and the edge average and jump
$$
w^{\pm}(\mathbf{x}) =
\lim_{\epsilon \to 0^+ } \, w(\mathbf{x}  - \epsilon \mathbf{n}^{\pm}(\mathbf{x})),
\quad
\{  w \}:= \frac{w^+ + w^-}{2},
\quad
\mathbf{J} w:= \mathbf{n}^+ w^+ \, + \, \mathbf{n}^- w^-,
$$
for all $\mathbf{x} \in e$
If $\mathbf{x} \in e \subset \partial \Omega$, we set $\{  w \}:=  w$ and $\mathbf{J} w:= \mathbf{n} w$. 

Then, we can introduce the high-fidelity DG discretization of \eqref{eq:model_problem_strong}: find $z_{\mu} \in 
\mathcal{X}$ such that
\begin{subequations}
\label{eq:weak_formulation_hyp}
\begin{equation}
\label{eq:weak_formulation_hyp_a}
\mathcal{G}_{\mu}(z_{\mu}, v):= \mathcal{A}_{\mu}(z_{\mu}, v) - \mathcal{F}_{\mu}( v) = 0,
\quad
 \forall \, v \in \mathcal{X},
\end{equation}
where
\begin{equation}
\label{eq:weak_formulation_hyp_b}
\left\{
\begin{array}{l}
\displaystyle{
\mathcal{A}_{\mu}(w,v) \, = \, 
\sum_{k=1}^{n_{\rm el}} \, 
\int_{\texttt{D}^k} \, w \, \left( \sigma_{\mu} v - \mathbf{c}_{\mu} \cdot \nabla v \right) \, dx \, + \, 
\int_{\partial \texttt{D}^k} \, 
\mathcal{H}(w, \mathbf{n}) \, v  \, dx 
},
\\[3mm]
\displaystyle{
\mathcal{F}_{\mu}(w,v) \, = \, 
\sum_{k=1}^{n_{\rm el}} \, 
\int_{\texttt{D}^k} \, f_{\mu} \,  v   \, dx \, - \, 
\int_{\partial \texttt{D}^k} \, 
f_{\mu}^{\rm ed}\, v  \, dx 
},
\\
\end{array}
\right.
\end{equation}
Here,  $f_{\mu}^{\rm ed}(\mathbf{x}) = \delta_{\rm in,\mu} \mathbf{c}_{\mu} \cdot \mathbf{n} \, z_{\rm D, \mu}$,
where $\delta_{\rm in,\mu} (\mathbf{x})=1$ if $\mathbf{x} \in \Gamma_{\rm in, \mu}$ and 
$\delta_{\rm in,\mu} (\mathbf{x})=0$ otherwise, while the flux $\mathcal{H}$ is given by
\begin{equation}
\mathcal{H}(w, \mathbf{n}) :=
\left\{
\begin{array}{ll}
\mathbf{c}_{\mu}\cdot \mathbf{n} \{ w  \} \,  + \, \frac{1}{2} \tau_{\mu} \mathbf{n} \cdot (\mathbf{J} w)
& {\rm on} \, \partial \texttt{D}^k \setminus \partial \Omega \\[3mm]
\mathbf{c}_{\mu}\cdot \mathbf{n}   w   \, \delta_{\rm in,\mu} 
& {\rm on} \, \partial \texttt{D}^k \cap  \partial \Omega \\
\end{array}
\right.
\end{equation}
with $\tau_{\mu}= |\mathbf{c}_{\mu} \cdot \mathbf{n} |$. 
\end{subequations}

Then, we introduce
\begin{equation}
\label{eq:another_formulation_a}
\begin{array}{ll}
\displaystyle{
\boldsymbol{\Upsilon}_{\mu}^{\rm el} :=
\left[
\begin{array}{c}
\sigma_{\mu} \\
\mathbf{c}_{\mu}\\
\end{array}
\right],
}
&
\displaystyle{
\boldsymbol{\Upsilon}_{\mu}^{\rm ed} :=
(1 - \delta_{\rm in, \mu})
\left[
\begin{array}{c}
\mathbf{c}_{\mu} \cdot \mathbf{n} \\
0\\
\end{array}
\right] \, + \,
\frac{1 - \delta_{\partial \Omega}}{2}
\left[
\begin{array}{c}
0 \\
\tau_{\mu}\\
\end{array}
\right],
}
\\[3mm]
\displaystyle{
A^{\rm el}(w,v) \, = \,
\left[
\begin{array}{c}
 w \, v  \\
w \, \widehat{\nabla} v\\
\end{array}
\right],
}
&
\displaystyle{
A^{\rm ed}(w,v) \, = \,
\left[
\begin{array}{c}
 \{  w \}  \, v  \\
\left( \mathbf{n} \cdot \mathbf{J} w \right) \, v \\
\end{array}
\right],
}
\\
\end{array}
\end{equation}
where $\delta_{\partial \Omega}(\mathbf{x})=1 $ if $\mathbf{x} \in \partial \Omega$ and 
$\delta_{\partial \Omega}(\mathbf{x})=0 $ otherwise.
Exploiting this notation, we can rewrite \eqref{eq:weak_formulation_hyp} as
\eqref{eq:ADR_FEM_general}.
 Note that the associated  mapped problem is also of the form \eqref{eq:ADR_FEM_general}, provided that we substitute $\mathbf{c}_{\mu}, \sigma_{\mu}, f_{\mu}, z_{\rm D,\mu}$ with the corresponding definitions in \eqref{eq:mapped_coefficients_general}.

\section{Empirical Interpolation Method}
\label{sec:eim}
 
 \subsection{Review of the interpolation procedure for scalar fields}

We review the Empirical Interpolation Method  (EIM, \cite{barrault2004empirical}),  and we discuss  its extension to the approximation of vector-valued fields. Given 
the Hilbert space $\mathcal{W}$ defined over $\Omega$, 
the $Q$-dimensional linear space $\mathcal{W}_Q = {\rm span} \{ \psi_q \}_{q=1}^Q \subset \mathcal{W}$ and the points $\{  \mathbf{x}_q^{\rm i} \}_{m=1}^Q \subset \overline{\Omega}$, we define the interpolation operator  $\mathcal{I}_Q: \mathcal{W} \to \mathcal{W}_Q$ such that $\mathcal{I}_Q(v)(\mathbf{x}_q^{\rm i}) = v(\mathbf{x}_q^{\rm i})$ for $q=1,\ldots,Q$ for all $v \in \mathcal{W}$. Given the manifold $\mathcal{F} \subset \mathcal{W}$ and an integer $Q>0$, the objective of EIM is to determine an approximation space  $\mathcal{Z}_Q$ and $Q$
points $\{  \mathbf{x}_q^{\rm i} \}_{q=1}^Q$ such that $\mathcal{I}_Q(f)$ accurately approximates $f$ for all $f \in \mathcal{F}$.

Algorithm \ref{EIM} summarizes the EIM procedure as implemented in our code.
The algorithm takes as input snapshots of the manifold
$\{ f^k  \}_{k=1}^{n_{\rm train}} \subset \mathcal{F}$
and a tolerance $tol_{\rm eim}>0$, 
 and returns the functions $\{ \psi_q  \}_{q=1}^Q$,  the interpolation points $\{  \mathbf{x}_q^{\rm i} \}_{q=1}^Q$ and the matrix $\mathbf{B} \in \mathbb{R}^{Q, Q}$ such that
$\mathbf{B}_{q,q'} = \psi_q(\mathbf{x}_{q'}^{\rm i})$.
It is possible to show that the matrix $\mathbf{B}$ is lower-triangular: for this reason, online computations  can be performed in  $\mathcal{O}(Q^2)$ flops.
Note that in  \cite{barrault2004empirical} the authors resort to a strong Greedy procedure to generate $\mathcal{Z}_Q$, while here (as in several other works including \cite{chaturantabut2010nonlinear}) we resort to POD.
A thorough comparison between the two compression strategies is beyond the scope of the present work.

 \begin{algorithm}[H]                      
\caption{
Empirical Interpolation Method.}     
\label{EIM}                           


 \small
\begin{flushleft}
\begin{tabular}{| l | l |  }
\hline
\textbf{Inputs:} & $\{f^k \}_{k=1}^{n_{\rm train}}$, $tol_{\rm eim}$ \\[2mm]
\hline
\textbf{Outputs:}
&
$\{ \psi_q \}_{q=1}^Q, \mathbf{B} \in \mathbb{R}^{Q, Q}, \{ \mathbf{x}_m^{\rm i}  \}_{m=1}^M$
\\[2mm]
  \hline
\end{tabular}
\end{flushleft}  
 \normalsize 

\begin{algorithmic}[1]
 \State
 Build the POD space $\omega_1,\ldots,\omega_Q$ based on the snapshot set $\{f^k \}_{k=1}^{n_{\rm train}}$; $Q$ is chosen using \eqref{eq:POD_cardinality_selection} ($tol_{\rm pod}
 = tol_{\rm eim}$)
 \smallskip
  
 \State
 $\mathbf{x}_1^{\rm i} := {\rm arg} \max_{\mathbf{x} \in \overline{\Omega}} \, |\psi_1(\mathbf{x})|$,
 $\psi_1 :=   \frac{1}{\omega_1(\mathbf{x}_1^{\rm i})} \, \omega_1$, $\left(\mathbf{B} \right)_{1,1}= 1$
  \smallskip
 
\For{$q=2,\ldots,Q$}

 \State
$r_q = \omega_q- \mathcal{I}_{q-1} \omega_q$
  \smallskip

 \State
$\mathbf{x}_q^{\rm i} := {\rm arg} \max_{\mathbf{x} \in \overline{\Omega}} \, |r_q(\mathbf{x})|$,
$\psi_q = \frac{1}{r_q(\mathbf{x}_q^{\rm i}) } \, r_q$,
$\left(\mathbf{B} \right)_{q,q'}= \psi_q(\mathbf{x}_{q'}^{\rm i})$.
 
 \EndFor
   \smallskip

\end{algorithmic}
\end{algorithm}

\begin{remark}
\textbf{Oversampling.}
Several authors have proposed to consider non-interpolatory extensions of Algorithm \ref{EIM}: these algorithms generate a set of $Q_{\rm s}$ points $\{  \mathbf{x}_q^{\rm i} \}_{q=1}^{Q_{\rm s}}$, a basis 
$\{ \psi_q \}_{q=1}^Q$,
and the associated system $\mathbf{B}$,
 with $Q_{\rm s} = \mathfrak{o} Q$,
where $\mathfrak{o}>1$ is the oversampling ratio.
We refer to \cite{peherstorfer2018stabilizing} and the references therein for further details; see also  \cite[Algorithm 2]{maday2015parameterized} for a generalization to a broader class of "measurement" functionals .
\end{remark}

\subsection{Extension to vector-valued fields}
The EIM procedure can be extended to vector-valued fields.
We present below the non-interpolatory extension of EIM employed in this paper; the same approach has also been employed in \cite{taddei2018offline}.
We refer to \cite{tonn2011reduced,negri2015efficient} for two alternatives applicable to vector-valued fields.
Given the space $\mathcal{W}_Q = {\rm span} \{ \boldsymbol{\omega}_q \}_{q=1}^Q  \subset \mathcal{W}$ and 
the points $\{  \mathbf{x}_q^{\rm i} \}_{q=1}^Q \subset \overline{\Omega}$, 
we define the least-squares approximation operator $\boldsymbol{\mathcal{I}}_Q: \mathcal{W} \to \mathcal{W}_Q$ such that for all $\boldsymbol{v} \in \mathcal{W}$
$$
\boldsymbol{\mathcal{I}}_Q(\boldsymbol{v}) := {\rm arg} \min_{\boldsymbol{\omega} \in \mathcal{W}_Q} \, \sum_{q=1}^Q \, \| \boldsymbol{v}(\mathbf{x}_q^{\rm i}) - \boldsymbol{\omega}(\mathbf{x}_q^{\rm i})   \|_2^2.
$$
It is possible to show that $\boldsymbol{\mathcal{I}}_M$ is well-defined if and only if the matrix $\mathbf{B} \in \mathbb{R}^{QD, Q}$,
\begin{subequations}
\label{eq:EIM_vec}
\begin{equation}
\mathbf{B} = 
\left[
\begin{array}{ccc}
\boldsymbol{\omega}_1(\mathbf{x}_1^{\rm i}), & \ldots, & \boldsymbol{\omega}_Q(\mathbf{x}_1^{\rm i}) \\
& \vdots & \\
\boldsymbol{\omega}_1(\mathbf{x}_Q^{\rm i}), & \ldots, & \boldsymbol{\omega}_Q(\mathbf{x}_Q^{\rm i}) \\
\end{array}
\right]
\end{equation}
is full-rank. In this case, we find that   $\boldsymbol{\mathcal{I}}_Q$ can be efficiently computed as
\begin{equation}
\boldsymbol{\mathcal{I}}_Q(\boldsymbol{v}) = \sum_{q=1}^Q \,
\left(  \boldsymbol{\alpha}(\boldsymbol{v}) \right)_q \, \boldsymbol{\omega}_q,
\quad
\boldsymbol{\alpha}(\boldsymbol{v}) = 
\mathbf{B}^{\dagger}
\left[
\begin{array}{c}
\boldsymbol{v}(\mathbf{x}_1^{\rm i}) \\
\vdots \\
\boldsymbol{v}(\mathbf{x}_Q^{\rm i}) \\
 \end{array}
\right]
\end{equation}
for any $\boldsymbol{v} \in \mathcal{W}$, 
where $\mathbf{B}^{\dagger} = (\mathbf{B}^T \mathbf{B})^{-1} \mathbf{B}^T$ denotes the Moore-Penrose pseudo-inverse of $\mathbf{B}$. 
\end{subequations} 

Algorithm \ref{EIM_vec} summarizes the procedure employed to compute $\mathcal{W}_Q$, $\{ \mathbf{x}_q^{\rm i}  \}_{q=1}^Q$ and the matrix $\mathbf{B}$.
We observe that for scalar fields the procedure reduces to the one outlined in Algorithm \ref{EIM}. 
We further observe that online computational cost scales with $\mathcal{O}(D Q^2)$, provided that 
$\mathbf{B}^{\dagger}$ is computed offline.

 \begin{algorithm}[H]                      
\caption{
Empirical Interpolation Method for vector-valued fields.}     
\label{EIM_vec}                           


 \small
\begin{flushleft}
\begin{tabular}{| l | l |  }
\hline
\textbf{Inputs:} & $\{\boldsymbol{f}^k \}_{k=1}^{n_{\rm train}}$, $tol_{\rm eim}$ \\[2mm]
\hline
\textbf{Outputs:}
&
$\{ \boldsymbol{\omega}_q \}_{q=1}^Q, \mathbf{B}^{\dagger} \in \mathbb{R}^{Q, Q}, \{ \mathbf{x}_q^{\rm i}  \}_{q=1}^Q$
\\[2mm]
  \hline
\end{tabular}
\end{flushleft}  
 \normalsize 

\begin{algorithmic}[1]
 \State
 Build the POD space $\boldsymbol{\omega}_1,\ldots,\boldsymbol{\omega}_Q$ based on the snapshot set $\{ \boldsymbol{f}^k \}_{k=1}^{n_{\rm train}}$;
$Q$ is chosen using \eqref{eq:POD_cardinality_selection} ($tol_{\rm pod}  = tol_{\rm eim}$)
 \smallskip
  
 \State
 Set 
 $\mathbf{x}_1^{\rm i} := {\rm arg} \max_{\mathbf{x} \in \overline{\Omega}} \, \|\omega_1(\mathbf{x}) \|_2$,
and  $\mathbf{B}_{Q=1}$ using \eqref{eq:EIM_vec}.
  \smallskip
 
\For{$q=2,\ldots,Q$}

\State
$\boldsymbol{r}_q = \boldsymbol{\omega}_q - \boldsymbol{\mathcal{I}}_{q-1} \boldsymbol{\omega}_q$
  \smallskip

\State
Set 
$\mathbf{x}_q^{\rm i} := {\rm arg} \max_{\mathbf{x} \in \overline{\Omega}} \, \|  \boldsymbol{r}_q(\mathbf{x}) \|_2$, and update
 $\mathbf{B}_{Q=q}$ using \eqref{eq:EIM_vec}. 
 \EndFor
    \smallskip

\State
Compute $\mathbf{B}^{\dagger} = (\mathbf{B}^T \mathbf{B})^{-1} \mathbf{B}$.
\end{algorithmic}

\end{algorithm}

\section{Radial Basis Function maps for geometry reduction}
\label{sec:RBF_laplace}

\subsection{RBF formulation}

We illustrate how to apply RBF approximations to build the mapping $\boldsymbol{\Phi}_{\mu}$ for a given $\mu \in \mathcal{P}$. Given an even integer $N_{\rm bnd}>0$, the parameterization of $\partial \Omega_{\rm in}$ 
 $\boldsymbol{\gamma}_{\rm in}:[0,1) \to \partial \Omega_{\rm in}$,
$\boldsymbol{\gamma}_{\rm in}(t) = [\cos(2 \pi t), \sin(2 \pi t) ]^T$, and the parameterization  of $\partial \Omega_{\rm box}$
$\boldsymbol{\gamma}_{\rm box}:[0,1) \to \partial \Omega_{\rm box}$, we define the control points
$$
\{ \mathbf{X}_i  \}_{i=1}^{N_{\rm bnd}} \, := \, 
\left\{
\boldsymbol{\gamma}_{\rm in}(t^1),\ldots,
\boldsymbol{\gamma}_{\rm in}(t^{N_{\rm bnd}/2}),
\boldsymbol{\gamma}_{\rm box}(t^1),\ldots,
\boldsymbol{\gamma}_{\rm box}(t^{N_{\rm bnd}/2})
\right\},
$$
and the displaced control points 
$$
\{ \mathbf{x}_i^{\mu}  \}_{i=1}^{N_{\rm bnd}} \, := \, 
\left\{
\boldsymbol{\gamma}_{\rm in,\mu}(2 \pi t^1),\ldots,
\boldsymbol{\gamma}_{\rm in,\mu}(2 \pi t^{N_{\rm bnd}/2}),
\boldsymbol{\gamma}_{\rm box}(t^1),\ldots,
\boldsymbol{\gamma}_{\rm box}(t^{N_{\rm bnd}/2})
\right\},
$$
where $0 \leq t^1,\ldots \leq t^{N_{\rm bnd}/2} < 1$. Then, we introduce the  kernel $\phi: \mathbb{R}^+ \to \mathbb{R}$, where $\lambda>0$ is the Kernel width, and the space $\mathbb{P}^1$ of linear functions from $\mathbb{R}^2$ to $\mathbb{R}^2$. Finally, we define the mapping 
$\boldsymbol{\Phi}_{\mu} := \mathbf{p}_{\mu} + \boldsymbol{\omega}_{\mu}$ 
where $(\mathbf{p}_{\mu} , \boldsymbol{\omega}_{\mu})$ is the solution to  the optimization problem
\begin{equation}
\label{eq:RBF_mapping}
\min_{( \mathbf{p}, \boldsymbol{\omega}) \in \mathbb{P}^1 \times \mathcal{N}_{\phi}  }  \, \xi \|  \boldsymbol{\omega}  \|_{ \mathcal{N}_{\phi}   }^2 \, + \,
\sum_{i=1}^{N_{\rm bnd}} \, \big\|  
\mathbf{p}(\mathbf{X}_i) \,  + \,     
\boldsymbol{\omega}(\mathbf{X}_i)
\,  - \, \mathbf{x}_i^{\mu}  \big\|_2^2.
\end{equation}
Here, $\xi>0$ is a regularization parameter, while $\mathcal{N}_{\phi}$  is the native Hilbert space associated with the kernel $\phi$.
We anticipate that in the numerical results we consider the 
Gaussian kernel   with kernel width $\lambda>0$, 
$\phi(r) = e^{-\lambda r^2}$ and we resort to hold-out ($80\% - 20\%$) validation to choose $\lambda$ and $\xi$. 
It is possible to show that the optimal $\boldsymbol{\omega}_{\mu}$ is of the form 
$\boldsymbol{\omega}_{\mu}(\mathbf{X}) = \sum_{i=1}^{N_{\rm bnd}} \mathbf{a}_i \, \phi \left(  \| \mathbf{X} - \mathbf{X}_i  \|_2 \right)$; as a result, solutions to \eqref{eq:RBF_mapping} involve the solution to a  linear system  of size $d (N_{\rm bnd} + 1) + d^2$.
Note that in \cite{manzoni2012model} the authors consider the pure interpolation problem, which corresponds to taking the limit $\xi \to 0^+$ in \eqref{eq:RBF_mapping} (see \cite[Proposition 2.10]{taddei2017adaptive}).

We observe that $\boldsymbol{\Phi}_{\mu}$ defined in \eqref{eq:RBF_mapping} is not guaranteed to be bijective for large deformations: this issue is shared by many approaches referenced in the introduction\footnote{
To address this issue, in the related framework of mesh deformation, several authors (see, e.g., \cite{froehle2015nonlinear}) have proposed to resort to nonlinear elasticity extensions: clearly, resorting to a nonlinear extension increases the overall computational cost.
}. Furthermore, computational cost scales with $\mathcal{O}(N_{\rm bnd}^3)$: as $N_{\rm bnd}$ increases, the computational overhead associated with the mapping process might be the dominant online cost.
On the other hand,
for the approach presented in this paper 
(i) the size of the expansion does not depend on the number of control points, and 
(ii) the mapping is guaranteed to be globally invertible for all $\mu$ in the training set $\{ \mu^k \}_{k=1}^{n_{\rm train}} \subset \mathcal{P}$.

\subsection{Numerical results}

In Figure \ref{fig:RBF}, we compare performance of the RBF map obtained solving \eqref{eq:RBF_mapping} with performance of the map obtained solving \eqref{eq:optimization_statement} for several values of $M_{\rm hf}$, and for a given $\mu \in \mathcal{P}$. 
Figure \ref{fig:RBF}(a) shows  the behavior of the out-of-sample error $E^{\rm geo,out}$ (cf. \eqref{eq:Egeo}) with respect to the number of control points $N_{\rm bnd}$; Figure \ref{fig:RBF}(b) shows the behavior of the minimum Jacobian determinant over $\Omega$ with $N_{\rm bnd}$.
Control points for RBF are chosen on both boundaries $\partial \Omega_{\rm box}, \partial \Omega_{\rm in}$ as described above; on the other hand, in our procedure,
in  \eqref{eq:optimization_statement}, 
 we consider  $N_{\rm bnd}=10^3$ points on $\partial \Omega_{\rm in}$. We choose $\epsilon=0.1$ for all tests, while we observe that higher values of $\xi$ should be considered for small $M_{\rm hf}$ to avoid overfitting: here, we set $\xi=10^{-1}$ for $M_{\rm hf} \geq 2 \cdot 4^2$ and 
$\xi=10$ for $M_{\rm hf} <  2 \cdot 4^2$.
We observe that convergence of the  RBF mapping is relatively slow: this is due to the lack of regularity of $\partial \Omega_{\rm box}$. We further observe that for small values of $M_{\rm hf}=N_{\rm bnd}$ the RBF mapping might be singular. On the other hand, our approach is guaranteed to lead to bijective maps for all choices of $M_{\rm hf}$.

\begin{figure}[h!]
\centering
 \subfloat[] {\includegraphics[width=0.45\textwidth]
 {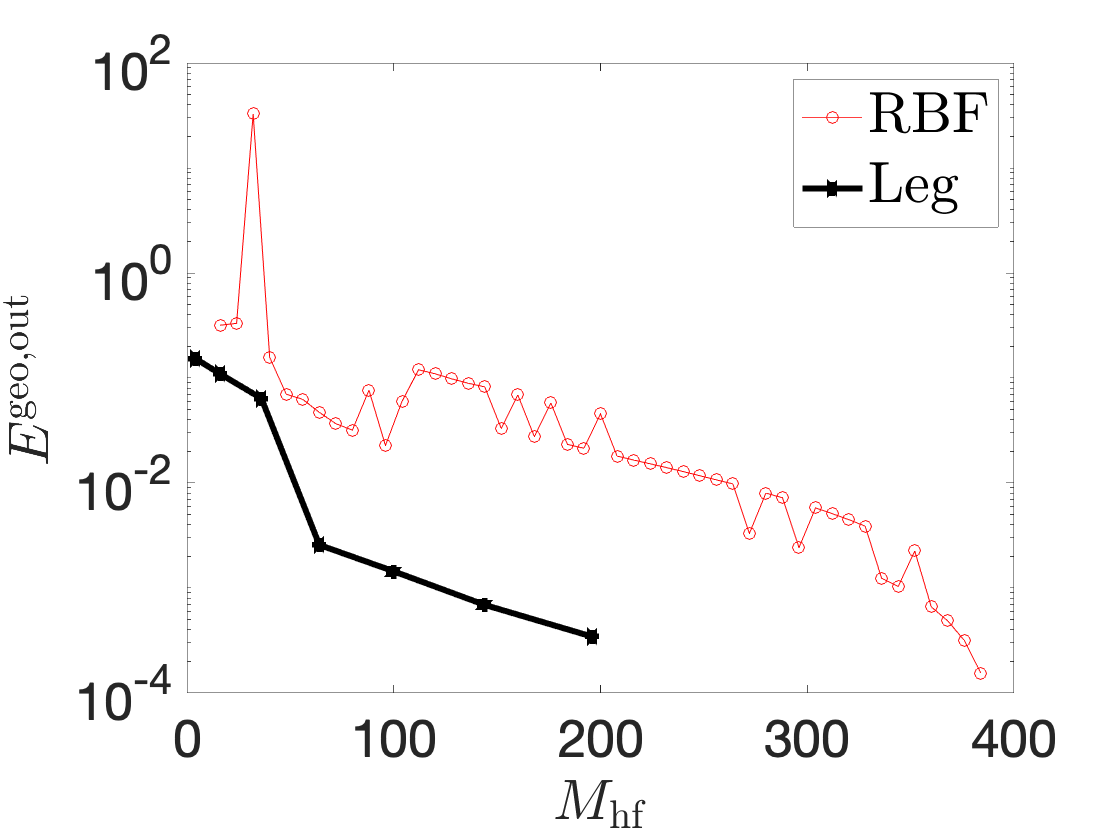}}  
    ~ ~
\subfloat[] {\includegraphics[width=0.45\textwidth]
 {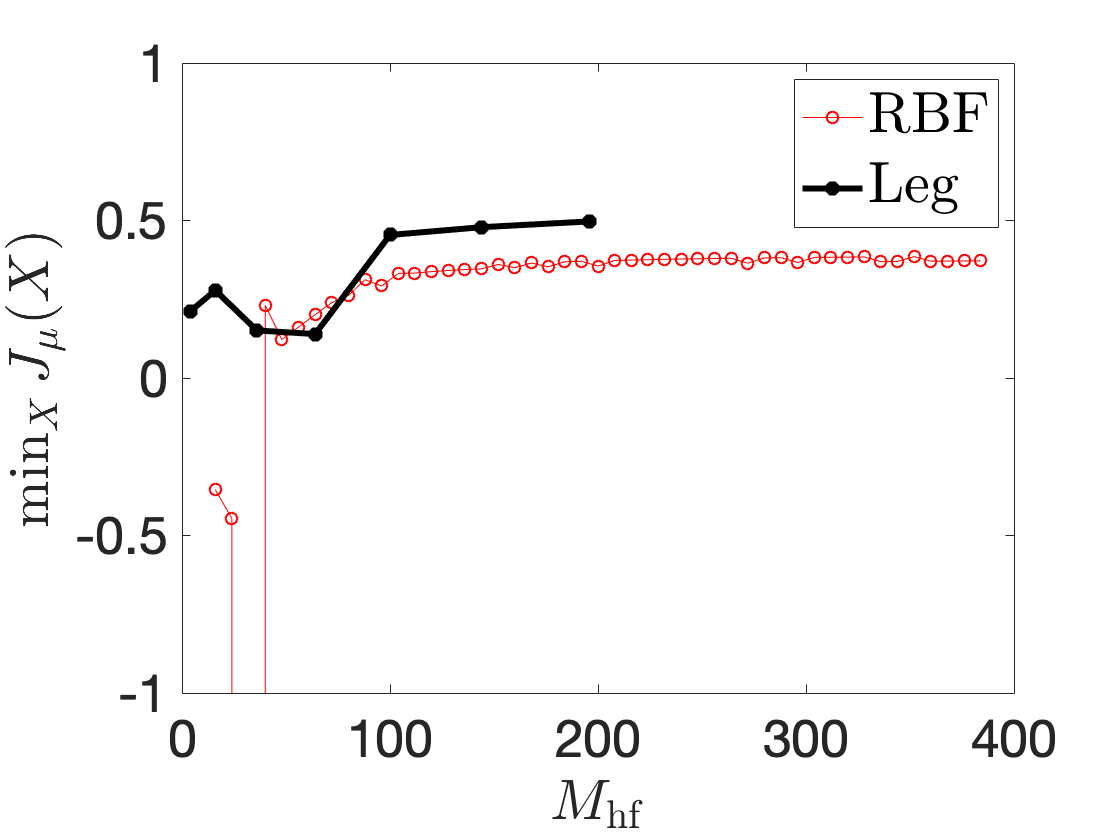}}  
 
\caption{Laplace's equation in parameterized domain; performance of RBF  and Legendre-based  mappings for $\mu = [0.1826, 0.2918, 0.4900] \in \mathcal{P}$.  
(a): behavior of $E^{\rm geo,out}$ \eqref{eq:Egeo} with   $M_{\rm hf}$. 
(b): behavior of $\min_{\mathbf{X}} \mathfrak{J}_{\mu}(\mathbf{X})$ with   $M_{\rm hf}$.}
 \label{fig:RBF}
\end{figure}

 \bibliographystyle{plain}
 \small

 \bibliography{all_refs}
 
\end{document}